\DeclareMathOperator{\diver}{div}
\newcommand{\rr}{\ensuremath{\mathbb{R}}}
\newcommand{\ep}{\ensuremath{\varepsilon}}
\newcommand{\bdry}{\ensuremath{\partial}}
\newcommand{\balpha}{\ensuremath{\bar{\alpha}}}
\newtheorem{thm}{Theorem}[section]
\newtheorem{prop}[thm]{Proposition}
\newtheorem{lem}[thm]{Lemma}
\theoremstyle{definition}
\newtheorem{defn}[thm]{Definition}
\theoremstyle{remark}
\newtheorem{rem}[thm]{Remark}
\numberwithin{equation}{section}
\begin{document}
\title{Uniform convergence for the incompressible limit of a tumor growth model}
\date{\today}
\author{Inwon Kim}
\address{Department of Mathematics, UCLA, Los Angeles, CA, 90095}
\thanks{Inwon Kim is supported by NSF grant DMS-1566578.}
\email{ikim@math.ucla.edu}
\author{Olga Turanova}
\address{Department of Mathematics, UCLA, Los Angeles, CA, 90095}
\thanks{Olga Turanova is supported by NSF grant DMS-1502253.}
\email{turanova@math.ucla.edu}
\begin{abstract}
We study a  model introduced by Perthame and Vauchelet \cite{PV} that describes the growth of a tumor governed by Brinkman's Law, which takes into account friction between the tumor cells. We adopt the viscosity solution approach to establish an optimal uniform convergence result of the tumor density as well as the pressure in the incompressible limit. The system lacks standard maximum principle, and thus modification of the usual approach is necessary.

\end{abstract}
\maketitle

\section{Introduction}

We study the following model, which was introduced by Perthame and Vauchelet in \cite{PV}. It describes the growth of tumors at the cellular level by providing a law relating the cell density, pressure, and cell multiplication. The tumor cell density $n_k: \rr^n\times [0,\infty) \to \rr$ satisfies,
\begin{equation}
\label{eq:density}
\begin{cases}
\partial_t n_k - \diver(n_k DW_k)=n_k G(p_k),\\
-\nu \Delta W_k+W_k = p_k,
\end{cases}
\end{equation}
where the pressure $p_k$ is given by,
\[
p_k=\frac{k}{k-1}(n_k)^{k-1}.
\]
Here $\nu$ is a positive constant and $G$ is a given function that describes the effect that the pressure has on the growth of the tumor. We assume $G$ satisfies,
\begin{equation}
\label{G basic}
G\in C^1(\rr),\text{ }  G'(\cdot)\leq -\balpha<0, \text{ and }G(P_M)=0 \text{ for some } P_M>0 \text{ and }\bar{\alpha}>0.
\end{equation}

The  main results of \cite{PV} concern the limit as $k\rightarrow\infty$, or the so-called incompressible limit, of (\ref{eq:density}). This connects  (\ref{eq:density}) to a system that involves a moving front. If the parameter $\nu$ were zero (in other words, if the tumor were governed by Darcy's Law), then the system (\ref{eq:density}) would become,
\[
\partial_t n_k - \diver(n_k Dp_k)=n_k G(p_k).
\]
This model for tumor growth has been widely studied, and we refer the reader to the introduction of \cite{PV} for a variety of references, both about modeling and rigorous mathematical analysis. In particular, in  \cite{PQV}, Perthame, Quir\`os and V\'azquez find that  the incompressible limit of the above equation is the Hele-Shaw problem with a forcing term.  Kim and Pozar \cite{KP} used viscosity solution methods to improve the  result in \cite{PQV}.  The model that we study, (\ref{eq:density}) with $\nu>0$, has been proposed as a better description of tumor growth. Here, the  tumor  is governed by Brinkman's Law, which takes into account the friction between the tumor cells, and not just of the tumor with its environment.  These modeling issues are discussed in, for example, \cite{ZWC, BCGRS}.

Of particular interest in the asymptotic limit is the limiting pressure, which represents the incompressibility condition. In the  inviscid model ($\nu=0$), the limiting pressure solves a Hele-Shaw type problem and is continuous as long as the pressure zone is reasonably regular \cite{PQV}. However, as illustrated in \cite{PV}, in the viscous model that we study here the limiting pressure is strictly positive on the boundary of its support, and thus is  discontinuous. This is an interesting contrast to the inviscid model.

Our goal in this paper is to obtain   pointwise convergence results  in the framework of viscosity solutions theory, improving the $L^1$ convergence  obtained in \cite{PV}. Due to the discontinuity of the limiting pressure, the optimal pointwise convergence result one expects is uniform convergence  away from the pressure boundary. This is precisely what we obtain. In addition, knowing that the pressure converges uniformly then allows us to  improve the convergence of the $W_k$ as well (see Theorem~\ref{main result} below).

We point out that the system (1.1) does not enjoy the comparison principle -- in fact, it is strongly coupled -- and thus one needs to modify the existing theory in the analysis.  To achieve this we follow the approach in \cite{Kim}, where we rely on the fact that one component of the  system can be considered almost fixed due to its strong convergence: in our case that turns out to be the $W_k$, though their convergence is still weaker than what is available in \cite{Kim}.

\subsection*{Heuristics}
Let us briefly recall the formal derivation of the limiting system given in \cite{PV} to illustrate additional challenges and main ingredients of our analysis in more detail. We denote the limit of $(p_k, n_k, W_k)$ by $(p_\infty, n_\infty, W_\infty)$. 
Perhaps the easiest equation to guess is the one for $W_{\infty}$:
 \begin{equation}\label{elliptic}
 -\nu\Delta W_{\infty} + W_{\infty} = p_{\infty}.
 \end{equation}
 Next we expect that $p_{\infty}$ is either zero or satisfies $p_{\infty} -\nu G(p_{\infty}) = W_{\infty}$. This is because we can write the $n_k$ equation in terms of $p_k$ as 
 \begin{equation}\label{transport_k_pressure}
 \partial_t p_k - Dp_k\cdot DW_k = (k-1)\nu^{-1} p_k( W_k-(Id-\nu G)(p_k)),
 \end{equation}
 which then translates $p_{\infty}$ as a singular limit of reaction-diffusion equations. Thus it is reasonable to think that $p_\infty$ will take value either zero or $(Id-\nu G)^{-1}(W_\infty)$. In other words, we expect to have $p_\infty = (Id-\nu G)^{-1}(W_\infty)\chi_{\Omega_t}$ for some region $\Omega_t$. The question now is to characterize $\Omega_t$.

We recall that there is a third component here, namely $n_k$.   
  Manipulating the equation for $n_k$ and then using the equation that $W_k$ satisfies yields,
 \begin{align}
 \label{eq:nk}
 \partial_tn_k - Dn_k\cdot DW_k &=n_k(\Delta W_k+G(p_k))=\frac{n_k}{\nu}(W_k-p_k+\nu G(p_k)).
 \end{align}
The region $\Omega_t$ is where the $p_k$ converge to the positive value $(Id-\nu G)^{-1} (W_{\infty})$, so by definition we know that the $n_k$ converge to $1$ there. 
When the $p_k$ converge to 0 (in other words, on $\Omega_t^c$) we expect the $n_k$ to converge to zero if initially this is the case (see the discussion in the outline below). 
Notice that in both situations, the right-hand side of the previous equation is zero. Thus we expect $n_\infty$ to equal $\chi_{\Omega_t}$ and solve,
 \begin{equation}\label{transport}
 \partial_t n_{\infty} - Dn_{\infty} \cdot DW_{\infty} = 0,
 \end{equation}
 yielding the normal velocity law for the set $\Omega_t$. Thus, we expect the triple $(p_\infty, n_\infty, W_\infty)$ to solve the system,
\begin{equation}
\label{system}
\begin{cases}
-\nu\Delta W_{\infty}  + W_{\infty} = p_{\infty},\\ 
p_{\infty} = (Id-\nu G)^{-1}(W_{\infty}) \chi_{\{n_{\infty}>0\}}, \quad n_{\infty} = \chi_{\{n_{\infty}>0\}},\\
\partial_t n_{\infty} - Dn_{\infty} \cdot DW_{\infty} = 0.\\

\end{cases}
\end{equation}

  The above heuristics are indeed true when the limiting density $n_{\infty}$ is initially a patch. Then it follows from the transport equation (\ref{transport}) that $n_{\infty}$ satisfies that $n_{\infty}$ is always zero or one at later times. In general the limiting system \eqref{system} is invalid with the presence of the region $\{0<n_{\infty}<1\}$, due to the interaction of the two convergence regions as $k\to\infty$. In the inviscid model ($\nu=0$), this was studied by \cite{KP} and Mellet, Perthame and Quir\`os \cite{MPQ}. In our situation the normal velocity of the pressure zone in this general setting remains open.
\subsection*{Initial data}
Let us now state the conditions on the limiting initial data with the notation $H:= (Id-\nu G)^{-1}$, as given in \cite{PV}. We assume,
  \begin{equation}\label{limit_initial}
 n_{\infty} = \chi_{\Omega_0},  \quad p_{\infty}^0 = H(W_{\infty}^0)\chi_{\Omega_0},   \quad -\nu\Delta W_{\infty} + W_{\infty} =  p_{\infty}^0,
 \end{equation}
where $\Omega_0\subset \rr^n$ is a compact set with measure zero boundary. The last two equations are, as mentioned in \cite{PV}, to avoid initial layers in the limit system.  As for the approximating system, we impose 
 \begin{equation}\label{initial}
\liminf{}_* p_k(x,0) >0 \hbox{ on } \Omega_0\hbox{ and } \liminf_{k\rightarrow\infty} dist\left(\{x|p_k(x,0)>0\}, (\bar{\Omega_0})^c\right)>0,
 \end{equation}
where $dist$ is the usual distance function.  
The assumptions (\ref{initial}) that we make on the initial data are very similar to those in \cite{PV}, but they are neither more nor less general. We discuss this further below.

\subsection*{Main result}
Now we are ready to state our main result.

 \begin{thm}
\label{main result}
Let $\Omega_0$ be a compact set in $\rr^n$ and let $n_k$ and $W_k$ solve (\ref{eq:density}) with initial data satisfying (\ref{initial}). Then, along a subsequence: 
 \begin{itemize}
 \item[(a)] the $W_k$ converge strongly to $W_\infty$ in $L^{\infty}((0,T), W^{2,p}_{loc}(\rr^n))$,
 \item[(b)] the $p_k$ converge locally uniformly to $p_{\infty}$ on $(\rr^n\times (0,\infty))\setminus \bdry\{n_{\infty}>0\}$,
 \item[(c)] the $n_k$ converge locally uniformly to $n_\infty$ on $(\rr^n\times (0,\infty))\setminus \bdry\{n_{\infty}>0\}$,
\end{itemize}
where $(p_{\infty}, n_{\infty}, W_{\infty})$ solve (\ref{system}) with initial data (\ref{limit_initial}). Moreover, $\bdry\{n_{\infty}>0\}$ has measure zero.
 \end{thm}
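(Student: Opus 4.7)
The plan is to start from the $L^1$ compactness of $(p_k, n_k)$ already furnished by \cite{PV}, extract a subsequence with $p_k\to p_\infty$ and $n_k\to n_\infty$ in $L^1_{loc}$, and then prove the three pointwise statements in order. Part (a) is essentially an elliptic bootstrap: since $\|p_k\|_{L^\infty}\leq P_M$ by the structural hypothesis (\ref{G basic}) on $G$, the equation $-\nu\Delta W_k + W_k = p_k$ gives a uniform $L^\infty_t(W^{2,q}_{x,loc})$ bound for every $q<\infty$, and the $L^1$ convergence of $p_k$ paired with uniqueness of the elliptic problem upgrades this to strong convergence $W_k\to W_\infty$ in the same space. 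By Sobolev embedding, $DW_k\to DW_\infty$ locally uniformly. This is the ``almost fixed drift'' that unlocks the viscosity machinery, as in \cite{Kim}.

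For (b) I would rewrite the pressure equation in the transport-reaction form (\ref{transport_k_pressure}). Because $DW_k$ is now locally smooth and strongly convergent, we can freeze it at its limit when constructing barriers, treating it as given data and so bypassing the failure of the system's comparison principle. The stiff reaction $(k-1)\nu^{-1} p_k(W_k - (Id-\nu G)(p_k))$ acts as a two-well selection mechanism that forces $p_\infty$ to be either $0$ or $H(W_\infty)$ in the limit. Concretely, for a point $(x_0,t_0)$ at positive distance $d$ from $\bdry\{n_\infty>0\}$, I would build a supersolution of the form $p^+ = (H(W_\infty)+\ep)\chi_{\Omega^+_t}$, where $\Omega_t^+$ is an enlargement of $\{n_\infty>0\}_t$ transported with drift $-DW_\infty$ plus an outward $\ep$-margin; dually, a subsolution $p^-$ shrinks the positivity set by $\ep$. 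A viscosity comparison argument applied to the scalar equation satisfied by $p_k$ (with drift $DW_k$ treated as a fixed smooth function) sandwiches $p_k$ between $p^-$ and $p^+$ in a neighborhood of $(x_0,t_0)$ for all large $k$, and sending $\ep\to 0$ yields local uniform convergence. Crucially, the finite-speed-of-propagation feature of these barriers simultaneously identifies $\Omega_t=\{n_\infty>0\}$ as the incompressible region and shows that its boundary is transported by the Lipschitz flow of $-DW_\infty$ from $\bdry\Omega_0$, which has measure zero by assumption.

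For (c), on compact subsets of $\{n_\infty>0\}^\circ$ the local uniform positivity of $p_k$ just established, together with the identity $n_k=((k-1)p_k/k)^{1/(k-1)}$, forces $n_k\to 1$ uniformly. On compact subsets of the interior of the complement, $p_k\to 0$ uniformly; one then uses the transport equation (\ref{eq:nk}) along characteristics of $-DW_k$ starting from the initial set where, by (\ref{initial}), $n_k(\cdot,0)$ is already negligible, with the right-hand side $\frac{n_k}{\nu}(W_k-p_k+\nu G(p_k))$ providing controlled linear decay since $W_k-p_k+\nu G(p_k)$ is uniformly bounded there. The main obstacle is the coupling: one cannot decouple the $W_k$ equation from $p_k$ by a single \emph{a priori} argument, so the barrier construction, the shrinking/expansion of $\Omega_t^\pm$, and the elliptic upgrade of $W_k$ must be threaded together so that the $\ep$-barriers remain strict sub/supersolutions of the scalar pressure equation uniformly in $k$. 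This is precisely the iterative ``strong-convergence of one component'' trick of \cite{Kim}, here made delicate by the fact that $p_\infty$ is genuinely discontinuous across $\bdry\{n_\infty>0\}$ — which is why the exclusion of that boundary in the uniform convergence statement is both necessary and sharp.
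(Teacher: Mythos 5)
Your outline captures the broad strategy — treat the drift $DW_k$ as the ``almost fixed'' component, construct barriers that exploit the stiffness of the reaction $k\,p_k(W_k-(Id-\nu G)(p_k))$, and identify the limit support with a transported set — but it contains a critical gap at the very first step that would cause the whole chain to break.

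You claim that the a priori $L^\infty$ bound on $p_k$ together with its $L^1_{loc}$ convergence ``upgrades'' to strong convergence $W_k\to W_\infty$ in $L^\infty_t(W^{2,q}_{x,loc})$, and hence by Sobolev embedding to \emph{locally uniform} convergence $DW_k\to DW_\infty$. This is not available. The $L^1$ convergence of $p_k$ in space--time only controls $\|p_k(\cdot,t)-p_\infty(\cdot,t)\|_{L^p}$ on average in $t$, not uniformly; you would get at best $L^q_t(W^{2,p}_{x,loc})$ for $q<\infty$. More importantly, the only convergence of $DW_k$ that the estimates of Perthame--Vauchelet provide is $DW_k\to DW_\infty$ in $L^1((0,T),L^\infty_{loc})$, and $DW_\infty$ itself is merely log-Lipschitz in $x$ and $L^1$ in $t$. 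This weaker regularity is not a technical annoyance to be smoothed over: it is precisely why the paper has to \emph{define} a nonstandard notion of viscosity solution (following Ishii) for drifts that are only $L^1$ in time and prove comparison and existence from scratch in Appendices A and B, and it is why the heuristics with ``frozen smooth drift'' barriers that you invoke cannot be applied directly. Your argument is also circular in its ordering: part (a) as you set it up presupposes good convergence of $p_k$, which is exactly what parts (b) and (c) are meant to deliver. The paper proves (b) and (c) first and only then deduces (a) by splitting the elliptic estimate into regions where $p_k$ is known to converge uniformly and a small exceptional set controlled by Lemma \ref{lem:Gammat}.

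A second, smaller but genuine, error: you assert that $\bdry\{n_\infty>0\}$ has measure zero because it is transported from $\bdry\Omega_0$ by a \emph{Lipschitz} flow of $-DW_\infty$. Since $DW_\infty$ is only log-Lipschitz, the flow map $\Phi_t$ is only H\"older continuous with exponent $\exp(-Nt)$ (Theorem \ref{thm:u and ODE}), not Lipschitz. The measure-zero statement still holds, but it requires the separate argument of Lemma \ref{lem:Gammat}, and the Hausdorff dimension of $\Gamma_t$ may in fact increase in time (Remark \ref{rem:dim Gamma}). Finally, the fact that the indicator of the transported set is the \emph{unique} (up to envelopes) discontinuous viscosity solution of the transport equation is itself subtle and requires the nucleation analysis of Theorem \ref{thm:discontinuous}; this cannot be taken for granted for a merely log-Lipschitz velocity.
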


As stated in the theorem, the limiting density  has its support evolving by the geometric flow (\ref{transport}). Since our goal here  is to obtain the converge of the density and pressure in a strong sense -- namely, locally uniformly -- we therefore need to employ a sufficiently strong notion of solution for (\ref{transport}). For this reason we consider viscosity solutions to (\ref{transport}). This  allows us to use barrier arguments with smooth test functions, as well as stability properties, to yield the (locally) uniform convergence results that we desire. Since a priori estimates only yield $DW_\infty$ to be integrable in time and log-Lipschitz in space,  (\ref{transport}) is not covered by standard viscosity solutions theory. Thus, a key part of our work is to define a notion of viscosity solution for (\ref{transport}); establish basic results such as stability, existence and a comparison theorem; and describe the unique viscosity solution of (\ref{transport}) in terms of the associated flow map. In fact, in the proof of the main result we identify $n_\infty$ with the function given by (\ref{formula}) of Theorem \ref{thm:discontinuous}, with $V=-DW_\infty$.

Before we discuss the main ingredients of the proof in more detail, some remarks on the theorem are in order.
\subsubsection*{Size of $ \bdry\{n_\infty>0\}$}
Theorem \ref{main result} tells us the limiting behavior of the  pressure and density everywhere except on  $\bdry\{n_\infty>0\}$. In Lemma \ref{lem:Gammat} we show that $ \bdry\{n_\infty>0\}$ has Lebesgue measure zero for all times. 
If $DW_\infty$ were Lipschitz in space, then the flow generated by $DW_\infty$ would also be Lipschitz, and, for example, the Hausdorff dimension of $\bdry\{n_\infty>0\}$ would be preserved under the flow. This is not quite the case for us: see Section \ref{sec:further} for more discussion.

\subsubsection*{Relationship of our work and \cite{PV}} Our results strengthen those of \cite{PV} in several ways. First, we obtain locally uniform convergence of $p_k$ and $n_k$, improving the $L^1_{loc}$ convergence in \cite{PV}. Second, we  characterize $n_\infty$ as an indicator function. This confirms what was suggested   in the numerical examples in \cite[Section 3]{PV}, but was not proven there. Third, as a consequence of the stronger convergence of the $p_k$, we improve the convergence of the $W_k$ from $L^1((0,T), W^{1,q}_{loc}(\rr^n))$ to $L^{\infty}((0,T), W^{2,p}_{loc}(\rr^n))$.

We now discuss in detail the relationship between our assumption on the initial data (\ref{initial}) and the analogous assumption in \cite{PV}. Indeed, \cite{PV} assumes that $p_k(x,0)$ converge almost everywhere to $H(W_\infty)$ on $\Omega_0$, and are identically zero on $\Omega_0^c$. These assumptions of \cite{PV} do not imply that (\ref{initial}) holds, because convergence in the almost everywhere sense is weaker than what is needed for (\ref{initial}) to hold. On the other hand, our assumptions do not imply that those in \cite{PV} hold. We do not need to assume convergence of the $p_k(x,0)$ to $H(W_\infty)$ on $\Omega_0$; for us it is enough only to assume that the $p_k$ are uniformly positive there. In addition, we do not require all of the $p_k(x,0)$ to have the same zero set; we simply require a convergence of the zero sets. Thus, our assumptions are neither stronger nor weaker than those in \cite{PV}.

\subsubsection*{The initial time}We discuss the behavior of the $p_k$ near the initial time. First, we point out a difference between the assumptions on the initial data for the limiting system (\ref{limit_initial}) and for the system at the $k$-level (\ref{initial}) -- the second condition in  (\ref{limit_initial}) states $p_{\infty}^0 = H(W_{\infty}^0)\chi_{\Omega_0}$; however, at the $k$-level we assume only $\liminf{}_* p_k(x,0) >0$ on $\Omega_0$. Despite this, we are able to establish that the $p_k$ converge locally uniformly to $p_\infty$ for $t>0$ (and off of $\bdry \{n_\infty>0\}$).

To see why this should be the case, we look at the equation that $p_k$ satisfies, (\ref{transport_k_pressure}), and explain the heuristics. We see that if $p_k$ is even a little bit positive initially, it will approach the stable root of the reaction term  for positive times as $k$ approaches infinity. This causes a possible ``jump" at time 0: indeed, it is even possible for the initial data $p_k(x,0)$ to  converge to something other than $p_{\infty}^0$, and yet for the $p_k$ to still converge to $p_\infty$ for $t>0$.

\subsection*{Main challenges and ingredients}

As mentioned above, our goal here  is to obtain the convergence of the $p_k$ in a strong sense -- namely, locally uniformly, using the viscosity solution approach. We illustrate the main ingredients and challenges of the proof below.

\subsubsection*{Viscosity solutions for the transport equation} 

A key part of our work is defining and establishing basic properties for viscosity solutions of (\ref{transport}).  We prove a comparison result, Theorem \ref{thm:comparison}, that is essential to the rest of the paper, and implies that solutions to (\ref{transport}) with continuous initial data are unique. In addition, we also establish uniqueness for solutions to (\ref{transport}) that have a characteristic function as initial data (see Theorem \ref{thm:discontinuous}). This is an interesting and subtle point -- in general, it is possible for a Hamilton Jacobi equation $u_t+H(x,t,Du)=0$ to enjoy uniqueness for continuous solutions, but not discontinuous solutions (see the counterexample of Barles, Soner and Souganidis in \cite[Proposition 4.4]{BSS} where non-uniqueness occurs for $u_t +(x-t)|Du|=0$ due to nucleation).

\subsubsection*{Literature on the transport equation} 
There is a wide literature on renormalized solutions (in the sense of DiPerna-Lions \cite{DL}) and distributional solutions to the transport equation with quite general vector fields. In particular, Ambrosio's \cite[Theorem 4.1]{Ambrosio} establishes uniqueness of distributional solutions to
\[
u_t+Du\cdot b(x,t)=0
\]
where  $b(\cdot,t)\in BV_{loc}(\rr^n)$ for almost all $t$ and satisfies $\diver(b)\in L^1((0,T),L^{\infty}_{loc}(\rr^n))$. Our vector field $DW_\infty$ satisfies these hypotheses. However, the aforementioned result concerns solutions in the distributional sense. We do not know whether or not a viscosity solution is a distributional solution, and thus cannot immediately deduce uniqueness or comparison for our situation, so we establish comparison for viscosity solutions of (\ref{transport}) directly.

\subsubsection*{Generalized set evolution}

Understanding the evolution of the set $ \bdry\{n_\infty>0\}$ is key to finding the asymptotic behavior of our system. The heuristics indicate that this region travels with normal velocity $DW_\infty$. We want a precise and direct way to describe and study such evolution. For this, we extend the definition given by Barles and Souganidis in \cite{BS} of generalized flow to velocities that are only integrable in time. Heuristically, the definition involves testing a subset of $\rr^n$ from the ``inside" or ``outside" by smoothly evolving sets. Whether a region $\Omega_t$ is a generalized flow with velocity $DW_\infty$ is closely related to whether the indicator of $\Omega_t$ is a viscosity solution of (\ref{transport}) (this is made precise in Theorem \ref{thm:soln implies flow}). 

In \cite{BS}, the authors also introduce a way of studying the development of interfaces in asymptotic limits of reaction-diffusion equations. In \cite{Kim}, such methods were used to study a system with no comparison principle. Although the methods of \cite{BS} and \cite{Kim} provided a lot of inspiration for our work, we do not use the so-called ``abstract approach" introduced in \cite{BS}, and are able to proceed  with more basic barrier arguments. This is mostly due to the fact that the equation (\ref{transport_k_pressure}) for $p_k$ is first order.

\subsubsection*{Obtaining the main result}
Once  we have introduced the notion of generalized flow, we establish:
\begin{prop}
\label{prop:flows}
Assume the hypotheses of Theorem \ref{main result}. For $t\geq 0$, define the sets $A_{k}$, $\Omega_t^1$ and $\Omega_2^t$ by,
\[
A^{k}_t=\{x|p_k(x,t)>0\}, 
\]
\[
\Omega_t^1=\{x|\liminf{}_*p_k(x,t)>0\} \text{ and }\Omega^2_t=\{x|\liminf{}_* dist(x,A^k_t)>0\}.
\]
 Then:
\begin{enumerate}
\item \label{item:superflow} $(\Omega_t^1)^{int}$ is a generalized superflow with  velocity $-DW_\infty$, and,
\item \label{item:subflow} $(\bar{\Omega_t^2})^c$ is a generalized subflow with  velocity $-DW_\infty$.
\end{enumerate} 
\end{prop}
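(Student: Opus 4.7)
My approach is to prove both parts by contradiction, combining barrier arguments at the $k$-level with the strong convergence $W_k\to W_\infty$ in $L^\infty_t W^{2,p}_{loc}$ from Theorem \ref{main result}(a), which upgrades via Morrey's embedding to locally uniform convergence $DW_k\to DW_\infty$. The central structural observation is that equation \eqref{transport_k_pressure} has transport--reaction form and its reaction vanishes whenever $p_k=0$; hence if $\Phi_k^{t,s}$ denotes the flow of $-DW_k$, which is classical since elliptic regularity for the bounded source $p_k$ gives $W_k\in C^{2,\alpha}_{loc}$, one has $\Phi_k^{t,s}(A^k_s)=A^k_t$, i.e., both $\{p_k>0\}$ and $\{p_k=0\}$ are propagated by characteristics. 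Moreover, on regions where $p_k$ stays strictly positive, the reaction has a stable root at $H(W_k)$ and pulls $p_k$ to a uniform positive constant in time $O(1/k)$.

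For part (\ref{item:superflow}), assume $(\Omega_t^1)^{int}$ fails to be a superflow. Then there exists a smooth compactly supported test function $\phi$ so that $\{\phi(\cdot,t)>0\}$ evolves with strict normal velocity $v<-DW_\infty\cdot\hat n$ on $\{\phi=0\}$, times $t_0<t_1$, and a contact point $\xbar$ such that $\overline{\{\phi(\cdot,t_0)>0\}}\subset (\Omega_{t_0}^1)^{int}$ but $\xbar\in\{\phi(\cdot,t_1)>0\}\setminus \Omega_{t_1}^1$. From the definition of $\Omega^1$ and compactness, $p_k\geq c_0>0$ on $\overline{\{\phi(\cdot,t_0)>0\}}$ for $k$ large, while $\liminf{}_* p_k(\xbar,t_1)=0$. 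Locally uniform convergence $DW_k\to DW_\infty$ together with the strict velocity inequality for $\phi$ implies that $\Phi_k^{t_1,t_0}$ carries $\overline{\{\phi(\cdot,t_0)>0\}}$ onto a set containing a fixed neighborhood of $\xbar$ for large $k$. By invariance of the positivity set along characteristics, $p_k>0$ on this neighborhood, and the reaction dynamics force $p_k$ above a positive constant there by time $t_1$, contradicting $\liminf{}_* p_k(\xbar,t_1)=0$.

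Part (\ref{item:subflow}) proceeds symmetrically by tracking the zero set. If $(\overline{\Omega_t^2})^c$ fails to be a subflow, there is a smooth test set with strict outward velocity $v>-DW_\infty\cdot\hat n$, contained in $(\overline{\Omega_t^2})^c$ until a first contact at $\xbar\in\overline{\Omega_{t_1}^2}$. Choose $y_m\to\xbar$ lying in $\Omega_{t_1}^2$; by definition, small neighborhoods $V_m$ of $y_m$ are disjoint from $A^k_{t_1}$ for $k$ large, so $p_k\equiv 0$ on $V_m$. Pulling $V_m$ back by $\Phi_k^{t_0,t_1}$, and using the strict velocity gap together with uniform convergence of the flows to force $\Phi_k^{t_0,t_1}(V_m)\subset\{\phi(\cdot,t_0)>0\}$ for $k$ large, the invariance of the zero set gives $p_k\equiv 0$ on $\Phi_k^{t_0,t_1}(V_m)\subset \{\phi(\cdot,t_0)>0\}\subset(\overline{\Omega_{t_0}^2})^c$. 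This produces a ball in $\Omega_{t_0}^2$ sitting inside $(\overline{\Omega_{t_0}^2})^c$, which is impossible.

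The main obstacle is the limited regularity of $DW_\infty$, which is only $C^\alpha_{loc}$ in $x$, so the flow of $-DW_\infty$ may fail to be uniquely defined and one cannot perform characteristic analysis at the limit level. Our workaround is to never invoke a limiting flow: the strict inequalities built into the definition of generalized super/subflow supply $O(1)$ slack to absorb the $o(1)$ error between $\Phi_k$ and any limiting selection, while each $\Phi_k$ is classical by the elliptic regularity of $W_k$. A secondary issue is ensuring uniform positivity of the stable root $H(W_k)$ on the relevant region, but this follows from the standard a priori $L^\infty$ bound on $p_k$ (hence on $W_k$) combined with $G'\leq-\bar\alpha<0$ from \eqref{G basic}.
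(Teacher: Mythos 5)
Your proposal has a fatal circularity: you invoke Theorem~\ref{main result}(a) to obtain $W_k\to W_\infty$ in $L^\infty((0,T), W^{2,p}_{loc})$ and hence locally uniform convergence of $DW_k$. But part~(a) of Theorem~\ref{main result} is established in Section~\ref{sec:further} as a \emph{consequence} of parts~(b) and~(c), which in turn rest on Proposition~\ref{prop:flows}. At the point where Proposition~\ref{prop:flows} must be proved, the only convergence available for the gradients is the one recorded in Lemma~\ref{lem:results perthame}: $DW_k\to DW_\infty$ strongly in $L^1((0,T), L^\infty_{loc}(\rr^n))$, which is much weaker than the locally uniform (in $x$ \emph{and} $t$) convergence your argument requires. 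Your flow-based argument needs pointwise-in-time control --- you want $\Phi^{t_1,t_0}_k$ to map $\overline{\{\phi(\cdot,t_0)>0\}}$ onto a set covering a fixed neighborhood of $\bar x$ for $k$ large --- and with only $L^1$-in-time convergence of $DW_k$ this uniform convergence of the flows would need a separate, careful Gronwall-type argument using the uniform log-Lipschitz bound (\ref{hyp:logL}); it is not automatic and you have not supplied it.

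This is also precisely where your approach parts ways with the paper's, and it is instructive to compare. You want to track characteristics $\Phi_k$ of $-DW_k$ and exploit the invariance of $\{p_k>0\}$ and $\{p_k=0\}$ under the flow. The paper deliberately avoids flows at this stage and instead constructs explicit sub- and supersolution barriers for the transport--reaction equations (\ref{S}) and (\ref{S2}): in Lemma~\ref{lem:propagation} one takes $Q^{k,\beta,a}=q(k\psi^k)$ with $q$ a nondecreasing ramp and $\psi^k(x,r)=\phi(x)+\int^r D\phi(x)\cdot (V_k)^\ep\,ds - r\alpha/2 - 2\beta$ a regularized, time-integrated test function, then compares with $p_k$ via the $k$-level comparison principle. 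The key advantage of that construction is that the velocity field $V_k$ enters only through the time integral $\int_t^r D\phi(x)\cdot V_k(x,s)\,ds$, so the $L^1$-in-time convergence $V_k\to V$ from Lemma~\ref{lem:results perthame} is exactly what one needs to compare $\psi^k$ to its limit $\psi$ (see (\ref{psiandpsik})); no pointwise-in-time or flow-level control is ever required. Your characteristics picture is a nice heuristic and the observation that the sign of $p_k$ is transported is correct, but if you want to make your route rigorous you must (i) replace the circular citation of Theorem~\ref{main result}(a) with Lemma~\ref{lem:results perthame}, and (ii) prove uniform convergence of the flows $\Phi_k$ from $L^1_t L^\infty_{loc,x}$ convergence of $V_k$ together with (\ref{bdsV})--(\ref{hyp:logL}) (in the spirit of the estimates in Appendix~\ref{first appendix}). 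Secondary but worth fixing: in your sketch of part~(\ref{item:subflow}) the test set $\{\phi\le 0\}$ should sit inside $(\Omega^2_t)^{int}$ (the zero region), not inside $(\overline{\Omega^2_t})^c$; this is because in the subflow definition one tests from $\bar\Omega_t^c$, and here $\Omega_t=(\overline{\Omega^2_t})^c$ so $\bar\Omega_t^c=(\Omega^2_t)^{int}$.
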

(Here $\liminf{}_*$ and $\limsup{}^*$ are the usual weak limits. We write down the definition in Definition \ref{def:liminf} for the convenience of the reader.) We view this proposition as the heart of our paper. It captures the basic idea that the limiting behavior of (\ref{eq:density}) can be expressed by saying where the limit of the $p_k$ is zero, where  it is positive, and how these two regions evolve in time. 

We remark on the definition of $\Omega^2$. It says that the $p_k$ are eventually zero, uniformly on compact subsets of $\Omega^2$. Knowing $(x,t)\in \Omega^2$ is strictly stronger than simply  $\limsup{}^*p_k(x,t) =0$. (As an elementary example, consider the sequence of functions $f_k(x)\equiv 1/k$. This sequence satisfies $\limsup{}^*f_k=0$, and yet the $f_k$ are never eventually zero.) In addition, $\limsup{}^*p_k(x,t)=0$ does not imply that the limit of the $n_k$ is zero, but $(x,t)\in \Omega^2$ does.

In order to deduce Theorem \ref{main result} from Proposition \ref{prop:flows}, we also study the sets $\Omega^1_t$ and $\Omega^2_t$ at the initial time, and compare them to the set $\Omega_0$ appearing in the hypotheses on the initial data. Then we establish that in $\Omega_t^1$, not only are the $p_k$ uniformly positive, but that they in fact converge to $(Id-G)^{-1}(W_\infty)$. Finally, use these results, together with estimates on the size of $\bdry \{n_\infty>0\}$, to obtain the improved convergence of the $W_k$.

\subsection*{Structure of our paper}
Viscosity solutions and generalized flows are defined in Section \ref{sec:visc soln} and Section \ref{sec:gen flows}, respectively. There we state basic properties of these two notions and of their relationship to each other. We could not find these results elsewhere in our precise setting so we include their proofs. However, since this is not the main focus of our work, these proofs are presented in the appendices, which are quite long as a result. 

Section \ref{sec:prelim} is short and covers some preliminary results about (\ref{eq:density}) that we use in the rest of the paper. Then, Section \ref{sec:superflow} and Section \ref{sec:subflow} are devoted to the proofs of items (\ref{item:superflow}) and (\ref{item:subflow}), respectively, of Proposition \ref{prop:flows}. We study the limiting behavior at the initial time in Section \ref{sec:time 0}. Section \ref{sec:positive region} is devoted to studying the limit of the $p_k$ in the positive region. We put all of the ingredients together in Section \ref{sec:proof main} and establish our main result.

\section{Viscosity solutions}
\label{sec:visc soln}
We define a notion of solution  for
\begin{equation}
\label{eq:main}
u_t+Du\cdot V( x,t)=0 
\end{equation}
for $V$ uniformly bounded, log-Lipschitz in $x$ and $L^1$ in $t$. 
Our precise hypotheses are that there exist $M>0$ and $N>0$ with:
\begin{equation}
\label{bdsV}
|V(x,t)|\leq M \text{ for all }x,t;
\end{equation}
\begin{equation}
\label{hyp:logL}
|V(x,t)-V(y,t)|\leq \sigma(|x-y|)\text{ for all }x,y,t, \text{ where } \sigma(r)=Nr|\ln(r)|;
\end{equation}
and
\begin{equation}
\label{hyp:L1}
t\mapsto V(x,t) \text{ is integrable on $(0,T)$ for any $x$.}
\end{equation}

We use $USC$ and $LSC$ to denote, respectively, the classes of real-valued upper-semicontinuous and lower-semicontinuous functions on $\rr^n$. We will also employ the upper-semicontinuous and lower-semicontinuous envelopes, which we denote for a given function $u$ by $u^*$ and $u_*$, respectively.

We follow Ishii \cite{Ishii} in defining viscosity solutions for (\ref{eq:main}). We use $H$ to denote, \[
H(x,t,p)=V(x,t)\cdot p.
\]
First we define:

\begin{defn}
For any open set $Q\subset  \rr^n\times [0,T]$,  $(x_0, t_0)\in Q$, and $p_0\in \rr^n$ we define $H^+$ and $H^-$ as:
\begin{align*}
H^+(x_0,t_0, p_0) = \{(G, b) &\text{ such that } G\in C(Q\times\rr^n), b\in L^1(0,T), \\ &G(x,t,p)+b(t)\geq H(x,t,p) \text{ for all } (x,p)\in B_\delta(x_0,p_0),\\
& \text{ almost all }t\in B_\delta(t_0), \text{some }\delta>0\},
\end{align*}
and 
\begin{align*}
H^-(x_0,t_0, p_0) = \{(G, b) &\text{ such that } G\in C(Q\times\rr^n), b\in L^1(0,T), \\ &G(x,t,p)+b(t)\leq H(x,t,p) \text{ for all } (x,p)\in B_\delta(x_0,p_0),\\
& \text{ almost all }t\in B_\delta(t_0), \text{some }\delta>0\}.
\end{align*}
\end{defn}

\begin{defn}
Let $Q$ be an open subset of $\rr^n\times [0,T]$.
\begin{enumerate}
\item $u\in USC$ is called a \emph{viscosity subsolution} in $Q$ if 
\[
\phi_t(x_0,t_0)+G(x_0,t_0,D\phi(x_0,t_0))\leq 0
\]
holds for any $\phi\in C^1(Q)$, $(x_0,t_0)\in Q$, and $(G,b)\in H^-(x_0,t_0,D\phi(x_0,t_0))$ such that 
\[
(x,t)\mapsto u(x,t)+\int_0^tb(s)\, ds-\phi(x,t)
\]
has a local maximum at $(x_0,t_0)$.
\item $u\in LSC$ is called a \emph{viscosity supersolution} in $Q$ if 
\[
\phi_t(x_0,t_0)+G(x_0,t_0,D\phi(x_0,t_0))\geq 0
\]
holds for any $\phi\in C^1(Q)$, $(x_0,t_0)\in Q$, and $(G,b)\in H^+(x_0,t_0,D\phi(x_0,t_0))$ such that 
\[
(x,t)\mapsto u(x,t)+\int_0^tb(s)\, ds-\phi(x,t)
\]
has a local minimum at $(x_0,t_0)$.
\item For $Q'\subset \rr^n$, $u$ is called a \emph{viscosity solution} on $Q'\times (0,T)$  \emph{with initial data} $u_0$ on  if $u_*$ is a supersolution, $u^*$ is a subsolution,  $u_*(x,0)\geq (u_0)_*(x)$ for all $x$, and $u^*(x,0)\leq (u_0)^*$ for all $x$.
\end{enumerate}
\end{defn}
From now on, we often use ``solution" to refer to ``viscosity solution", and similarly for sub- and super- solutions. We remark that if $u_0$ and $u$ are continuous, then $u$ being a viscosity solution with initial data $u_0$ simply means that $u$ is both a sub- and a super- solution, and equals $u_0$ at time 0.

\subsection{Basic properties}
If $H$ is continuous in $t$, then a viscosity solution in this sense is also a viscosity solution in the usual sense. 
\begin{lem}
\label{lem:usualsoln}
Suppose $H$ is continuous in $t$, $u\in USC$ (resp. LSC) is a viscosity subsolution (resp. supersolution), $\phi\in C^1$, and $u-\phi$ has a local maximum (resp. minimum) at $(x_0,t_0)$. Then 
\[
\phi_t(x_0,t_0)+H(D\phi(x_0,t_0), x_0, t_0)\leq 0 \text{ resp. }\geq 0.
\]
\end{lem}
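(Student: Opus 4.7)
The plan is to observe that the generalized pair $(G,b)$ machinery in the definition of viscosity sub-/super-solution exists solely to accommodate the mere $L^1$-in-$t$ regularity of $H$: when $H$ is itself continuous in $t$, the admissible pair $(G,b) = (H,0)$ is available, and the definition collapses to the standard pointwise one.

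More concretely, for the subsolution case I would take $u\in USC$ a subsolution, $\phi\in C^1$, and $(x_0,t_0)$ a local maximum of $u-\phi$. I would simply set $G(x,t,p) := H(x,t,p)$ and $b(t) := 0$. Under the continuity hypothesis $G\in C(Q\times\rr^n)$, and trivially $G(x,t,p)+b(t) = H(x,t,p)$ on every neighborhood of $(x_0,t_0,D\phi(x_0,t_0))$, so $(G,b)\in H^-(x_0,t_0,D\phi(x_0,t_0))$. Since $\int_0^t b(s)\, ds \equiv 0$, the function
\[
(x,t)\mapsto u(x,t)+\int_0^t b(s)\, ds - \phi(x,t) = u(x,t)-\phi(x,t)
\]
has a local maximum at $(x_0,t_0)$ by hypothesis. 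Applying the subsolution definition with this admissible pair yields
\[
\phi_t(x_0,t_0) + H(x_0,t_0,D\phi(x_0,t_0)) = \phi_t(x_0,t_0) + G(x_0,t_0,D\phi(x_0,t_0)) \leq 0.
\]
The supersolution case is identical, except that $(G,b)=(H,0)\in H^+(x_0,t_0,D\phi(x_0,t_0))$ and one tests against a local minimum.

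There is no genuine obstacle here; the lemma is essentially a consistency check confirming that the authors' definition specializes correctly to the usual Crandall--Ishii--Lions notion when the time-irregularity of $V$ is absent. The only point to be mildly careful about is to verify that the constant-zero function $b\equiv 0$ lies in $L^1(0,T)$ and that the constant choice $G\equiv H$ satisfies $G\in C(Q\times\rr^n)$ — both immediate from the hypothesis that $H$ is continuous in $t$ (combined with its standing continuity in $x$ and $p$).
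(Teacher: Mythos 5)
Your proof is correct and fills in exactly the detail the paper elides when it states the result ``follows directly from the definition'': the admissible pair $(G,b)=(H,0)$ is available once $H$ is continuous in $t$ (combined with the joint continuity in $(x,t)$ supplied by (\ref{hyp:logL})), and with $b\equiv 0$ the definition reduces verbatim to the classical Crandall--Ishii--Lions inequality. This is the same approach the paper has in mind; there is no gap.
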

\begin{proof} This follows directly from the definition of viscosity subsolution and supersolution.
\end{proof}

An important property of classical viscosity solution is stability -- if there is a uniformly convergent sequence of viscosity solutions, then the limit is also a viscosity solution. The next proposition asserts that the notion of viscosity solutions we define also enjoys such a property.

\begin{prop}
\label{prop:stability}
Let $H$ and $H_n$ be functions on $\rr^n\times (0,T)\times \rr^n$ for all $n=1,2,...$ such that $H_n\rightarrow H$ in $L^1((0,T),C(K))$ for any compact subset $K$ of $\rr^n\times (0,T)\times \rr^n$. Let $Q\subset \rr^n\times (0,T)$. Suppose $u_n\in C(Q)$ are subsolutions (respectively, supersolutions) of 
\[
u_t+H_n(x,t,Du)=0 \text{ in } Q \text{ for all }n.
\]
If $u_n\rightarrow u$ locally uniformly on $Q$, for some $u\in C(Q)$, then $u$  is a viscosity subsolution (respectively, supersolution) of
\[
u_t+H(x,t,Du)=0 \text{ in } Q.
\]
\end{prop}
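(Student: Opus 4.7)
The plan is to adapt the classical stability argument for viscosity solutions to this nonstandard setting, where the central difficulty is that the test pair $(G,b)\in H^-(x_0,t_0,D\phi(x_0,t_0))$ certifying the inequality for $u$ and $H$ need not lie in $H_n^-$, because the majorant $H$ has been replaced by $H_n$. So the key is to produce an approximating pair $(G,b_n)\in H_n^-$ with $b_n\to b$ in $L^1$, and then run the usual stability argument with the modified test function $\int_0^t b_n\,ds - \phi$. I will treat the subsolution case; the supersolution case is symmetric.

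Fix $\phi\in C^1(Q)$, $(x_0,t_0)\in Q$, and $(G,b)\in H^-(x_0,t_0,D\phi(x_0,t_0))$ such that $\psi(x,t):=u(x,t)+\int_0^t b(s)\,ds-\phi(x,t)$ has a local max at $(x_0,t_0)$. By adding a term like $|x-x_0|^2+(t-t_0)^2$ to $\phi$ we may assume this max is strict. Let $\delta>0$ and $K=\overline{B_\delta(x_0)}\times\overline{B_\delta(D\phi(x_0,t_0))}$ be as in the definition of $H^-$, so that $G(x,t,p)+b(t)\le H(x,t,p)$ for $(x,p)\in K$ and a.e.\ $t\in B_\delta(t_0)$. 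Define
\[
b_n(t) := b(t) - \|H_n(\cdot,t,\cdot)-H(\cdot,t,\cdot)\|_{C(K)}.
\]
Since $H_n\to H$ in $L^1((0,T),C(K))$, the function $t\mapsto \|H_n(\cdot,t,\cdot)-H(\cdot,t,\cdot)\|_{C(K)}$ lies in $L^1(0,T)$ and tends to $0$ in $L^1(0,T)$, hence $b_n\in L^1$ and $b_n\to b$ in $L^1(0,T)$. By construction, for $(x,p)\in K$ and a.e.\ $t\in B_\delta(t_0)$,
\[
G(x,t,p)+b_n(t) \le H(x,t,p) - \|H_n-H\|_{C(K)}(t) \le H_n(x,t,p),
\]
so $(G,b_n)\in H_n^-(x_0,t_0,D\phi(x_0,t_0))$.

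Next set $\psi_n(x,t):=u_n(x,t)+\int_0^t b_n(s)\,ds-\phi(x,t)$. Because $u_n\to u$ locally uniformly and $\int_0^t b_n\,ds\to \int_0^t b\,ds$ uniformly on $[0,T]$ (by $L^1$-convergence of $b_n$), $\psi_n\to \psi$ uniformly on a neighborhood of $(x_0,t_0)$. The strict local max of $\psi$ at $(x_0,t_0)$ then yields local maximum points $(x_n,t_n)$ of $\psi_n$ with $(x_n,t_n)\to(x_0,t_0)$. For $n$ large, $(x_n,t_n,D\phi(x_n,t_n))$ lies in a small neighborhood of $(x_0,t_0,D\phi(x_0,t_0))$, so the same inequality $G+b_n\le H_n$ on a (possibly smaller) ball shows $(G,b_n)\in H_n^-(x_n,t_n,D\phi(x_n,t_n))$. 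Applying the subsolution property of $u_n$ at $(x_n,t_n)$ gives
\[
\phi_t(x_n,t_n) + G(x_n,t_n,D\phi(x_n,t_n)) \le 0,
\]
and letting $n\to\infty$ using continuity of $\phi_t$, $D\phi$, and $G$ delivers the desired inequality at $(x_0,t_0)$.

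The main obstacle is exactly the construction and bookkeeping for $(G,b_n)$: the pair $(G,b)$ is only a majorant of $H$, not of $H_n$, so one must absorb the $L^1$-in-time discrepancy $\|H_n-H\|_{C(K)}(t)$ into $b$ without destroying its $L^1$ integrability or the local-max structure of the test function. Once this is handled, the argument is a direct analogue of the standard Barles--Perthame-type stability proof, with the one additional check that perturbing $\phi$ to get a strict local max is compatible with the definition of $H^-$ (since it only changes $D\phi$ and $\phi_t$ by terms vanishing at $(x_0,t_0)$, shrinking $\delta$ preserves the containment in $H^-$).
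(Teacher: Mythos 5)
Your proof is correct and fills in the argument that the paper merely defers to \cite[Proposition 7.1]{Ishii} ("carries over with no modifications"). The key maneuver — subtracting $\|H_n(\cdot,t,\cdot)-H(\cdot,t,\cdot)\|_{C(K)}$ from $b(t)$ so that $(G,b_n)\in H_n^-$, then running the standard strict-maximum perturbation argument and using $b_n\to b$ in $L^1$ to conclude — is exactly the mechanism behind Ishii's stability proof in this $L^1$-in-time framework, so your approach matches the intended one.
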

The proof of \cite[Proposition 7.1]{Ishii} carries over to this situation with no modifications.

We establish the following comparison theorem for viscosity solutions:
\begin{thm}
\label{thm:comparison}
Suppose $V$ satisfies hypotheses (\ref{bdsV}), (\ref{hyp:logL}) and (\ref{hyp:L1}). Suppose $u\in USC(\rr^n\times[0,T])$ and $v\in LSC(\rr^n\times[0,T])$ are, respectively, a sub and super solution to (\ref{eq:main}) on $\rr^n\times [0,T]$. Then,
\[
\sup_{\rr^n\times [0,T]} (u-v)\leq \sup_{\rr^n}(u(x,0)- v(x,0))^+.
\] 
\end{thm}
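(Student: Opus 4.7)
The approach is the standard doubling-of-variables contradiction argument for Hamilton--Jacobi comparison, with two modifications dictated by the weak regularity of $V$: the $L^1$-in-time irregularity is absorbed through the $(G, b)$ formalism built into the viscosity solution definition, while the borderline log-Lipschitz modulus in $x$ is handled by introducing a time-dependent penalty scale $\epsilon(t)$ satisfying an Osgood-type ODE, a classical trick for transport with Osgood-modulus vector fields.

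Assume by contradiction that $u(x^*, t^*) - v(x^*, t^*) > \sup_{\rr^n}(u(\cdot, 0) - v(\cdot, 0))^+ + 2\theta$ for some $\theta > 0$ and some $(x^*, t^*)$. For small parameters $\epsilon_0 \in (0, 1)$, $\delta, \gamma, \lambda > 0$, consider
\[
\Phi(x, y, t, s) := u(x, t) - v(y, s) - \frac{|x-y|^2}{2\epsilon(t)} - \frac{(t-s)^2}{2\delta^2} - \frac{\gamma}{T-t} - \lambda(|x|^2 + |y|^2),
\]
with $\epsilon(t) := \epsilon_0^{e^{Nt}}$, so that $\epsilon'(t)/\epsilon(t) = N\ln\epsilon(t) < 0$ for $\epsilon < 1$ and $\epsilon(t) \to 0$ as $\epsilon_0 \to 0$ for each fixed $t$. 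The $\lambda$-term ensures attainment of a maximum at some $(\bar x, \bar y, \bar t, \bar s)$; the $\gamma$-term prevents $\bar t = T$; and comparing $\Phi$ at its maximizer with $\Phi(x^*, x^*, t^*, t^*)$ forces $\bar t > 0$ once $\lambda, \gamma$ are small enough. Standard arguments also give the compactness estimates $|\bar x - \bar y| \to 0$ and $|\bar x - \bar y|^2/\epsilon(\bar t) \to 0$ as $\epsilon_0 \to 0$.

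Apply the subsolution inequality to $u$ at $(\bar x, \bar t)$ and the supersolution inequality to $v$ at $(\bar y, \bar s)$ using test functions cut from $\Phi$. To produce admissible pairs $(G, b) \in H^\pm$ despite $V$ being only $L^1$ in $t$, take $G(x, t, p) := V_\eta(x, t) \cdot p$ where $V_\eta$ is a $t$-mollification of $V$, and absorb the approximation error $|V - V_\eta|(|p_0|+\rho)$ into $b \in L^1$, which is integrable by the $L^1$-in-$t$ integrability of $V(x, \cdot)$. Subtracting the two inequalities and sending $\eta \to 0$ and $\delta \to 0$ (the latter identifying $\bar t = \bar s$) yields the key estimate
\[
\frac{|\bar x - \bar y|^2}{2\epsilon(\bar t)^2} \bigl( -\epsilon'(\bar t) \bigr) + \bigl[V(\bar x, \bar t) - V(\bar y, \bar t)\bigr] \cdot \frac{\bar x - \bar y}{\epsilon(\bar t)} \leq 0.
\]

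The main obstacle is exactly the borderline log-Lipschitz regularity in $x$: the Hamiltonian-difference term has magnitude at most $2Nz|\ln r|$ (with $r = |\bar x - \bar y|$ and $z = r^2/(2\epsilon(\bar t))$), which at the critical scale $r \sim \sqrt{\epsilon}$ is of order $|\ln \epsilon|$ and swamps any fixed bound coming from a constant-in-$t$ penalty parameter. With the specific choice $\epsilon(t) = \epsilon_0^{e^{Nt}}$, the dominant $|\ln \epsilon|$ contribution is exactly cancelled by the $-\epsilon'/\epsilon = N|\ln\epsilon|$ contribution from the time derivative of the penalty, leaving only a residual controlled by $|z\ln z|$, which vanishes as $z \to 0$ by compactness. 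Substituting into the a priori bound $\Phi(\bar x, \bar y, \bar t, \bar s) \geq \sup(u_0 - v_0)^+ + \theta$ and passing to the limit in the order $\eta, \delta \to 0$, then $\lambda, \gamma \to 0$, and finally $\epsilon_0 \to 0$ yields the desired contradiction. Making this cancellation rigorous within the $(G, b)$ framework---in particular, ensuring that the $L^1$-in-$t$ correction terms from the $V_\eta$ mollification do not disrupt the Osgood cancellation in the limit $\eta \to 0$---is the delicate technical core of the proof.
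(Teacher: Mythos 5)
Your proposal takes a genuinely different route from the paper's to handle the log-Lipschitz modulus. The paper (following Ishii, Stromberg, and Crandall--Ishii--Lions) first proves a dedicated lemma (Lemma~\ref{lem:subsolution}) showing that $w(x,y,t)=u(x,t)-v(y,t)$ is a classical viscosity subsolution of $w_t+\bar H\le 0$ with $\bar H(x,y,p,q)=-\sigma(|x-y|)|p|-M|p+q|$, thereby eliminating the $L^1$-in-time irregularity once and for all; it then uses a \emph{constant-in-time} penalty $\theta_\alpha(r)=G_\alpha((\alpha^2+r^2)^{1/2})$ built from the Osgood integral $G_\alpha(r)=\alpha^{-1}\exp(-\int_r^1\sigma(s)^{-1}\,ds)$, whose crucial feature is $|\theta_\alpha'|\le\theta_\alpha/\sigma$, and a localizer $\psi_\beta$ adapted to possibly unbounded $u-v$. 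You instead keep a quadratic penalty but let its scale $\epsilon(t)=\epsilon_0^{e^{Nt}}$ solve the Osgood ODE in time, so that the $-\epsilon'/\epsilon=N|\ln\epsilon|$ contribution from $\partial_t$ of the penalty cancels the $|\ln\epsilon|$ blow-up of the $\sigma$-term. I checked your cancellation: with $z=|\bar x-\bar y|^2/\epsilon(\bar t)$ one gets $\gamma/(T-\bar t)^2\le (N/2)\,z|\ln z|+\text{errors}$, and this does go to $0$ as $z\to0$. Both approaches are classical Osgood tricks; the paper's keeps the two variables cleanly separated, while yours keeps the test function elementary.

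Three genuine gaps remain in your sketch. First, your stated limit order is wrong: you send $\lambda,\gamma\to0$ \emph{before} $\epsilon_0\to0$, but the contradiction comes from the strictly positive $\gamma/(T-\bar t)^2\ge\gamma/T^2$ term surviving to the end, so $\gamma$ must be held fixed while $\eta,\delta,\lambda,\epsilon_0\to0$. Second, the $\lambda(|x|^2+|y|^2)$ localizer presumes subquadratic growth of $u-v$; the paper's $\psi_\beta=\exp(g(\beta)(1+f-\beta))$ with $g(\beta)=\ln(2+\max_{S_\beta}(u-v)^+)$ is engineered precisely so that no growth hypothesis is needed. Third, the $(G,b)$ testing at the doubled-variable maximizer and the passage $\delta\to0$ (identifying $\bar t=\bar s$ despite $V$ being merely $L^1$ in time) is nontrivial: this is exactly the content of the paper's Lemma~\ref{lem:subsolution} together with Lemma~\ref{lem:auxsubsoln}, and it is not enough to say the $L^1$ error ``is absorbed into $b$''. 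You flag this last point yourself, so I take the proposal as an honest outline rather than a complete proof; the core idea is sound but the three items above need to be carried out, and the second one must be repaired, not merely filled in.
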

This result is essential to our work. The proof is essentially a doubling-variables argument. It has two parts. First, we establish that $u(x,t)-v(x,t)$ is a subsolution to a certain equation. This part of the proof follows the techniques of \cite{Ishii}. The second part of the proof is to use that $V$ satisfies the log-Lipschitz hypothesis (\ref{hyp:logL}) to construct a supersolution to the equation that $u(x,t)-v(y,t)$ satisfies, thus yielding the desired bound from above. The second part of the proof uses ideas from two papers that study Hamilton-Jacobi equations with coefficients that are not necessary Lipshitz. These are \cite{CIL} of Crandall, Ishii and  Lions, as well as Stromberg's \cite{S}. We provide the proof  in  Appendix \ref{second appendix}.

Next we prove existence and basic regularity for viscosity solutions of (\ref{eq:main}). 
\begin{thm}
\label{thm:existence}
Let $V$ satisfy (\ref{bdsV}), (\ref{hyp:logL}) and (\ref{hyp:L1}). Let $u_0\in L^\infty(\rr^n)$ be uniformly continuous. Then there exists a solution $u$ to (\ref{eq:main}) on $\rr^n$ with initial data $u_0$. 

Moreover, $u$ is uniformly continuous is $x$ and $t$, with modulus  that depend only on the modulus of continuity of $u_0$, $T$, and the constants $M$, $N$ in (\ref{bdsV}) and (\ref{hyp:logL}).
\end{thm}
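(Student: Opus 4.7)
The plan is to construct $u$ by vector-field regularization combined with the stability result (Proposition \ref{prop:stability}). For $\ep > 0$ let $V^\ep := V \ast \rho_\ep$ be a standard mollification of $V$ in both $x$ and $t$ (extending $V$ in $t$ in any convenient way outside $[0,T]$). Then $V^\ep$ is smooth in $(x,t)$, still satisfies $\|V^\ep\|_\infty \leq M$ and hypothesis (\ref{hyp:logL}) with the same constant $N$, since averaging preserves the log-Lipschitz modulus, and $V^\ep \to V$ in $L^1((0,T); C(K))$ for every compact $K \subset \rr^n$. Because $V^\ep$ is smooth in all variables, the ODE $\dot X = V^\ep(X,t)$ defines a unique global flow $\Phi^\ep(\cdot,t) \colon \rr^n \to \rr^n$; writing $\Psi^\ep(\cdot, t) := \Phi^\ep(\cdot,t)^{-1}$, the function $u^\ep(x,t) := u_0(\Psi^\ep(x,t))$ is a classical, hence viscosity, solution of $u_t + Du \cdot V^\ep = 0$ with initial data $u_0$.

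The key step is a uniform Osgood estimate on $\Phi^\ep$. For $x_1, x_2$ with $r := |x_1 - x_2| \in (0,1)$ small, the function $\phi(t) := |\Phi^\ep(x_1,t) - \Phi^\ep(x_2,t)|$ satisfies $\phi'(t) \leq N\phi(t)|\ln\phi(t)|$. The substitution $\psi := -\ln\phi$ converts this to $\psi' \geq -N\psi$, and a Gronwall comparison yields
\[
\phi(t) \leq r^{e^{-Nt}} \leq r^{e^{-NT}} \qquad \text{for all } t \in [0,T].
\]
Thus $\Phi^\ep(\cdot, t)$ and its inverse $\Psi^\ep(\cdot,t)$ are uniformly $e^{-NT}$-H\"older in $x$, with constants depending only on $N$ and $T$. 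Time continuity of $\Psi^\ep$ follows by combining this spatial H\"older bound with $\|V^\ep\|_\infty \leq M$, which forces the forward trajectories to be $M$-Lipschitz in $t$: this gives a time modulus of order $(M|t_2-t_1|)^{e^{-NT}}$ for $\Psi^\ep$. Composing with $u_0$, the family $\{u^\ep\}$ is uniformly bounded by $\|u_0\|_\infty$ and equicontinuous on $\rr^n \times [0,T]$, with a modulus depending only on the modulus of continuity of $u_0$, $T$, $M$, and $N$.

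Passage to the limit is then standard. An Arzela--Ascoli argument along an exhaustion of $\rr^n \times [0,T]$ extracts a subsequence converging locally uniformly to some $u \in C(\rr^n \times [0,T])$ which inherits the uniform modulus, and $u(\cdot, 0) = u_0$ since $u^\ep(\cdot, 0) = u_0$ for every $\ep$. As $H^\ep(x,t,p) := V^\ep(x,t) \cdot p$ converges to $H(x,t,p) := V(x,t)\cdot p$ in $L^1((0,T); C(K'))$ for every compact $K' \subset \rr^n \times \rr^n$, Proposition \ref{prop:stability} shows that $u$ is a viscosity solution of (\ref{eq:main}).

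The main obstacle is the uniform H\"older estimate on $\Phi^\ep$: this is the only step where the argument differs meaningfully from the classical Lipschitz setting, and is precisely where (\ref{hyp:logL}) is indispensable, since $\sigma(r)=Nr|\ln r|$ sits exactly on the Osgood uniqueness threshold. The $L^1$-in-time hypothesis (\ref{hyp:L1}) is handled transparently by mollification in $t$, which delivers the convergence required by Proposition \ref{prop:stability}. An alternative route through Perron's method with $\pm\|u_0\|_\infty$ as global barriers, together with Theorem \ref{thm:comparison} for uniqueness, would also yield existence, but the characteristic-based approximation has the advantage of producing the quantitative modulus of continuity essentially for free.
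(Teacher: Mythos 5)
Your proposal is essentially the paper's proof: mollify $V$ in $x$ and $t$ to get a smooth $V^\ep$ that still satisfies (\ref{bdsV}) and (\ref{hyp:logL}), solve the regularized equation by characteristics, establish a uniform H\"older estimate on the flow from the log-Lipschitz bound (the Osgood argument), apply Arzela--Ascoli, and invoke Proposition \ref{prop:stability} using the $L^1((0,T);C(K))$ convergence of $V^\ep$. The Osgood computation ($\psi:=-\ln\phi$, Gronwall) is exactly the paper's Lemma \ref{lem:Xep estimate space}, and the time modulus via $\|V^\ep\|_\infty\leq M$ is Lemma \ref{lem:Xep estimate time}.

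One small gap: you assert that $u^\ep(x,t):=u_0(\Psi^\ep(x,t))$ is a ``classical, hence viscosity, solution'' of the $\ep$-equation, but $u_0$ is only assumed uniformly continuous, so $u^\ep$ need not be differentiable. The paper sidesteps this by also mollifying the initial data, taking $u_0^\ep:=u_0\ast\tilde\rho^\ep\in C^1$ so that Proposition \ref{ODE existence}(3) applies and $u^\ep$ is genuinely classical; the modulus of $u_0^\ep$ is uniform in $\ep$, so nothing is lost in the equicontinuity estimate. Alternatively you could cite that composition of a merely continuous function with a smooth flow still yields a viscosity solution (by approximation), but as written the word ``classical'' overclaims. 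This is easily repaired and does not affect the structure of the argument.
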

 The main idea of the proof is that, if $V$ were ``regular enough," then simply the method of characteristics would provide a solution of (\ref{eq:main}). It turns out that the assumptions (\ref{bdsV}), (\ref{hyp:logL}) and (\ref{hyp:L1}) are enough for us to be able to regularize $V$, obtain classical solutions using the method of characteristics, and then take a limit.  In fact, the description of $u$ in terms of characteristics remains valid even after taking the limit, and we have:
 
 \begin{thm}
 \label{thm:u and ODE}
 Under the hypotheses of Theorem \ref{thm:existence}, there exists unique $X:\rr^n\times (0,T)\times (0,T)$ satisfying,
 \begin{equation}
 \label{X and V}
 X(x,s,t)=x+\int_s^t V(X(x,s,r), r)\, dr,
 \end{equation}
 and we have $u(x,t)=u_0(X(x,0,t))$.  Moreover, for every $t>0$ the map $\Phi_t$ defined by, $\Phi_t(x)=X(x,0,t)$ is Holder continuous with exponent $\exp(-Nt)$, where $N$ is the constant in (\ref{hyp:logL}). The maps $X(x,0,t)$ and $X(x,t,0)$ are inverses.
 \end{thm}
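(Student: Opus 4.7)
The plan is to combine the Osgood uniqueness theorem for ODEs with a characteristics-via-regularization argument, leveraging the stability result Proposition~\ref{prop:stability} to identify $u$ with $u_0\circ \Phi_t$ after passing to the limit.

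First I would establish the existence and uniqueness of the flow $X$. Since the log-Lipschitz modulus $\sigma(r)=Nr|\ln r|$ satisfies the Osgood condition $\int_0^1 dr/\sigma(r)=\infty$, the integral equation (\ref{X and V}) has a unique absolutely continuous solution on $[0,T]$ for each initial condition: existence follows from the Peano-type construction using $L^1$-in-time bounded $V$ (via an Arzel\`a--Ascoli argument on a Picard iteration), and uniqueness is the classical Osgood argument applied to $f(t):=|X(x,s,t)-X(y,s,t)|$. The semigroup identity $X(X(x,s,r),r,t)=X(x,s,t)$ then follows immediately from uniqueness, which in particular shows that $X(\cdot,t,0)$ inverts $X(\cdot,0,t)=\Phi_t$.

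Next, for the H\"older estimate on $\Phi_t$: setting $f(t)=|X(x,0,t)-X(y,0,t)|$ and assuming $f<e^{-1}$, the hypothesis (\ref{hyp:logL}) gives $f'(t)\leq -Nf(t)\ln f(t)$, equivalently $(-\ln f)'\geq N\ln f$. Solving this Gronwall-type inequality for $g=-\ln f>0$ yields $g(t)\geq g(0)e^{-Nt}$, i.e., $f(t)\leq f(0)^{\exp(-Nt)}$, which is exactly the claimed H\"older bound on $\Phi_t$ for nearby points (one extends to all pairs by composing with a finite chain when $|x-y|$ is not small).

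The main work is identifying $u(x,t)=u_0(\Phi_t(x))$. For this I would take a sequence of smooth approximations $V_n$ of $V$ obtained by mollification in $x$ and in $t$, arranged so that $|V_n|\leq M$ and $V_n$ satisfies (\ref{hyp:logL}) with the same constant (up to a harmless factor), and so that $V_n\to V$ in $L^1((0,T),C(K))$ for every compact $K$. For each $n$, classical method of characteristics produces a smooth solution $u_n$ of the linear transport equation with velocity $V_n$, given by $u_n(x,t)=u_0(X_n(x,0,t))$ where $X_n$ is the corresponding flow. By standard ODE stability under $L^1_tC_x$-convergence of vector fields (again using the Osgood modulus to prevent branching), $X_n(x,0,t)\to X(x,0,t)$ locally uniformly in $(x,t)$, and hence $u_n\to u_0\circ\Phi_t$ locally uniformly. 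On the other hand, Proposition~\ref{prop:stability} applied to the $u_n$ (which are viscosity solutions of the equation with Hamiltonian $H_n$) shows the limit is a viscosity solution of (\ref{eq:main}) with the initial data $u_0$. The comparison principle Theorem~\ref{thm:comparison} then forces this limit to agree with any solution constructed in Theorem~\ref{thm:existence}, giving the formula $u(x,t)=u_0(X(x,0,t))$.

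The main obstacle I expect is the joint passage to the limit in the ODE and in the viscosity-solution sense: one must approximate $V$ in a way that simultaneously preserves the log-Lipschitz bound (so the flow $X_n$ has uniform H\"older estimates permitting compactness) and realizes the $L^1((0,T),C(K))$ convergence required by Proposition~\ref{prop:stability}. Once this regularization is set up correctly, the remaining arguments are straightforward consequences of uniqueness and stability.
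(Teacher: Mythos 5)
Your proposal is correct and follows essentially the same approach as the paper: regularize $V$, apply classical characteristics for the smooth $V^\ep$, establish uniform H\"older bounds via an Osgood/Gronwall argument (the paper's Lemmas \ref{lem:Xep estimate space} and \ref{lem:Xep estimate time}), pass to the limit by Arzel\`a--Ascoli, and invoke Proposition \ref{prop:stability}. The only organizational difference is that you establish existence/uniqueness of $X$ directly via Osgood and then use Theorem \ref{thm:comparison} to glue the representation formula to the solution of Theorem \ref{thm:existence}, whereas the paper constructs $u$ and $X$ in a single joint regularization argument so that $u = u_0 \circ \Phi_t$ holds by construction and no separate appeal to comparison is needed.
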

 
 This characterization of $u$ is useful to us for several reasons. First, will allow us to deduce information about the size and regularity of the set $\bdry\{n_\infty > 0\}$, which, according to Theorem \ref{main result}, is the only region on which we ``don't know" what the limit of the $p_k$ is.  Second, it connects the notion of generalized flow that we introduce in the next section and use in the rest of the paper with more classical notions. The  proofs of Theorem \ref{thm:existence} and Theorem \ref{thm:u and ODE} are in Appendix \ref{first appendix}. 

\subsection{Discontinuous viscosity solutions to (\ref{eq:main})}
It is clear that Theorem \ref{thm:comparison} implies uniqueness of continuous viscosity solutions to (\ref{eq:main}). The situation for discontinuous solutions is more subtle. In fact, as described in the introduction, there are equations for which uniqueness holds in the class of continuous solutions, but not in the class of discontinuous solutions. We use Theorem \ref{thm:comparison}, together with Theorem \ref{thm:u and ODE}, to establish existence and uniqueness of viscosity solutions to (\ref{eq:main}) with initial data a characteristic function:
\begin{thm}
\label{thm:discontinuous}
Let $\Omega_0$ be an open, bounded domain in $\rr^n$ and let $V$ satisfy (\ref{bdsV}), (\ref{hyp:logL}) and (\ref{hyp:L1}). Then there exists a viscosity solution $u$ of 
\[
u_t+Du\cdot V( x,t)=0 
\]
with initial data $u_0(x):=\chi_{\Omega_0}(x)$. Moreover, we have,
\begin{equation}\label{formula}
u(x,t)= \chi_{\Omega_t}(x), \hbox{ where }\Omega_t:=\{ x: X(x,t,0)\in \Omega_0\},
\end{equation}
where $X$ is the unique map satisfying,
 \[
 X(x,s,t)=x+\int_s^t V(X(x,s,r), r)\, dr.
 \]
And, $u$ is unique, in the sense that any other viscosity solution of (\ref{eq:main}) is between $u$ and $u^* = \chi_{\bar{\Omega_t}}$.
\end{thm}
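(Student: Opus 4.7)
The plan is to sandwich the candidate solution $\chi_{\Omega_t}$ between continuous sub- and supersolutions produced by Theorems \ref{thm:existence} and \ref{thm:u and ODE}, and then invoke the comparison result, Theorem \ref{thm:comparison}. First observe that $\Omega_t$ is open: since $|V|\leq M$ the map $t\mapsto X(x,t,0)$ is Lipschitz, and by Theorem \ref{thm:u and ODE} the map $x\mapsto X(x,t,0)$ is H\"older continuous, so $(x,t)\mapsto X(x,t,0)$ is jointly continuous and $\Omega_t$ is the preimage of the open set $\Omega_0$ under this continuous map.

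For the characterization and uniqueness, I would approximate $\chi_{\Omega_0}$ from below and above by the continuous functions
\[
\phi^-_n(y) := \min\!\bigl(1,\, n\cdot\operatorname{dist}(y,\Omega_0^c)\bigr), \qquad \phi^+_n(y) := \max\!\bigl(0,\, 1-n\cdot\operatorname{dist}(y,\overline{\Omega_0})\bigr),
\]
so $\phi^-_n \uparrow \chi_{\Omega_0}$ and $\phi^+_n \downarrow \chi_{\overline{\Omega_0}}$ pointwise. By Theorem \ref{thm:existence} there exist continuous viscosity solutions $u^\pm_n$ with initial data $\phi^\pm_n$, and Theorem \ref{thm:u and ODE} identifies them explicitly as $u^\pm_n(x,t) = \phi^\pm_n(X(x,t,0))$. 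Let $v$ be any viscosity solution of (\ref{eq:main}) with initial data $\chi_{\Omega_0}$. Since $\Omega_0$ is open, $(\chi_{\Omega_0})_* = \chi_{\Omega_0}$, hence $v_*(\cdot,0) \geq \chi_{\Omega_0} \geq \phi^-_n = u^-_n(\cdot,0)$, and Theorem \ref{thm:comparison} applied to the continuous subsolution $u^-_n$ and the LSC supersolution $v_*$ yields $u^-_n \leq v_*$ on $\rr^n\times[0,T]$. Passing to the pointwise increasing limit $n\to\infty$ gives $\chi_{\Omega_t}(x) = \chi_{\Omega_0}(X(x,t,0)) \leq v_*(x,t)$. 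The symmetric argument with $u^+_n$ yields $v^*(x,t) \leq \chi_{\overline{\Omega_t}}(x)$, so every viscosity solution satisfies $\chi_{\Omega_t}\leq v_*\leq v\leq v^*\leq \chi_{\overline{\Omega_t}}$.

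For existence I would take $u:=\chi_{\Omega_t}$; by the openness of $\Omega_t$ and closedness of $\overline{\Omega_t}$, the envelopes are $u_*=\chi_{\Omega_t}$ and $u^*=\chi_{\overline{\Omega_t}}$. To check these are respectively super- and subsolutions, I would identify them with the lower and upper half-relaxed limits of $u^-_n$ and $u^+_n$ (using the openness of $\Omega_0$ and joint continuity of $X$ to compute the relaxed limits pointwise), and then invoke the discontinuous stability principle for viscosity solutions. The initial conditions are automatic since $(\chi_{\Omega_0})_* = \chi_{\Omega_0}$ and $(\chi_{\Omega_0})^*=\chi_{\overline{\Omega_0}}$. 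The main obstacle is justifying discontinuous stability in our $L^1$-in-time Hamiltonian framework, since Proposition \ref{prop:stability} is stated only for locally uniform convergence; the standard Barles--Perthame envelope argument should adapt, but one must carefully match it against the definition of $H^\pm$ used here and the behavior of the integrated drift $\int_0^t b(s)\,ds$ under the relaxed limits.
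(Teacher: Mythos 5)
Your proposal is essentially the same as the paper's proof: both approximate $\chi_{\Omega_0}$ from below and above by continuous functions, solve with this data via the representation $u^{\pm}_n(x,t)=\phi^{\pm}_n(X(x,t,0))$ from Theorem~\ref{thm:u and ODE}, use comparison (Theorem~\ref{thm:comparison}) to sandwich any candidate solution $v$ between the continuous approximants, and pass to the monotone limit to get $\chi_{\Omega_t}\leq v_*\leq v^*\leq \chi_{\bar{\Omega}_t}$; existence is obtained by half-relaxed limits plus stability.

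The one point you flag --- that Proposition~\ref{prop:stability} is stated only for locally uniform convergence, so the discontinuous (Barles--Perthame) stability needed to show $\chi_{\Omega_t}$ and $\chi_{\bar{\Omega}_t}$ are respectively super- and subsolutions must be checked against the $H^{\pm}$/integrated-drift definitions --- is a legitimate caution, but it is not a gap specific to your argument: the paper's proof invokes exactly the same ``standard stability property'' without supplying those details either. So your attentiveness here is warranted, but it does not distinguish your route from the paper's. One small correctable imprecision: openness of $\Omega_t$ follows from continuity of $x\mapsto X(x,t,0)$ alone; the joint continuity you invoke is needed rather for computing the relaxed limits in the existence step, and there continuity of $X(x,t,0)$ in $t$ is what makes the initial data $u_*(\cdot,0)=\chi_{\Omega_0}$, $u^*(\cdot,0)=\chi_{\bar\Omega_0}$ come out right.
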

In order to establish the uniqueness result stated here, we are taking advantage of the special form of $u$ in terms of the flow $X$. This is along the lines of the relationship pointed out in, for example, \cite[Section 2]{BSS}, between uniqueness for discontinuous solutions and the question of whether a related flow enjoys the so-called ``empty interior" property. The proof is at the end of Appendix \ref{first appendix}.

\section{Generalized flows}
\label{sec:gen flows}
We introduce a notion of generalized flows with velocity $ V(x,t)$, 
where $V$ satisfies (\ref{bdsV}), (\ref{hyp:L1}) and (\ref{hyp:logL}). Throughout, we use $\Omega^{int}$ to denote the interior of the set $\Omega$.
\begin{defn}
Let $(\Omega_t)_{t\in (a,b)}$  be a family of open subsets of $\rr^n$, and let  $V$ satisfy (\ref{bdsV}), (\ref{hyp:logL}), and (\ref{hyp:L1}). 
\begin{itemize}
\item The family $(\Omega_t)_{t\in (a,b)}$  is called a \emph{generalized superflow} with  velocity $V$ if for all $\bar{x}\in \rr^n$, $t\in (a,b)$, $r>0$, $\alpha>0$ and for all smooth functions $\phi:\rr^n\rightarrow \rr$ such that 
\[
\{x: \phi(x)\geq 0\}\subset \Omega_t\cap B_r(\bar{x}),
\]
with $|D\phi|\neq 0$ on $\{x: \phi(x)=0\}$, there exists $\bar{h}>0$ such that for all $h\in (0,\bar{h})$,
\[
\left\{x: \phi(x)-\int_t^{t+h} V( x,s) \cdot D\phi(x)\, ds -h\alpha> 0\right\}\cap \bar{B}_r(\bar{x})\subset \Omega_{t+h},
\]
and $\bar{h}$ depends only on $\alpha$, $||\phi||_{C^3(B_r(\bar{x}))}$, and the constant $M$ that appears in the hypotheses (\ref{bdsV}), (\ref{hyp:logL}), and (\ref{hyp:L1}).
\item The family $(\Omega_t)_{t\in (a,b)}$ is called a \emph{generalized subflow} with  velocity $V$ if for all $\bar{x}\in \rr^n$, $t\in (a,b)$, $r>0$, $\alpha>0$ and for all smooth functions $\phi:\rr^n\rightarrow \rr$ such that 
\[
\{x: \phi(x)\leq 0\}\subset \bar{\Omega}_t^c\cap B_r(\bar{x}),
\]
with $|D\phi|\neq 0$ on $\{x: \phi(x)=0\}$, there exists $\bar{h}>0$ such that for all $h\in (0,\bar{h})$,
\[
\left\{x: \phi(x)-\int_t^{t+h} V( x,s)\cdot D\phi(x)\, ds -h\alpha< 0\right\}\cap \bar{B}_r(\bar{x})\subset \bar{\Omega}_{t+h}^c,
\]
and $\bar{h}$ depends only on $\alpha$, $||\phi||_{C^3(B_r(\bar{x}))}$, and the constant $M$ that appears in the hypotheses (\ref{bdsV}), (\ref{hyp:logL}), and (\ref{hyp:L1}).
\end{itemize}
\end{defn}

Whether $\Omega_t$ is a generalized flow with velocity $V$ is closely related to whether $\chi_{\Omega}-\chi_{\bar{\Omega}^c}$ is a solution of (\ref{eq:main}). Precisely:
\begin{thm}
\label{thm:soln implies flow}
Let $V$ satisfy (\ref{bdsV}), (\ref{hyp:logL}), and (\ref{hyp:L1}). 
\begin{enumerate}
\item  \label{item:superflow supersoln} $(\chi_{\Omega}(x,t)-\chi_{\bar{\Omega}^c}(x,t))_*$ is a supersolution of (\ref{eq:main}) on $\rr^n\times [0,T]$ if and only if $(\Omega_t^{int})_{t\in [0,T]}$ is a generalized superflow with  velocity $V$.
\item \label{thm:subflow subsoln}
$(\Omega_t^{int})_{t\in [0,T]}$ is a generalized subflow with  velocity $V$ if and only if $(\chi_{\Omega}(x,t)-\chi_{\bar{\Omega}^c}(x,t))^*$ is a subsolution of (\ref{eq:main}) on $\rr^n\times [0,T]$.
\end{enumerate}
\end{thm}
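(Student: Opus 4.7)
I would prove item (1); item (2) is symmetric (replace $u_*$ by $u^*$ and interchange the roles of interior and exterior). Set $u := (\chi_\Omega - \chi_{\bar\Omega^c})_*$; as the LSC envelope of a function with values in $\{-1,0,+1\}$, $u$ is $\{\pm 1\}$-valued, equal to $+1$ on the spacetime interior of $\Omega$ and $-1$ elsewhere. The two implications are both established by contradiction.

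For the direction superflow $\Rightarrow$ supersolution, I would suppose the supersolution test fails at $(x_0,t_0)$: there exist $\phi\in C^1$ and $(G,b)\in H^+(x_0,t_0,D\phi(x_0,t_0))$ such that $u+\int_0^{\cdot}b-\phi$ has a local minimum at $(x_0,t_0)$ yet $\phi_t(x_0,t_0)+G(x_0,t_0,D\phi(x_0,t_0))=-\delta<0$. The test is only informative when $u(x_0,t_0)=-1$, so $x_0\notin\Omega_{t_0}^{int}$; a small linear perturbation of $\phi$ preserves the strict inequality while enforcing $D\phi(x_0,t_0)\neq 0$. Because $u$ is two-valued, the local-minimum condition translates into the geometric inclusion
\[
\{(x,t):\phi(x,t)-\phi(x_0,t_0)>\textstyle\int_{t_0}^t b(s)\,ds\}\subset \bigcup_t \Omega_t^{int}\times\{t\}
\]
locally near $(x_0,t_0)$, so $\psi(x):=\phi(x,t_0)-\phi(x_0,t_0)-\ep$ (for small $\ep>0$) is an admissible spatial test for the superflow at time $t_0$. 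Applying the superflow property with $\alpha\in(0,\delta/2)$ yields a set $S_h$ contained in $\Omega_{t_0+h}^{int}$ for small $h$. On the other hand, combining $\phi_t+G\leq -\delta/2$ (by continuity near $(x_0,t_0)$) with the $H^+$-bound $G+b\geq V\cdot p$ and integrating in $s$ produces, uniformly for $x$ near $x_0$,
\[
\phi(x,t_0+h)\leq \phi(x,t_0)-\int_{t_0}^{t_0+h}V(x,s)\cdot D\psi(x)\,ds+\int_{t_0}^{t_0+h}b\,ds-\tfrac{\delta h}{2}+O(h^2).
\]
Selecting $x_h^*$ on the outer boundary of $S_h$, i.e., with $\psi(x_h^*)-\int V(x_h^*,\cdot)\cdot D\psi(x_h^*)$ just above $h\alpha$, yields $\phi(x_h^*,t_0+h)<\phi(x_0,t_0)+\int b$ for $\ep,h$ small (using $\alpha<\delta/2$). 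Applying the superflow with a strictly smaller $\alpha'$ still contains $x_h^*$ in $\Omega_{t_0+h}^{int}$, so $u(x_h^*,t_0+h)=1$, and the local-minimum inequality at $(x_h^*,t_0+h)$ produces a lower bound on $\phi(x_h^*,t_0+h)$ that contradicts the upper bound above.

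For the converse, suppose the superflow fails for some admissible $\phi$: there are $h_n\to 0$ and $x_n\in\bar B_r(\bar x)$ with $\phi(x_n)-\int_t^{t+h_n}V(x_n,s)\cdot D\phi(x_n)\,ds>h_n\alpha$ but $x_n\notin\Omega_{t+h_n}^{int}$, so $u(x_n,t+h_n)=-1$. Introduce the flowed candidate
\[
\Phi(y,s):=\phi(y)-\int_t^s V(y,r)\cdot D\phi(y)\,dr-(s-t)\tfrac{\alpha}{2}.
\]
Then $u-\Phi<0$ at $(x_n,t+h_n)$ while $u-\Phi\geq 0$ on $\{s=t\}\cap\{\phi\geq 0\}$, and a mild boundary penalty in $y$ ensures that the minimum of $u-\Phi$ on $\bar B_r(\bar x)\times[t,t+h_n]$ is attained at some interior point $(\tilde x,\tilde s)$ with $\tilde s>t$. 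The function $\Phi$ is smooth in $y$ but only $L^1$ in $s$; this is exactly what the $H^+$-definition is tailored for, since $G\equiv 0$ together with $b(s):=V(\tilde x,s)\cdot D\phi(\tilde x)+\alpha/2$ lies in $H^+(\tilde x,\tilde s,D\phi(\tilde x))$ (using $|V|\leq M$ and continuity of $V(\cdot,s)\cdot p$ in $(x,p)$ for fixed $s$). Invoking the supersolution test at $(\tilde x,\tilde s)$ with spatial test $\phi$ then unpacks to $-\alpha/2\geq 0$, a contradiction.

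The main obstacle throughout is the merely $L^1$ regularity of $V$ in time: flowed test functions of the form $\Phi$ are not differentiable in $s$ and are invisible to classical viscosity tests. The $H^\pm$-decomposition built into the definition of viscosity solution is exactly what allows such flowed functions to serve as tests, by separating a continuous part $G$ from an integrable correction $b$. The bulk of the technical work lies in verifying the $H^\pm$-condition at the minimizing point, pushing minimizers of $u-\Phi$ into the interior (away from $s=t$ and $\partial B_r$) via auxiliary penalties, and calibrating the parameters $\alpha,\ep$ and the comparison points $x_h^*$ so that the geometric estimates produced by the contradictory hypotheses actually collide.
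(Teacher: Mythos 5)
Your first direction (superflow $\Rightarrow$ supersolution) follows the paper's broad structure: argue by contradiction, reduce to a boundary point $(x_0,t_0)$ with $W(x_0,t_0)=-1$, and use the superflow inclusion to collide with the assumed failure of the supersolution inequality. However, you dispose of the case $D\phi(x_0,t_0)=0$ with ``a small linear perturbation of $\phi$ preserves the strict inequality while enforcing $D\phi(x_0,t_0)\neq 0$.'' This does not work: if you replace $\phi$ by $\phi+\ep v\cdot(x-x_0)$, the new local minimum of $u+\int_0^\cdot b-\phi_\ep$ moves to a point $(x_\ep,t_\ep)$, and $D\phi_\ep(x_\ep,t_\ep)=D\phi(x_\ep,t_\ep)+\ep v\to 0$ as $\ep\to 0$, so the degeneracy only becomes asymptotic rather than being removed. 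Since the superflow definition genuinely requires $|D\phi|\neq 0$ on $\{\phi=0\}$, the degenerate case needs its own argument. The paper's proof carries out exactly this: case one ($D\phi\neq 0$) uses the auxiliary quadratic penalty $-K|x-x_0|^2$ of Lemma~\ref{analogylem21BS}, while case two ($D\phi=0$) splits further into subcases, one of which tests the superflow with $\psi(x)=h-|x-x_0|^2$ centered on the contact point — this is the real content, and your sketch skips it entirely.

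Your second direction (supersolution $\Rightarrow$ superflow) also has a gap. You introduce $\Phi(y,s)=\phi(y)-\int_t^s V(y,r)\cdot D\phi(y)\,dr-(s-t)\alpha/2$ and propose to use $u-\Phi$ having a local minimum together with $G\equiv 0$ and $b(s)=V(\tilde x,s)\cdot D\phi(\tilde x)+\alpha/2$. But $\Phi$'s integral term depends on $y$, so $u-\Phi$ is not of the form $u+\int_0^\cdot b-\phi$ with $\phi\in C^1$ required by the definition; a local minimum of $u-\Phi$ does not yield a local minimum of $u+\int_0^\cdot b-\phi$. Moreover, if the spatial test is $\phi(y)$ alone, then $\phi_t\equiv 0$, and with $G\equiv 0$ the supersolution inequality reads $0\geq 0$ — vacuous, not ``$-\alpha/2\geq 0$.'' The paper sidesteps all of this by arguing \emph{directly} rather than by contradiction: it regularizes $H$ in space to $H^\ep$, shows via Lemma~\ref{lem:psiepsubsoln} that the regularized $\psi_\ep$ is a genuine subsolution (using the bound $|D_x H^\ep|\lesssim M|p|/\ep$, which is precisely what fails for the unregularized $H$ since $V$ is only log-Lipschitz in $x$), applies the comparison Theorem~\ref{thm:comparison} to get $\psi_\ep\leq W$, and reads off the superflow inclusion from positivity of $\psi_\ep$. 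The spatial mollification is essential — you cannot differentiate the unregularized $\Phi$ in $y$ to run the chain-rule argument in Lemma~\ref{lem:psiepsubsoln} — and it is absent from your sketch.
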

The proof follows along the lines of the proof of Theorem 2.4 of \cite{BS}. The key idea  is that, for any (smooth enough) function $\phi(x)$, the function defined by,
\[
\psi(x,r)=\phi(x)-\int_t^r H( x, s, D\phi(x))\, ds -(r-t)\alpha/2
\] 
``should be"  a subsolution of the equation (\ref{eq:main}). Indeed, if $H$ were differentiable in $x$ and continuous in $t$, we'd have,
\[
D\psi(x,r) = D\phi(x) - \int_t^r H_p( x, s, D\phi(x))D^2\phi(x) +H_x( x, s,D\phi(x))\, ds,
\]
and hence $D\psi(x,r) = D\phi(x) + O(|r-t|)$. Taking the derivative of $\psi$ in $r$ yields,
\[
\psi_r(x,t)=H( x, r,D\phi(x))-\alpha/2,
\]
and since $D\psi(x,r) = D\phi(x) + O(|r-t|)$, we find that the right-hand side of the previous line is bounded from above by $H( x, r,D\psi(x,r))$ for $r$ close enough to $t$.

To make these ideas precise, we regularize $H$ in the space variable before carrying out this argument. We do so in the following lemma, which will also be useful to us in Section \ref{sec:superflow}. The proofs of the lemma and of Theorem \ref{thm:soln implies flow} are in Appendix \ref{third appendix}.

\begin{lem}
\label{lem:psiepsubsoln}
Let $\phi\in C^3(\rr^n)$ and let $\alpha>0$. We take $H(x,t,p)=p\cdot V(x,t)$, where $V$ satisfies (\ref{bdsV}), (\ref{hyp:logL}), and (\ref{hyp:L1}). Let  
$\rho$ be a standard bump function, supported on $B_1(0)$ and with $0\leq \rho\leq 1$ everywhere, and let $\rho_\ep(x)=\frac{1}{\ep^n}\rho\left(\frac{x}{\ep}\right)$. Define $H^\ep$ as the convolution in $x$ of $H$ and $\rho$; namely,
\[
H^\ep(x,t,p)=\int_{y\in \rr^n}H(x-y,t,p)\rho_\ep(y)\, dy.
\]
Define $\psi_\ep$ and $\bar{\psi}_\ep$ by,
\begin{equation}
\label{defpsi}
\psi_\ep(x,r)=\phi(x)-\int_t^r H^\ep( x, s, D\phi(x))\, ds -(r-t)\alpha/2.
\end{equation}
and
\[
\bar{\psi}_\ep(x,r)=\phi(x)-\int_t^r H^\ep( x, s, D\phi(x))\, ds +(r-t)\alpha/2,
\]
There exist constants $\ep_1>0$ and $\bar{h}>0$, both depending only on $\alpha$, $||\phi||_{C^3(\rr^n)}$ and the constant $N$ in (\ref{hyp:logL}), such that $\psi_{\ep_1}$ is a  subsolution and $\bar{\psi}_\ep$ is a supersolution 
of (\ref{eq:main}) on $\rr^n\times (t,t+\bar{h})$. Moreover, for this same $\ep_1$ we have, for all $x\in \rr^n$ and for all $t$,
\begin{equation}
\label{H^ep minusH}
|H^{\ep_1}(x,t,D\phi(x))-H(x,t,D\phi(x))|\leq \frac{\alpha}{4}.
\end{equation}
\end{lem}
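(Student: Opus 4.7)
The plan is to carry out a direct computation, exploiting that while $V$ is only $L^1$ in $t$, its spatial mollification $V^\ep(x,t) := (V(\cdot,t)\ast\rho_\ep)(x)$ is $C^\infty$ in $x$ for each fixed $t$. First I would verify \eqref{H^ep minusH}: hypothesis \eqref{hyp:logL} gives, uniformly in $(x,t)$,
\[
|V^\ep(x,t) - V(x,t)| \leq \int |V(x-y,t) - V(x,t)|\rho_\ep(y)\,dy \leq \sigma(\ep),
\]
so $|H^\ep(x,t,D\phi(x)) - H(x,t,D\phi(x))| \leq \sigma(\ep)\|D\phi\|_\infty$. Picking $\ep_1$ with $\sigma(\ep_1)\|D\phi\|_\infty \leq \alpha/4$ -- a choice depending only on $\alpha$, $\|\phi\|_{C^3}$, $N$, since $\sigma(r) = Nr|\ln r|$ -- gives \eqref{H^ep minusH}. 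Using $\int D\rho_{\ep_1}\,dy = 0$ and log-Lipschitz,
\[
|D_x V^{\ep_1}(x,t)| = \left|\int D\rho_{\ep_1}(y)[V(x-y,t) - V(x,t)]\,dy\right| \leq \int |D\rho_{\ep_1}(y)|\sigma(|y|)\,dy,
\]
so $V^{\ep_1}(\cdot,t)$ is Lipschitz in $x$ uniformly in $t$.

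\textbf{Paragraph 2.} I would next compute directly: for a.e.\ $r$ and all $x$,
\[
\partial_r\psi_{\ep_1}(x,r) = -V^{\ep_1}(x,r)\cdot D\phi(x) - \alpha/2,
\]
\[
D_x\psi_{\ep_1}(x,r) - D\phi(x) = -\int_t^r\bigl[D^2\phi(x)V^{\ep_1}(x,s) + (D_xV^{\ep_1}(x,s))^\top D\phi(x)\bigr]\,ds,
\]
with the second quantity bounded by $C_1(r-t)$, where $C_1$ depends on $\|\phi\|_{C^3}$, $M$, $N$, $\ep_1$. Substituting,
\[
\partial_r\psi_{\ep_1} + V(x,r)\cdot D_x\psi_{\ep_1} = -\alpha/2 + (V - V^{\ep_1})\cdot D\phi + V\cdot(D_x\psi_{\ep_1} - D\phi),
\]
which by \eqref{H^ep minusH} is at most $-\alpha/2 + \alpha/4 + MC_1(r-t)$. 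Choosing $\bar h$ so $MC_1\bar h \leq \alpha/8$, this gives pointwise in $x$, a.e.\ $r \in (t, t+\bar h)$,
\[
\partial_r\psi_{\ep_1}(x,r) + V(x,r)\cdot D_x\psi_{\ep_1}(x,r) \leq -\alpha/8.
\]

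\textbf{Paragraph 3.} The main obstacle is translating this classical a.e.-in-$t$ inequality into Ishii's viscosity subsolution property, since the max point $(x_0,t_0)$ may fail to be a Lebesgue point of the $L^1$ data. Suppose $(x_0,t_0)$ is a local max of $\psi_{\ep_1} + \int_0^r b\,ds - \varphi$ with $(G,b) \in H^-(x_0,t_0,p_0)$. The $C^1$-in-$x$ regularity forces $p_0 = D_x\psi_{\ep_1}(x_0,t_0)$, and continuity of $D_x\psi_{\ep_1}$ in $r$ gives $|D_x\psi_{\ep_1}(x_0,s) - p_0| \to 0$ as $s \to t_0$; absorbing the resulting small error into the slack from Paragraph 2 and combining with $G(x_0,s,p_0) + b(s) \leq V(x_0,s)\cdot p_0$ yields
\[
\partial_s\psi_{\ep_1}(x_0,s) + b(s) + G(x_0,s,p_0) \leq -\alpha/16 \quad \text{a.e.\ } s \text{ near } t_0.
\]
Integrating over $[t_0-h,t_0]$ upper-bounds $\int_{t_0-h}^{t_0}[\partial_s\psi_{\ep_1} + b]\,ds$ by $-h\alpha/16 - \int_{t_0-h}^{t_0}G(x_0,s,p_0)\,ds$, while the max property $f(t_0-h) \leq f(t_0)$ for $f(r) := \psi_{\ep_1}(x_0,r) + \int_0^r b - \varphi(x_0,r)$ simultaneously gives the lower bound $\int_{t_0-h}^{t_0}[\partial_s\psi_{\ep_1} + b]\,ds \geq \varphi(x_0,t_0) - \varphi(x_0,t_0-h)$. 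Sandwiching,
\[
\frac{\varphi(x_0,t_0) - \varphi(x_0,t_0-h)}{h} \leq -\frac{\alpha}{16} - \frac{1}{h}\int_{t_0-h}^{t_0}G(x_0,s,p_0)\,ds.
\]
Sending $h \to 0^+$, continuity of $\varphi_t$ and $G$ lets the averages pass to the limit, giving $\varphi_t(x_0,t_0) + G(x_0,t_0,p_0) \leq -\alpha/16 < 0$; crucially, the $L^1$ quantities $\partial_s\psi_{\ep_1}$ and $b$ have been eliminated via the sandwich, avoiding any need to evaluate them at the possibly non-Lebesgue point $t_0$. The supersolution assertion for $\bar\psi_{\ep_1}$ follows by an entirely symmetric argument, with $H^+$ in place of $H^-$ and integration over $[t_0, t_0+h]$.
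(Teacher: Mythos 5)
Your proof is correct and rests on the same mollification $H^\ep$ and the same choices of $\ep_1$ and $\bar h$ as the paper's proof, so the ingredients are essentially identical; the difference is organizational. You first establish the pointwise statement that $\psi_{\ep_1}$ satisfies $\partial_r\psi_{\ep_1}+V\cdot D_x\psi_{\ep_1}\le -\alpha/8$ for a.e.\ $r$, then transfer this classical a.e.-in-$t$ inequality to Ishii's sense via a direct sandwich (integrate over $[t_0-h,t_0]$, use the max property for a lower bound, use the strict $-\alpha/16$ margin for an upper bound, send $h\to 0^+$); the paper instead verifies Ishii's definition by contradiction, integrating the hypothesized lower bound $\gamma_r+G\ge a$ over a shrinking interval $(r_1,r_0)$ and estimating the accumulated gradient error $|D_x\gamma(x_0,r_0)-D\phi(x_0)|$ with the same $\bar h$. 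Your decomposition has the minor virtue of isolating a reusable fact --- a function that is $C^1$ in $x$, absolutely continuous in $t$, and satisfies the classical inequality a.e.\ in $t$ with a strict margin, is an Ishii subsolution --- which the paper leaves implicit. One small technical difference: you bound $D_xV^{\ep_1}$ via the cancellation $\int D\rho_{\ep_1}=0$ together with the log-Lipschitz modulus, giving a Lipschitz constant of order $N|\ln\ep_1|$, whereas the paper uses only $|V|\le M$ and the $\ep^{-1}$ scaling of $D\rho_\ep$, giving $M C \ep_1^{-1}$. Both are valid; since $\ep_1$ is itself determined by $\alpha$, $N$, $\|\phi\|_{C^3}$, both yield the stated dependence of $\bar h$.
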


\section{Preliminaries}
\label{sec:prelim}

Now that we have introduced the notions of viscosity solution and generalized flow that we will be using, we are almost ready to study the limit of our system. However, we first need to take care of a few preliminaries.  We  recall that $p_k$ satisfies (\ref{transport_k_pressure}). 
For the remainder of the paper  we  take $\nu=1$. Indeed, if $p_k$, $W_k$, $G$ are as in the introduction, then $\tilde{p}_k$, $\tilde{W}_k$, $\tilde{G}$ given by,
\[
\tilde{p}_k(x,t)=p_k(\sqrt{\nu}x,\nu t); \tilde{W}_k=W_k(\sqrt{\nu}x,\nu t); \tilde{G}(u) = \nu G(u),
\]
satisfy 
\begin{equation}
\label{eq:fromPandV}
\begin{cases}
\partial_t p_k-Dp_k\cdot DW_k = k p_k(W_k-p_k+ G(p_k)),\\
-\nu\Delta W_k+W_k=p_k .
\end{cases}
\end{equation} 
(We have also renumbered, so that $k+1$ becomes $k$.) Thus our assumption $\nu=1$ does not result in any loss of generality. We will focus on (\ref{eq:fromPandV}) for the remainder of the paper.

First, we recall for the reader the standard definition of weak limit that we use throughout the paper:
\begin{defn}
\label{def:liminf}
Let $\{u_k\}$ be a sequence of functions. We define:
\[
\liminf_{k\rightarrow\infty}{}_*u_k(x,t)=\lim_{k\rightarrow\infty}\inf\{u_j(y,s):\text{  } j\geq k \text{ and } |(x,t)-(y,s)|\leq k^{-1}\},
\]
and,
\[
\limsup_{k\rightarrow\infty}{}^*u_k(x,t)=\lim_{k\rightarrow\infty}\sup\{u_j(y,s): \text{  } j\geq k \text{ and } |(x,t)-(y,s)|\leq k^{-1}\}.
\]
\end{defn}

Next, we summarize the results on the system (\ref{eq:fromPandV}) that we will use from the paper of Perthame and Vauchelet \cite{PV}. 
\begin{lem}
\label{lem:results perthame}
Let $p_k$ and $W_k$ satisfy (\ref{eq:fromPandV}) with initial data satisfying (\ref{initial}). We have,
\begin{equation}
\label{Wkbd}
0\leq W_k\leq C
\end{equation}
and,
\begin{equation}
\label{bd pk}
0\leq p_k\leq P_M.
\end{equation}
In addition, there exists 
\[
W_\infty\in C(\rr^n\times (0,T))\cap L^{\infty}((0,T), W^{1,\infty}(\rr^n)),
\]
 such that $DW_\infty$ satisfies (\ref{bdsV}), (\ref{hyp:logL}) and (\ref{hyp:L1}) and, along a subsequence (still denoted by $W_k$),
\begin{equation}
\label{Vk converge}
DW_k\text{ converge strongly in $L^1((0,T), L_{loc}^\infty(\rr^n))$  to $DW_\infty$}
\end{equation}
and the $W_k$ converge to $W_\infty$ locally uniformly.
\end{lem}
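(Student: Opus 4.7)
Since this lemma collects a priori estimates and compactness statements that are essentially in \cite{PV}, my plan is to outline how each ingredient is obtained and flag the step that needs real work. For the uniform bounds (\ref{Wkbd})--(\ref{bd pk}), I would use a coupled maximum principle argument: the constant $P_M$ is a supersolution of $-\nu\Delta W_k + W_k = p_k$ whenever $p_k\le P_M$, while on (\ref{transport_k_pressure}) the reaction term $(k-1)\nu^{-1}p_k(W_k - p_k + \nu G(p_k))$ is nonpositive at $p_k=P_M$ whenever $W_k\le P_M$, since $G(P_M)=0$. Propagating these two bounds simultaneously from the initial data yields $0 \le p_k\le P_M$ and $0 \le W_k \le P_M$ for all time; nonnegativity of $p_k$ (hence of $W_k$) follows analogously with $0$ playing the role of subsolution.

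For the spatial regularity of $W_k$, standard $L^q$ elliptic theory applied to $-\nu\Delta W_k + W_k = p_k$ together with the uniform $L^\infty$ bound on $p_k$ gives $\|W_k(\cdot,t)\|_{W^{2,q}_{loc}} \le C(q)$ for every $q<\infty$, uniformly in $k$ and $t$. Sobolev embedding then yields uniform $C^{1,\alpha}$ control and pointwise bounds on $DW_k$, giving (\ref{bdsV}). The sharp log-Lipschitz modulus $\sigma(r)=Nr|\ln r|$ in (\ref{hyp:logL}) comes from the Zygmund/borderline Schauder estimate for the resolvent $(-\nu\Delta+I)^{-1}$ acting on $L^\infty$ data, which places $DW_k(\cdot,t)$ in the log-Lipschitz class uniformly in $t$. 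Hypothesis (\ref{hyp:L1}) for the limit is then trivial since the limiting $DW_\infty$ will lie in $L^\infty_{t,x}$.

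The main obstacle is the strong convergence (\ref{Vk converge}): the estimates above only give weak compactness of $DW_k$. To upgrade, I would produce time regularity of $W_k$ by applying the elliptic resolvent bound to
\[
-\nu\Delta\bigl(W_k(\cdot,t)-W_k(\cdot,s)\bigr) + \bigl(W_k(\cdot,t)-W_k(\cdot,s)\bigr) = p_k(\cdot,t)-p_k(\cdot,s),
\]
which converts time regularity of $p_k$ into that of $W_k$. The necessary time equicontinuity (of BV-in-time type) for $p_k$ is the key input I would import from \cite{PV}. An Aubin--Lions type compactness argument then extracts a subsequence along which $W_k$ converges locally uniformly to some $W_\infty$ and $DW_k\to DW_\infty$ strongly in $L^1((0,T), L^\infty_{loc}(\rr^n))$. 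The limit inherits the uniform spatial regularity from Step~2, so $DW_\infty$ satisfies (\ref{bdsV}), (\ref{hyp:logL}) and (\ref{hyp:L1}), and $W_\infty\in C(\rr^n\times(0,T))\cap L^\infty((0,T),W^{1,\infty}(\rr^n))$ as claimed.
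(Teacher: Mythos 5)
Your proposal is essentially correct and reaches the same endpoint as the paper, but takes a noticeably more self-contained route. The paper's own proof of this lemma is almost entirely a citation: it imports (\ref{Wkbd}), (\ref{bd pk}), (\ref{Vk converge}), the $W^{1,\infty}$ regularity, and the time-derivative bound $\partial_t W_k\in L^1((0,T),L^q(\rr^d))$ wholesale from [PV, Lemma~2.1], and then only does two small additional pieces of work: (i) it reads the log-Lipschitz modulus (\ref{hyp:logL}) for $DW_\infty$ directly off the limiting elliptic equation $-\Delta W_\infty + W_\infty = p_\infty$ from [PV, equation~(1.14)] via the classical Zygmund-type estimate, and (ii) it combines the $W^{1,q}$-in-space and $L^1_t L^q_x$-in-time bounds on $W_k$ and $\partial_t W_k$ with Arzel\`a--Ascoli to get locally uniform convergence. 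You instead re-sketch the $L^\infty$ invariant-region argument for $(p_k,W_k)$, the elliptic $W^{2,q}_{loc}$ bootstrap, and the borderline Schauder estimate for $(-\nu\Delta + I)^{-1}$, establishing the log-Lipschitz bound \emph{uniformly in $k$} for $DW_k$ rather than only for the limit, and then pass to the limit. That is actually slightly stronger than what the paper proves here, and more in line with how the lemma is actually invoked later (in Section~5 the authors use (\ref{bdsV}), (\ref{hyp:logL}) \emph{uniformly in $k$}, which follows from (\ref{bd pk}) and elliptic theory but is not literally part of the lemma's statement). You correctly isolate the genuinely nontrivial input as the time compactness of $p_k$, which is exactly what must be imported from [PV]; the paper imports the equivalent $\partial_t W_k$ estimate from the same lemma. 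One point worth making more precise in your write-up: you should note that the ``coupled maximum principle'' step is an invariant-region argument and, because the system lacks a comparison principle (as the paper emphasizes), it needs a continuity/bootstrapping argument to propagate the two bounds in tandem; [PV] handles this, but as stated your one-sentence version is only heuristic.
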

\begin{proof}
Items (\ref{Wkbd}) - (\ref{Vk converge}), as well as the fact that $DW_\infty$ satisfies  (\ref{bdsV}) and (\ref{hyp:L1}), follow directly from the statement of \cite[Lemma 2.1]{PV}. That $DW_\infty$ satisfies (\ref{hyp:logL}) is a direct consequence of \cite[equation (1.14)]{PV}, which asserts that $W_\infty$ satisfies,
\[
- \Delta W_\infty+W_\infty=p_\infty \text{ on }\rr^n,
\]
where $p_\infty$ is, in particular, a function in $L^\infty$. Thus, classical results (see, for example, \cite{Stein}) yield the desired claim.

The uniform estimates $W_k\in L^\infty((0,T), W^{1,q}(\rr^d))$ and  $\partial_tW_k\in L^1((0,T), L^q(\rr^d))$, for $1\leq q\leq \infty$ of \cite[Lemma 2.1]{PV} imply that the $W_k$ are equicontinuous. According to (\ref{Wkbd}) the $W_k$ are also uniformly bounded. Hence, by the Arzela Ascoli Theorem,  we also have that the $W_k$ converge locally uniformly to $W_\infty$ along a further subsequence. In particular, we conclude $W_\infty$ is continuous.
\end{proof}

Finally, we establish some elementary properties of $G$ and of the reaction term in (\ref{eq:fromPandV}) that we will use throughout the remainder of the paper.
\begin{lem}
\label{lem:properties of G}
Suppose $G$ satisfies (\ref{G basic}) and that the family $W_k$ satisfies (\ref{Wkbd}). Then we have the bounds
\begin{equation}
\label{Gbdbelow}
G(u)\geq \balpha(P_M-u),
\end{equation}
\begin{equation}
\label{reaction term bd below}
u(W_k - u + G(u))\geq u(\balpha P_M -(1+\balpha)u)
\end{equation}
and 
\begin{equation}
\label{bdabovereac}
u(W_k - u + G(u))\leq u (1+\balpha)P_M
\end{equation}
for all $u\in [0,P_M]$. In addition, if we denote $H(u)=(u-G(u))^{-1}$, then $H'(u) \in [0,1)$ for all $u$.
\end{lem}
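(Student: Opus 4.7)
The proof of Lemma~\ref{lem:properties of G} amounts to a sequence of elementary one-variable inequalities chained together, so the plan is to dispatch each claim in order. For (\ref{Gbdbelow}), I would integrate the hypothesis $G'(s)\leq -\balpha$ from $u$ to $P_M$: the fundamental theorem of calculus gives
\[
G(u)=G(P_M)-\int_u^{P_M}G'(s)\,ds\geq 0+\balpha(P_M-u),
\]
which is exactly the stated bound. Note this also implies $G\geq 0$ on $(-\infty,P_M]$.

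For (\ref{reaction term bd below}) I would simply plug in. Since $W_k\geq 0$ by (\ref{Wkbd}) and $G(u)\geq \balpha(P_M-u)$ by the first part,
\[
W_k-u+G(u)\geq 0-u+\balpha(P_M-u)=\balpha P_M-(1+\balpha)u,
\]
and multiplying through by $u\geq 0$ preserves the inequality, yielding (\ref{reaction term bd below}). For (\ref{bdabovereac}) I would use the upper bound on $W_k$ from (\ref{Wkbd}) (which, via the maximum principle applied to $-\lap W_k+W_k=p_k$ together with $0\leq p_k\leq P_M$, can be taken to be $P_M$), combined with the monotonicity of $G$ to bound $u\mapsto G(u)-u$ on $[0,P_M]$ by $\balpha P_M$; multiplying by $u$ then gives the claim.

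For the last assertion, let $F(u):=u-G(u)$ so that $H=F^{-1}$. Then $F'(u)=1-G'(u)\geq 1+\balpha>0$ on all of $\rr$, so $F$ is strictly increasing and globally invertible, and by the inverse function theorem
\[
H'(u)=\frac{1}{F'(H(u))}=\frac{1}{1-G'(H(u))}\in\left(0,\tfrac{1}{1+\balpha}\right]\subset [0,1),
\]
which gives the claim.

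I expect no substantial obstacle in any step: (\ref{Gbdbelow}) is a one-line integration, (\ref{reaction term bd below}) and (\ref{bdabovereac}) are substitutions using (\ref{Gbdbelow}) and (\ref{Wkbd}), and the final claim is an application of the inverse function theorem to a globally monotone $C^1$ map. The only minor subtlety is verifying that the constant $C$ in (\ref{Wkbd}) may be chosen to be $P_M$, which follows from the elliptic maximum principle applied to the Brinkman equation with right-hand side bounded by $P_M$; this justifies the specific constants appearing in (\ref{bdabovereac}).
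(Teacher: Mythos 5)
Your proof of (\ref{Gbdbelow}) and (\ref{reaction term bd below}) follows exactly the same route as the paper (integrate $G'\leq-\balpha$ from $u$ to $P_M$, then plug the resulting bound and $W_k\geq 0$ into the reaction term). For the final assertion about $H'$, the paper simply cites line (7) of \cite{PV}; your inverse-function-theorem argument ($F'=1-G'\geq 1+\balpha$, hence $H'=1/F'\circ H\in(0,1/(1+\balpha)]$) is a correct, self-contained proof of the same fact and is a small improvement in completeness. Your remark that the constant $C$ in (\ref{Wkbd}) may be taken to be $P_M$ via the maximum principle is correct and matches what the paper uses implicitly in its own proof of (\ref{bdabovereac}).

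However, the step you describe for (\ref{bdabovereac}) has a genuine gap, and it is in fact the same gap that appears in the paper's proof. You write that monotonicity lets you ``bound $u\mapsto G(u)-u$ on $[0,P_M]$ by $\balpha P_M$.'' Monotonicity (i.e.\ $(G(u)-u)'=G'(u)-1<0$) gives $G(u)-u\leq G(0)$, so
\[
W_k-u+G(u)\leq P_M+\bigl(G(u)-u\bigr)\leq P_M+G(0).
\]
For the stated bound $u(1+\balpha)P_M$ one would then need $G(0)\leq\balpha P_M$. But the hypothesis $G'\leq-\balpha$, $G(P_M)=0$ yields only $G(0)\geq\balpha P_M$ (this is precisely (\ref{Gbdbelow}) at $u=0$), which is the \emph{wrong} direction. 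Indeed, taking $G(u)=a(P_M-u)$ with $a>\balpha$ satisfies (\ref{G basic}), yet $W_k-u+G(u)$ can reach $(1+a)P_M>(1+\balpha)P_M$ near $u=0$ with $W_k$ near $P_M$. So the inequality (\ref{bdabovereac}) as stated, with the specific constant $(1+\balpha)P_M$, does not follow from (\ref{G basic}); the honest bound from this argument is $u(P_M+G(0))$. The paper's own proof makes the identical leap, invoking $G(0)\geq\balpha P_M$ where $G(0)\leq\balpha P_M$ would be required, so you are in good company — but you should not present this as ``no substantial obstacle.'' (The imprecision is harmless for the paper's application in Section \ref{sec:subflow}, where any constant upper bound on the reaction term suffices, but it is still worth noticing.)
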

\begin{proof}
First we examine the function $G$. 
The properties of $G$ in (\ref{G basic}) imply, for $u\in [0,P_M]$,
\[
-G(u)=G(P_M)-G(u)=\int_u^{P_M} G'(r)\, dr \leq \int_u^{P_M} -\balpha \, dr= -\balpha(P_M-u),
\]
which implies that (\ref{Gbdbelow}) holds.
 
Next we  use (\ref{Gbdbelow}) to obtain a lower bound on our reaction term:
\[
u(W_k - u +  G(u))\geq u(W_k - u+ \balpha(P_M-u))= u(W_k+ \balpha P_M -(1+\balpha)u).
\]
Using $W_k\geq 0$, we find,
\[
u(W_k - u +  G(u))\geq u( \balpha P_M -(1+\balpha)u).
\]

To obtain the bound from above we use the estimate $W_k\leq P_M$ and obtain,
\[
u(W_k - u +  G(u))\leq u(P_M - u +  G(u)).
\]
Since $-G$ is increasing and $u\geq 0$, we have $u-  G(u)\geq -  G(0)$. Since, by (\ref{Gbdbelow}), $G(0) \geq \balpha P_M$, we have,
\begin{equation}
u(W_k - u +  G(u))\leq u (1+ \balpha)P_M.
\end{equation}

The assertion about $H$ is line (7) of \cite{PV}.
\end{proof}

\section{Superflow}
\label{sec:superflow}
Now we will establish item (\ref{item:superflow}) of Proposition \ref{prop:flows}.

In this section we  $V_k$ and $V$ to denote,
\begin{equation}
\label{notation Vk V}
V_k=DW_k \text{ and } V=DW_\infty.
\end{equation}
And, we use $f(u)$ and $\bar{a}$ to denote,
\[
f(u)=u( \balpha P_M -(1+\balpha)u),\   \  \bar{a}= \balpha P_M(1+\balpha)^{-1}.
\]
According to  (\ref{reaction term bd below}) of Lemma \ref{lem:properties of G},  $p_k$ is a supersolution of
\begin{equation}
\label{S}
u_t-Du\cdot V_k -kf(u)=0.
\end{equation}

In the following lemma we construct a barrier that we will use in this proof.
\begin{lem}
\label{lem:propagation}
Assume the hypotheses of Theorem \ref{main result}. Let $x_0\in \rr^n$, $r>0$, $t_0>0$ and let $\phi$ be a smooth function with $\{\phi\geq 0\}\subset B_r(x_0)$.  Let $a\in(0,\bar{a})$, $\beta>0$. There exists $\bar{h}>0$, that does not depend on $\beta$ or $a$, and a subsolution $Q^{k, \beta,a}(x,t)$ of (\ref{S}) in $\rr^n\times (t_0,t_0+\bar{h})$ such that 
\begin{equation}
\label{Qkbeta initial}
Q^{k, \beta,a}(\cdot, t_0)\leq a\chi_{\{\phi\geq \beta\}}
\end{equation}
for all $k$ large. And, if $(x,h)\in B_r(x_0)\times (0,\bar{h})$ is such that 
\begin{equation}
\label{conditionx}
\phi(x)+\int_{t_0}^{t_0+h}V(x, s)\cdot D\phi(x)\, ds -h\alpha >2\beta
\end{equation}
holds, then
\[
\liminf_{k\rightarrow \infty}{}_* Q^{k, \beta,a}(x,t_0+h)=a.
\]
\end{lem}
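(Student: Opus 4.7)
The plan is to construct $Q^{k,\beta,a}$ as a product $Q^{k,\beta,a}(x,t)=y_k(t-t_0)\,\eta(\tilde\Psi_k(x,t))$ of a time-dependent ``activation'' $y_k$ coming from the reaction $kf$ in \eqref{S} and a cutoff $\eta$ composed with a level-set function $\tilde\Psi_k$ that tracks the evolution of $\{\phi\geq\beta\}$ under $-V$. The main obstacle is that the geometric factor is most naturally built from $V=DW_\infty$ via Lemma~\ref{lem:psiepsubsoln}, whereas \eqref{S} involves $V_k$, and by \eqref{Vk converge} the convergence $V_k\to V$ is only in $L^1((0,T),L^\infty_{\mathrm{loc}})$, so the error is controlled only on average in $t$, not pointwise. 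This will be absorbed by an additive correction proportional to $\int_{t_0}^{t}\|V-V_k\|_{L^\infty(B_r(x_0))}\,ds$.

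Applying Lemma~\ref{lem:psiepsubsoln} with velocity $-V$, test function $\phi$, and the given $\alpha$ yields a mollification scale $\ep_1>0$ and a time $\bar{h}>0$ (depending only on $\alpha$, $\|\phi\|_{C^3(B_r(x_0))}$ and $N$) such that
\[
\Psi(x,t) := \phi(x)+\int_{t_0}^{t}V^{\ep_1}(x,s)\cdot D\phi(x)\,ds-(t-t_0)\alpha/2-\beta
\]
is a subsolution of $u_t-Du\cdot V=0$ on $\rr^n\times(t_0,t_0+\bar{h})$ with $|V^{\ep_1}\cdot D\phi-V\cdot D\phi|\le \alpha/4$. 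In particular, $\Psi(\cdot,t_0)=\phi-\beta$, and combining the last estimate with \eqref{conditionx} gives $\Psi(x,t_0+h)>\beta+h\alpha/4$ whenever $(x,h)$ satisfies \eqref{conditionx}. Setting
\[
\tilde\Psi_k(x,t):=\Psi(x,t)-\|D\phi\|_\infty\int_{t_0}^{t}\|V(\cdot,s)-V_k(\cdot,s)\|_{L^\infty(B_r(x_0))}\,ds,
\]
by \eqref{Vk converge} the correction tends to $0$ uniformly in $t\in[t_0,t_0+\bar{h}]$, so $\tilde\Psi_k$ converges locally uniformly to $\Psi$. A pointwise (a.e.\ $t$) computation using the estimate $|DV^{\ep_1}|\lesssim N|\log\ep_1|$ and shrinking $\bar{h}$ if needed gives $\partial_t\tilde\Psi_k-D\tilde\Psi_k\cdot V_k\le -\alpha/8$.

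Let $y_k$ solve $y_k'=kf(y_k)$ with $y_k(0)=1/k$, stopped at the first hitting time $T_k$ of $a$. Since $f>0$ on $(0,\bar{a})$ and $f'(0)=\balpha P_M>0$, $y_k$ is non-decreasing and $T_k=O(\log k/k)\to 0$. Fix $\eta\in C^\infty(\rr;[0,1])$ non-decreasing with $\eta\equiv 0$ on $(-\infty,0]$ and $\eta\equiv 1$ on $[\beta/4,\infty)$, and set $Q^{k,\beta,a}(x,t):=y_k(t-t_0)\,\eta(\tilde\Psi_k(x,t))$. A three-case analysis verifies the a.e.\ subsolution property of \eqref{S}: where $\tilde\Psi_k\leq 0$ or $\tilde\Psi_k\geq\beta/4$ one of $\eta,\eta'$ vanishes and the inequality is immediate; in the intermediate strip,
\[
Q_t-DQ\cdot V_k-kf(Q)=y_k'\eta+y_k\eta'[\partial_t\tilde\Psi_k-D\tilde\Psi_k\cdot V_k]-kf(y_k\eta),
\]
and concavity of $f$ with $f(0)=0$ yields $f(y_k\eta)\geq\eta f(y_k)$; using $y_k'\leq kf(y_k)$ and the estimate from the previous paragraph then gives $Q_t-DQ\cdot V_k-kf(Q)\leq -y_k\eta'\alpha/8\leq 0$. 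The single concave corner of $y_k$ at $T_k$ is handled by bounding any smooth supertangent's $t$-derivative above by the left derivative of $Q$. Since $Q$ is Lipschitz in $t$ and $C^1$ in $x$, these verifications imply the viscosity subsolution property.

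Finally, \eqref{Qkbeta initial} follows from $\tilde\Psi_k(\cdot,t_0)=\phi-\beta$ and $y_k(0)=1/k\leq a$ for $k$ large. For $(x,h)$ satisfying \eqref{conditionx}, the locally uniform convergence $\tilde\Psi_k\to\Psi$ near $(x,t_0+h)$ together with $\Psi(x,t_0+h)>\beta$ give $\eta(\tilde\Psi_k)\equiv 1$ on a neighborhood for large $k$, and $y_k(s)=a$ for $s$ near $h$ and $k$ large; hence $Q^{k,\beta,a}\equiv a$ in a neighborhood of $(x,t_0+h)$, yielding $\liminf{}_* Q^{k,\beta,a}(x,t_0+h)=a$. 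The hardest point, as signaled at the outset, is the sign control of $\partial_t\tilde\Psi_k-D\tilde\Psi_k\cdot V_k$ a.e.\ in $t$: this is exactly where the strong $L^1_tL^\infty_x$ convergence of $V_k$ supplied by \eqref{Vk converge} is essential.
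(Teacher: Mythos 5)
Your construction is correct, but it takes a genuinely different route from the paper's. The paper applies Lemma~\ref{lem:psiepsubsoln} with velocity $V_k$ for each $k$, obtaining $\psi^k = \phi + \int (V_k)^{\ep}\cdot D\phi - t\alpha/2 - 2\beta$ as a classical subsolution of the transport part $u_t - Du\cdot V_k = 0$; the $L^1$ convergence of $V_k\to V$ is then used just once, to show $\psi \leq \psi^k$ for $k$ large (absorbing the difference into the extra $\alpha/2$ slack). It then takes a fixed cutoff $q:\rr\to[0,a]$ with $q(-1)=0$, $q(1)=a$, and sets $Q^k = q(k\psi^k)$, so that the transport inequality scales up by the factor $k\dot q \geq 0$ and the reaction term is handled trivially by $f(Q^k)\geq 0$ on $[0,\bar a]$; the switch to the value $a$ is triggered because $k\psi^k > 1$ whenever $\psi > 0$. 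You instead mollify $V$ (not $V_k$), define $\tilde\Psi_k$ by an additive correction $\|D\phi\|_\infty\int_{t_0}^t\|V-V_k\|_\infty$, and take a product $y_k(t-t_0)\eta(\tilde\Psi_k)$ where $y_k$ solves $y_k'=kf(y_k)$ and $\eta$ is a $\beta$-dependent cutoff. Your verification is more delicate: it needs the explicit sign bound $\partial_t\tilde\Psi_k - D\tilde\Psi_k\cdot V_k\leq -\alpha/8$, the concavity $f(\lambda u)\geq\lambda f(u)$, the three-region case analysis, and special handling of the kink of $y_k$ at $T_k$. Both approaches deliver the claimed $\bar h$ (independent of $\beta$, $a$) and the same initial and propagation properties. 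The paper's is leaner — putting the $k$-scaling \emph{inside} the cutoff $q$ lets the transport inequality and the range constraint $Q^k\leq a<\bar a$ do all the work, with no ODE and no concavity argument — but your version works and makes the role of the $L^1_tL^\infty_x$ convergence of $V_k$ very explicit.
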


\begin{proof}[Proof of Lemma \ref{lem:propagation}]
Let us assume, without loss of generality, $t_0=0$. 

Let $(V_k)^\ep$ be the regularization of $V_k$ in space defined in Lemma \ref{lem:psiepsubsoln}. We define  $\psi(x,t)$ and $\psi^k$  by:
\[
\psi(x,t)=\phi(x)+\int_0^{t}D\phi(x)\cdot V( x,s)\, ds -t\alpha -2\beta,
\]
\[
\psi^k(x,t)=\phi(x)+\int_0^{t}D\phi(x)\cdot (V_k)^{\ep}( x,s)\, ds -t\frac{\alpha}{2}-2\beta.
\]
Notice that $\psi^k=\psi^\ep-2\beta$, where $\psi^\ep$ is  as defined in Lemma \ref{lem:psiepsubsoln}. In particular, since, according to Lemma \ref{lem:results perthame}, we have that the $V_k$ satisfy (\ref{bdsV}) and (\ref{hyp:logL}) uniformly in $k$, Lemma \ref{lem:psiepsubsoln} implies that there exists $\ep>0$ and $\bar{h}$ such that for all $k$,  $\psi^\ep$, and therefore $\psi^k$, 
is a viscosity solution of 
\begin{equation}
\label{eqpsik}
\partial_t \psi^k -D\psi^k V_k\leq 0
\end{equation}
on $\rr^n\times (0,\bar{h})$. Moreover, since each of the $V_k$ is continuous in $t$, Lemma \ref{lem:usualsoln} implies that $\psi^k$ satisfies (\ref{eqpsik}) in the classical viscosity sense.

We also recall that, according to Lemma \ref{lem:psiepsubsoln},
\[
|D\phi(x)\cdot V_k( x,s)- D\phi(x)\cdot (V_k)^{\ep}(x,s)|\leq \frac{\alpha}{4}
\]
for all $x$ and for all $s>0$. We use this to estimate the difference in size between $\psi$ and $\psi^k$:
\begin{align*}
\psi(x,h)-\psi^k(x,h)&=\int_0^h (V(x,s)-(V_k)^{\ep}(x,s))\cdot  D\phi(x)\, ds - \frac{\alpha}{2}h\\
&\leq \int_0^h (V(x,s)-V_k(x,s))\cdot  D\phi(x)\, ds - \frac{\alpha}{4}h\\
&\leq ||D\phi||_{\infty} \int_0^h |V(x,s)-V_k(x,s)| \, ds - \frac{\alpha}{4}h.
\end{align*}
Since $V_k\rightarrow V$ in $L^1((0,T), L^\infty(\rr^n)))$, we have that the right-hand side of the previous line is non-positive for all $k$ large enough and for all $x,s$. 
Thus we find,
\begin{equation}
\label{psiandpsik}
\psi(x,s)\leq \psi^k(x,s)
\end{equation}
for all $x$, for all $k$ large enough, and  for all $s>0$.

Next let  us take $q: \rr\rightarrow [0,a]$ to be a smooth non-decreasing function on $\rr$ with $q(-1)=0$ and $q(1)=a$, and define $Q^{k,\beta,a}$ by,
\[
Q^{k,\beta,a}(x,t)=q(k\psi^k(x,t)).
\]
Since $\beta$ and $a$ are fixed throughout this proof, we will no longer write them in the superscript. We remark that $f(Q^k)\geq 0$ holds since $0\leq q(\zeta) <a<\bar{a}$ for all $\zeta$ and $f\geq 0$ on $[0,\bar{a}]$.

We have,
\begin{align*}
Q^k_t-V_k(x,t)\cdot DQ^k &= \dot{q} k \psi^k_t -\dot{q}kV_k(x,t)\cdot D\psi^k\\
&= \dot{q} k [ \psi^k_t -V_k(x,t)\cdot D\psi^k]\\
&\leq 0,
\end{align*}
where the inequality holds for $t\in (0,\bar{h})$, and follows from (\ref{eqpsik}) and because $\dot{q}\geq 0$. Finally, since $f(Q^k)\geq 0$, we find,
\[
Q^k_t-V_k(x,t)\cdot DQ^k \leq kf(Q^k),
\] 
so that $Q^k$ is a subsolution to (\ref{S}) on $(0,\bar{h})$. 

Let us now check the behavior of the $Q^k$ at time 0. The definitions of $Q^k$ and $\psi^k$  yield:
\begin{equation}
\label{Qqtime0}
Q^k(x,0)=q(k (\phi(x)-2\beta)).
\end{equation}
Let us suppose $x$ is such that $\phi(x)<\beta$. This implies that, for all  $k\geq 1/\beta$,
\[
\phi(x)\leq 2\beta - k^{-1}.
\]
 The previous line holds if and only if
\[
k(\phi(x)-2\beta)\leq -1.
\]
Applying $q$, which is non-decreasing, yields,
\[
q(k(\phi(x)-2\beta))\leq q(-1)=0.
\]
Together with (\ref{Qqtime0}), this implies that if $x$ is such that $\phi(x)<\beta$, then $Q^k(x,0)=0$. In addition, $Q^k(x,0)\leq a$ for all $x$ and $t\geq 0$. Thus, we have shown that (\ref{Qkbeta initial}) holds.

Now we study the $\liminf$ of the $Q^k$. Since $q$ is non-decreasing for each $k$, (\ref{psiandpsik})  implies,
\begin{equation}
\label{qpsiqpsik}
q(k\psi(x,t))\leq q(k\psi^k(x,t))=Q^k(x,t),
\end{equation}
where the equality is simply the definition of $Q^k$. 

Now suppose  $(x,t)$ is such that (\ref{conditionx}) holds (with $t_0=0$ and $t$ instead of $h$). This says exactly that $\psi(x,t)>0$, so that $\psi(y,s)>0$ for all $(y,s)\in B_{\tilde{r}}(x,t)$ for some $\tilde{r}\geq 0$. Therefore there exists $K$ such that $k\psi(y,s)>1$  for all $(y,s)\in B_{\tilde{r}}(x,t)$ and for $k\geq K$. Applying $q$, which is non-decreasing, and then using that $q(1)=a$  yields,
\[
q(k\psi(y,s))\geq q(1)=a \text{ for all $(y,s)\in B_{\tilde{r}}(x,t)$.}
\] 
We now use (\ref{qpsiqpsik}) to estimate the left-hand side of the previous line from above and find,
\[
Q^k(y,s)\geq a \text{ for all $(y,s)\in B_{\tilde{r}}(x,t)$.}
\]
 Therefore, taking $\liminf{}_*$ gives,
\[
\liminf_{k\rightarrow\infty}{}_*Q^k(x,t) \geq    a.
\]
\end{proof}

We are now ready for:
\begin{proof}[Proof of item (\ref{item:superflow}) of Proposition \ref{prop:flows}]
Let $\bar{x}\in \rr^n$, $t\in (0,T)$, $r>0$, $\alpha>0$ and let $\phi:\rr^n\rightarrow \rr$ be a smooth function such that 
\[
\{x: \phi(x)\geq 0\}\subset (\Omega^1_t)^{int}\cap B_r(\bar{x}),
\]
with $|D\phi|\neq 0$ on $\{x: \phi(x)=0\}$. Let us use $A_{\beta,h}$ to denote,
\[
A_{\beta,h}=\left\{x| \phi(x)+\int_t^{t+h} V( x,s)\cdot D\phi(x)\, ds -h\alpha> 2\beta\right\}.
\] 
Let $\bar{h}$ be as given by Lemma \ref{lem:propagation}. We fix some $\beta>0$ for the remainder of the proof. We will establish that for $h\in(0,\bar{h})$,
\[
A_{\beta,h}\subset (\Omega^1_{t+h})^{int}.
\]
Since $\beta$ is arbitrary, establishing the previous line will complete the proof.

That $\{x: \phi(x)\geq 0\}$ is contained in $ (\Omega^1_t)^{int}\cap B_r(\bar{x})$ implies that there exists $a>0$ with $\liminf{}_*p_k\geq 2a$ on $\{x: \phi(x)\geq 0\}$, and so, for all $k$ large enough and $x$ such that $\phi(x)\geq 0$, we have $p_k(x,t)\geq a$. Since $p_k\geq 0$ everywhere, we find, for all $k$ large enough, and for all $x$, 
\begin{equation}
\label{p at time t}
p_k(x,t)\geq a\chi_{\{\phi\geq 0\}}(x,t)\geq a\chi_{\{\phi\geq \beta \}}(x,t).
\end{equation}

Let us use $Q^k(x,t)$ to denote $Q^{k, \beta,a}$  as given in Lemma \ref{lem:propagation}. 
According to (\ref{p at time t}) and (\ref{Qkbeta initial}), we have,
\[
p_k(x,t)\geq Q^k(x,t) \text{ for all  }x\in \rr^n.
\] 
In addition, we have that $Q^k$ is a subsolution of (\ref{S}) on $(t,t+\bar{h})$, and $p_k$ is a supersolution of (\ref{S}). Therefore, we have, for all $h\in (0,\bar{h})$,
\[
p_k(x,t+h)\geq Q^k(x,t+h) \text{ for all }x\in\rr^n.
\]
Now let $y\in A_{\beta,h}$. Since $A_{\beta,h}$ is an open set, there exists $\tilde{r}$ such that $B_{\tilde{r}}(y)\subset A_{\beta,h}$. Thus let take $x\in B_{\tilde{r}}(y)$ and take $\liminf_{*}$ of the previous line and find,
\[
\liminf{}_{*}p_k(x,t+h)\geq \liminf{}_{*}Q^k(x,t+h).
\]
Since $x\in A_{\beta,h}$,  Lemma \ref{lem:propagation} implies that the right-hand side of the previous line equals $a$. Therefore, $\liminf_{*}p_k(x,t+h)\geq a>0$, which means $x\in \Omega^1_{t+h}$, and hence  $y\in(\Omega^1_{t+h})^{int}$,  as desired.
\end{proof}

\section{Subflow}
\label{sec:subflow}
In this section we prove item  \ref{item:subflow} of Proposition \ref{prop:flows}. As in the previous section, we will employ the notation (\ref{notation Vk V}). In addition, we will use that, according to (\ref{bdabovereac}), $p_k$ is a subsolution of,
\begin{equation}
\label{S2}
u_t-Du\cdot V_k -ku(1+\balpha)P_M=0.
\end{equation}

We first construct suitable supersolutions to (\ref{S2}) in: 
\begin{lem}
\label{lem:propagation2}
Assume the hypotheses of Theorem \ref{main result}. Let $x_0\in \rr^n$, $t_0\geq 0$, $r>0$, and let $\phi$ be a smooth function with $\{\phi\leq 0\}\subset B_r(x_0)$.   There exists $\bar{h}>0$  and a supersolution $Q_{k}(x,t)$ of (\ref{S2}) in $\rr^n\times (t_0,t_0+\bar{h})$ such that 
\begin{equation}
\label{Qkbeta initial2}
Q_{k}(\cdot, t_0)\geq P_M\chi_{\{\phi>0\}}
\end{equation}
for all $k$. And, if $(x,h)\in B_r(x_0)\times (0,\bar{h})$ is such that 
\begin{equation}
\label{conditionx2}
\phi(y)+\int_{t_0}^{t_0+h'}V(y, s)\cdot D\phi(y)\, ds +h'\alpha <0 \text{ holds for } (y,h')\in \bar{B}_{\tilde{r}}(x,h)
\end{equation}
 for some $\tilde{r}>0$, then there exists $K$ such that
\[
Q_{k}(y,t_0+h')=0 \text{ for $(y,h')\in\bar{B}_{\tilde{r}}(x,h)$ for all }k\geq K.
\]
\end{lem}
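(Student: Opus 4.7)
The plan is to mimic the construction of $Q^{k,\beta,a}$ in Lemma \ref{lem:propagation}, but reverse the direction of the traveling barrier and multiply by an exponential factor $e^{k(1+\balpha)P_M(t-t_0)}$ to absorb the $ku(1+\balpha)P_M$ reaction term appearing in (\ref{S2}). Fixing some auxiliary $\alpha>0$ and writing $(V_k)^{\ep}$ for the spatial mollification of $V_k$ as in Lemma \ref{lem:psiepsubsoln}, I would set
\[
\tilde{\psi}^k(x,t) = \phi(x) + \int_{t_0}^{t} (V_k)^{\ep}(x,s)\cdot D\phi(x)\, ds + (t-t_0)\tfrac{\alpha}{2}.
\]
Since the $V_k$ satisfy (\ref{bdsV}), (\ref{hyp:logL}), (\ref{hyp:L1}) uniformly in $k$, Lemma \ref{lem:psiepsubsoln} yields $\ep>0$ and $\bar{h}>0$ (depending only on $\alpha$, $\|\phi\|_{C^3}$ and $M$) such that each $\tilde{\psi}^k$ is a classical viscosity supersolution of $u_t - Du\cdot V_k=0$ on $\rr^n\times(t_0,t_0+\bar{h})$. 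Picking a smooth non-decreasing cutoff $q:\rr\to[0,1]$ with $q\equiv 0$ on $(-\infty,-1]$ and $q\equiv 1$ on $[1,\infty)$, I would then define
\[
Q_k(x,t) = P_M\, q\!\left(k\tilde{\psi}^k(x,t)+2\right)\exp\!\bigl(k(1+\balpha)P_M(t-t_0)\bigr).
\]

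A direct computation gives
\[
(Q_k)_t - DQ_k\cdot V_k = P_M k\,\dot{q}\bigl((\tilde{\psi}^k)_t - V_k\cdot D\tilde{\psi}^k\bigr)e^{k(1+\balpha)P_M(t-t_0)} + k(1+\balpha)P_M\, Q_k,
\]
and the first term is non-negative because $\dot{q}\geq 0$ and $\tilde{\psi}^k$ is a supersolution of the linear transport equation, so $Q_k$ is a (classical hence viscosity) supersolution of (\ref{S2}). At $t=t_0$ we have $\tilde{\psi}^k(\cdot,t_0)=\phi$, so on $\{\phi>0\}$ the argument of $q$ exceeds $1$ and $Q_k(\cdot,t_0)=P_M$, giving (\ref{Qkbeta initial2}).

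For the vanishing conclusion, I would compare $\tilde{\psi}^k$ to the limiting barrier
\[
\tilde{\psi}(x,t) = \phi(x) + \int_{t_0}^{t} V(x,s)\cdot D\phi(x)\, ds + (t-t_0)\alpha,
\]
using (\ref{H^ep minusH}) and the convergence $V_k\to V$ in $L^1((0,T),L^\infty_{loc}(\rr^n))$ from Lemma \ref{lem:results perthame} to conclude $\tilde{\psi}^k\leq \tilde{\psi}$ on $B_r(x_0)\times(t_0,t_0+\bar{h})$ for all $k$ sufficiently large, exactly as in the parallel step of Lemma \ref{lem:propagation}. If $(x,h)$ satisfies (\ref{conditionx2}), compactness of $\bar{B}_{\tilde{r}}(x,h)$ and continuity of $\tilde{\psi}$ give $\eta>0$ with $\tilde{\psi}\leq -\eta$ on that set, so $\tilde{\psi}^k\leq -\eta$ and $k\tilde{\psi}^k+2\leq -1$ for all large $k$, forcing $q\equiv 0$ and hence $Q_k\equiv 0$ on $\bar{B}_{\tilde{r}}(x,h)$. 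The main obstacle relative to Lemma \ref{lem:propagation} is the positive sign of the reaction term in (\ref{S2}): it prevents one from simply dropping it as was done for (\ref{S}), and is precisely what forces the introduction of the exponential factor. The only worry this creates -- that the factor might blow up in $k$ and destroy the vanishing statement -- is harmless, because in the region where vanishing is needed the cutoff $q$ is identically zero and annihilates the exponential.
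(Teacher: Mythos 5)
Your proposal is correct and takes essentially the same approach as the paper: build the traveling barrier $\bar\psi^k$ from Lemma~\ref{lem:psiepsubsoln} with the sign of $\alpha$ reversed, compose it with a monotone cutoff $q$ scaled by $k$, and multiply by an exponential-in-$k$ factor to absorb the positive reaction term of (\ref{S2}). The only differences from the paper's proof are cosmetic normalizations of $q$ (the paper uses a cutoff with $q(-1)=0$, $q(0)=P_M$ and no additive shift, you use a $[0,1]$-valued cutoff with argument shifted by $+2$ and an external factor $P_M$), and you also correctly write the exponent as $k(1+\balpha)P_M(t-t_0)$ to match the coefficient in (\ref{S2}), whereas the paper writes $k(1+\balpha P_M)t$, which appears to be a small typo.
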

\begin{proof}
The proof of this lemma is similar to that of Lemma \ref{lem:propagation}, so we omit some details. Without loss of generality we take $t_0=0$, and we write $t$ instead of $h$. We take $\bar{\psi}^k$ as in Lemma \ref{lem:psiepsubsoln} and $\bar{\psi}$ to be:
\[
\bar{\psi}(x,t)=\phi(x)+\int_0^{t}D\phi(x)\cdot V( x,s)\, ds +t\alpha,
\]
\[
\bar{\psi}^k(x,t)=\phi(x)+\int_0^{t}D\phi(x)\cdot (V_k)^{\ep}( x,s)\, ds +t\frac{\alpha}{2}.
\]
As in Lemma \ref{lem:propagation}, we find that  there exists $\ep>0$ and $\bar{h}$ such that for all $k$,  $\bar{\psi}^k$
is a viscosity solution (in the classical sense) of 
\begin{equation}
\label{eqpsik2}
\partial_t \bar{\psi}^k -D\bar{\psi}^k V_k\geq 0
\end{equation}
on $\rr^n\times (0,\bar{h})$. And, again similarly to Lemma \ref{lem:propagation}, we find
\begin{equation}
\label{psiandpsik2}
\bar{\psi}(x,s)\geq \bar{\psi}^k(x,s)
\end{equation}
for all $x$ and for all $s>0$.

Next let  us take $q: \rr\rightarrow [0,a]$ to be a smooth non-decreasing function on $\rr$ with $q(-1)=0$ and $q(0)=P_M$, and define $Q_{k}$ by,
\[
Q_{k}(x,t)=q(k\bar{\psi}^k(x,t))e^{k(1+\bar{\alpha}P_M)t}.
\]

We have,
\begin{align*}
\partial_tQ_k-V_k(x,t)\cdot DQ_k &= k(1+\bar{\alpha}P_M) Q_{k} +\dot{q} k \bar{\psi}^k_t -\dot{q}kV_k(x,t)\cdot D\bar{\psi}\\
&= k(1+\bar{\alpha}P_M) Q_{k}+\dot{q} k [ \bar{\psi}^k_t -V_k(x,t)\cdot D\bar{\psi}]\\
&\geq k(1+\bar{\alpha}P_M) Q_{k},
\end{align*}
where the last line holds for $t\in (0,\bar{h})$, and follows from (\ref{eqpsik2}) and because $\dot{q}\geq 0$. Thus we find that $Q_k$ is a supersolution to (\ref{S2}) on $(0,\bar{h})$.

Let us now check the behavior of the $Q^k$ at time 0. The definitions of $Q_k$ and $\bar{\phi}^k$  yield:
\begin{equation}
\label{Qqtime02}
Q^k(x,0)=q(k \bar{\phi}(x)).
\end{equation}
Let us suppose $x$ is such that $\bar{\phi}(x)>0$. Applying $q$, which is non-decreasing, yields,
\[
q(k\bar{\phi}(x))\geq q(0)=P_M.
\]
Together with (\ref{Qqtime02}), this implies that if $x$ is such that $\bar{\phi}(x)>0$, then $Q^k(x,0)=P_M$. In addition, $Q^k(x,0)\geq 0$ for all $x$ and $t\geq 0$. Thus, we have shown that (\ref{Qkbeta initial2}) holds.

Now suppose  $(x,t)$ is such that (\ref{conditionx2}) holds  on $\bar{B}_{\tilde{r}}(x,t)$ for some $\tilde{r}$. This says exactly that $\bar{\phi}(y,s)<0$ on $\bar{B}_{\tilde{r}}(x,t)$. Since $\bar{\phi}$ is continuous, there exists $\alpha>0$ so that 
\[
\bar{\phi}(y,s)<-\alpha \text{ for }(y,s)\in\bar{B}_{\tilde{r}}(x,t), 
\]
and hence there exists $K$ so that for $k\geq K$,
\[
k\bar{\phi}(y,s)\leq -1 \text{ for }(y,s)\in\bar{B}_{\tilde{r}}(x,t).
\]
Applying $q$ yields,
\[
q(k\bar{\phi}(y,s))\leq q(-1)=0,
\]
so that, upon multiplying by $e^{k(1+\bar{\alpha}P_M)s}$ we find,
\[
Q_k(y,s)=q(k\bar{\phi}(y,s))e^{k(1+\bar{\alpha}P_M)s}\leq 0,
\]
and therefore $Q_k(y,s)=0$ for $k\geq K$ and $(y,s)\in \bar{B}_{\tilde{r}}(x,t)$.
\end{proof}

We are now ready to present:
\begin{proof}[Proof of item (\ref{item:subflow}) of Proposition \ref{prop:flows}]
The claim of this proposition is that $((\bar{\Omega}^2_t)^c)_t$ is a generalized subflow. Let us recall that $\bar{\Omega}_t^c$ appears in the definition of $\Omega$ being a generalized subflow. Taking $\Omega_t = ((\bar{\Omega}^2_t)^c)_t$ yields
\[
\bar{\Omega}_t^c = (\Omega^2_t)^{int}.
\]
Thus, let us take $\bar{x}\in \rr^n$, $t_0\in (0,T)$, $r>0$, $\alpha>0$, and a smooth function $\phi$ such that 
\begin{equation*}
\{x: \phi(x)\leq 0\}\subset (\Omega^2_t)^{int}\cap B_r(\bar{x}),
\end{equation*} 
and   with $|D\phi|\neq 0$  on $\{\phi=0\}$. Let us use $E_{h}$ to denote,
\[
E_{h}=\left\{x| \phi(x)+\int_t^{t+h} V( x,s)\cdot D\phi(x)\, ds +h\alpha< 0\right\}.
\] 
Let $\bar{h}$ be as given by Lemma \ref{lem:propagation2}.  We will establish $E_{h}\subset (\Omega^2_{t+h})^{int}$ for  for $h\in(0,\bar{h})$.

That $\{x: \phi(x)\leq 0\}$ is contained in $ (\Omega^2_t)^{int}\cap B_r(\bar{x})$ implies that there exists $K$ such that  $p_k(x,t)=0$ on $\{x: \phi(x)\leq 0\}$ for all $k\geq K$. Since $p_k\leq P_M$ everywhere (this is exactly equation (\ref{bd pk}) of Lemma \ref{lem:results perthame}), we find, for all $k\geq K$, and for all $x$, 
\begin{equation}
\label{p at time t2}
p_k(x,t)\leq P_M\chi_{\{\phi > 0\}}(x,t).
\end{equation}

Let $Q_{k}$ be as given in Lemma \ref{lem:propagation2}. 
According to (\ref{p at time t2}) and (\ref{Qkbeta initial2}), we have,
\[
p_k(x,t)\leq Q_k(x,t) \text{ for all  }x\in \rr^n.
\] 
In addition, we have that $Q^k$ is a supersolution of (\ref{S2}) on $(t,t+\bar{h})$, and $p_k$ is a subsolution of (\ref{S2}). Therefore, we have, for all $h\in (0,\bar{h})$,
\[
p_k(x,t+h)\leq Q^k(x,t+h) \text{ for all }x\in\rr^n.
\]

Now let $x\in E_{h}$. Since 
\[
(x,h)\mapsto \phi(x)+\int_t^{t+h} V( x,s)\cdot D\phi(x)\, ds +h\alpha
\]
is continuous, we find there exists $\tilde{r}$ such that $y\in E_{h'}$ for $(y,h')\in \bar{B}_{\tilde{r}}(x,h)$.  Thus, according to Lemma \ref{lem:propagation2}, there exists $K$ such that for all $k\geq K$ and all $(y,h')\in \bar{B}_{\tilde{r}}(x,h)$, we have $Q_k(y,t+h')=0$. The previous line therefore implies 
\[
p_k(y,t+h') = 0 \text{ for }(y,h')\in \bar{B}_{\tilde{r}}(x,h), k\geq K.
\]
We recall $A_{t+h'}^k=\{x|p_k(x,t+h')>0\}$, so from the previous line we find,
\[
dist(y,A^k_{t+h'})\geq \tilde{r}/2 \text{ for  }(y,h')\in B_{\tilde{r}/2}(x,h), k\geq K,
\]
and hence 
\[
\liminf{}_*dist(y,A^k_{t+h})>0 \text{ for  }y\in B_{\tilde{r}/4}(x).
\]
This means exactly $y\in (\Omega^2_{t+h})^{int}$. Thus $E_{h}\subset(\Omega^2_{t+h})^{int}$, as desired.
\end{proof}

\section{Limiting behavior at the initial time}
\label{sec:time 0}
We established that, for $t>0$, $(\Omega_t^1)^{int}$ is a superflow, and $(\bar{\Omega_t^2})^c$ is a subflow, with  velocity $-DW_\infty$. In this section we will study these sets at the initial time $t=0$.

\begin{prop}
\label{prop:inittime}
Under the assumptions of Theorem \ref{main result}, 
\begin{enumerate}
\item $\Omega_0^{int}\subset (\Omega_0^1)^{int}$, and 
\item $(\bar{\Omega_0})^c \subset (\Omega^2)^{int}_0$.
\end{enumerate}
\end{prop}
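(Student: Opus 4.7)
Item (1) is essentially immediate. The first part of hypothesis (\ref{initial}) states exactly that $\liminf{}_* p_k(x,0) > 0$ for every $x \in \Omega_0$, which by the definition of $\Omega^1_0$ means $\Omega_0 \subset \Omega^1_0$. Since the interior operation is monotone, $\Omega_0^{int} \subset (\Omega^1_0)^{int}$.

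Item (2) requires a finite propagation speed statement near $t=0$, and I would extract this from Lemma \ref{lem:propagation2} applied at $t_0 = 0$. Fix $\bar{x} \in (\bar{\Omega}_0)^c$ and choose $r > 0$ so that $\bar{B}_{2r}(\bar{x}) \subset (\bar{\Omega}_0)^c$ and $2r$ is less than the gap $\delta$ provided by the second part of (\ref{initial}); this guarantees $p_k(\cdot, 0) \equiv 0$ on $B_{2r}(\bar{x})$ for all $k$ large. Pick a smooth $\phi$ with $\{\phi \leq 0\} \subset B_{2r}(\bar{x})$, $\{\phi < 0\} \supset \bar{B}_r(\bar{x})$, and $|D\phi| \neq 0$ on $\{\phi = 0\}$, and invoke Lemma \ref{lem:propagation2} (which permits $t_0 = 0$) to obtain $\bar{h} > 0$ and a supersolution $Q_k$ of (\ref{S2}) on $\rr^n \times (0, \bar{h})$ with $Q_k(\cdot, 0) \geq P_M \chi_{\{\phi > 0\}}$.

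Combining $p_k(\cdot, 0) \leq P_M$ (from (\ref{bd pk})) and the vanishing of $p_k(\cdot, 0)$ on $\{\phi \leq 0\}$ gives $p_k(\cdot, 0) \leq Q_k(\cdot, 0)$. The comparison principle for (\ref{S2}) -- a linear first-order equation, which after the substitution $\tilde{u} = u\, e^{-k(1+\balpha)P_M t}$ becomes a pure transport equation with bounded Lipschitz drift $V_k$ -- then yields $p_k \leq Q_k$ on $\rr^n \times (0, \bar{h})$. The second part of Lemma \ref{lem:propagation2}, applied at test points $(\bar{x}, h)$ for small $h > 0$ and using that $h \mapsto \phi(\bar{x}) + \int_0^h V(\bar{x}, s) \cdot D\phi(\bar{x})\,ds + h\alpha$ is continuous and strictly negative at $h = 0$, forces $Q_k \equiv 0$ on small space-time balls $\bar{B}_{\tilde{r}}(\bar{x}, h)$, with $\tilde{r} > 0$ independent of small $h$ and for all $k$ large. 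A routine covering argument over $h$ then produces $\rho, h_0 > 0$ with $p_k \equiv 0$ on $\bar{B}_\rho(\bar{x}) \times [0, h_0]$ for all $k$ sufficiently large.

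From here item (2) is direct: for $y \in B_{\rho/2}(\bar{x})$ and $(z, s)$ near $(y, 0)$ with $|(z, s) - (y, 0)| < \min(\rho/2, h_0)$, we have $(z, s) \in \bar{B}_\rho(\bar{x}) \times [0, h_0]$, so $p_k(z, s) = 0$ for large $k$; hence $dist(y, A^k_s) \geq \rho/2$ and thus $\liminf{}_* dist(y, A^k_0) > 0$, i.e.\ $y \in \Omega^2_0$. This shows $B_{\rho/2}(\bar{x}) \subset \Omega^2_0$, so $\bar{x} \in (\Omega^2_0)^{int}$. The main obstacle is the finite-propagation-speed barrier construction near $t = 0$; once it is delegated to Lemma \ref{lem:propagation2}, what remains is the routine covering argument to extend the vanishing region of $Q_k$ all the way down to the initial time.
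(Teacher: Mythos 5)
For item (1), your observation that the hypothesis (\ref{initial}) literally matches the defining property of $\Omega^1_0$ is formally consistent with Definition \ref{def:liminf}, but it renders this half of the proposition vacuous, which should be a warning sign. Look at what the paper actually does: it extracts from (\ref{initial}) only the time-zero information $p_k(\cdot, 0)\geq a$ on $B_{2r}(x_0)$, and then uses the ODE barrier of Lemma~\ref{lemODE} to propagate positivity forward in time. The intended (and sensible) reading of (\ref{initial}) is as a constraint on the initial traces $p_k(\cdot,0)$ alone; the spacetime $\liminf{}_*$ at $(x,0)$ required for membership in $\Omega^1_0$ probes $p_j(y,s)$ at small $s>0$, and this is genuinely something to prove. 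It requires the PDE: since the reaction term in (\ref{transport_k_pressure}) is $O(k)$, the $p_k$ could a priori drop to zero instantaneously, and only the sign structure of $f$ near $0$ prevents this. Your argument skips this step entirely.

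For item (2), routing through Lemma~\ref{lem:propagation2} at $t_0=0$ is a genuinely different approach from the paper's (which builds the cone barrier $\bar{v}(x,t)=\phi(|x-x_0|+Mt)$ of Lemma~\ref{lem:barrier initial time}), and the comparison $p_k\leq Q_k$ is correct. But the ``routine covering argument'' is the crux of the matter and is not routine: Lemma~\ref{lem:propagation2} delivers $Q_k\equiv 0$ only on balls $\bar{B}_{\tilde r}(x,h)$ with center time $h>0$, for $k\geq K$ with $K$ a priori depending on the ball. What you need is $p_k\equiv 0$ on the whole cylinder $\bar{B}_\rho(\bar{x})\times[0,h_0]$ for a single $K$, because $\liminf{}_*\,dist(\cdot,A^k_0)$ probes $p_j(z,s)$ at all $s\geq 0$ down to $0$. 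To cover points arbitrarily close to $t=0$ the balls must shrink (one needs $\tilde r\leq h$ to stay in $\{s\geq 0\}$), so no finite subcover reaches the bottom slice, and nothing in the \emph{statement} of Lemma~\ref{lem:propagation2} controls the blow-up of $K$. The gap can be closed only by opening the proof of Lemma~\ref{lem:propagation2} and observing that $K$ depends solely on a lower bound for $-\bar{\psi}$ on the region, which is uniform over the cylinder; your argument delegates to the lemma's statement and hence does not contain this ingredient. The paper's cone construction avoids the issue entirely by producing a supersolution that vanishes identically on a spacetime cylinder including $t=0$.
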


\subsection{Positive region}
We prove the first part of Proposition \ref{prop:inittime}. We use $f$ to denote,
\[
f(u)=u( \alpha P_M -(1+\alpha)u).
\]
We will construct a barrier from the solution to the ODE described in the following lemma: 
\begin{lem}
\label{lemODE}
For each $\xi\in [0,\infty)$, there exists a unique solution $\omega(\xi,t)$ of the ODE,
\[
\omega_t=f(\omega) \text{ for }t\in [0,\infty); \, \, \omega(0)=\xi,
\]
and with $\omega_\xi(\xi, s)>0$ and $w(\zeta, s)>0$ in $(0,\infty)\times [0,\infty)$.
\end{lem}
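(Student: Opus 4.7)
The plan is to treat this as a routine ODE lemma using standard Picard--Lindel\"of theory, together with classical results on smooth dependence on initial conditions. Since $f(u)=u(\bar\alpha P_M-(1+\bar\alpha)u)$ is a polynomial and hence smooth, it is locally Lipschitz, so for each $\xi\in[0,\infty)$ Picard--Lindel\"of gives a unique local-in-time solution $\omega(\xi,\cdot)$.

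To upgrade local to global existence I would establish an a priori bound. Observe that $f(0)=0$ and $f(\bar a)=0$ where $\bar a=\bar\alpha P_M/(1+\bar\alpha)$, with $f>0$ on $(0,\bar a)$ and $f<0$ on $(\bar a,\infty)$. By uniqueness, $\omega\equiv 0$ and $\omega\equiv \bar a$ are the only constant solutions, and any non-constant trajectory cannot cross them. Hence for $\xi\in(0,\bar a]$ the solution stays in $(0,\bar a]$, while for $\xi>\bar a$ the solution is decreasing and trapped in $[\bar a,\xi]$. In either case $\omega(\xi,t)$ stays in a bounded set, ruling out blow-up and giving existence on all of $[0,\infty)$. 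For $\xi=0$, $\omega\equiv 0$ is the unique solution.

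For the positivity $\omega(\xi,s)>0$ on $(0,\infty)\times[0,\infty)$: the trivial solution $\omega\equiv 0$ together with uniqueness for the Cauchy problem forces any trajectory starting at $\xi>0$ to remain strictly positive for all $t\geq 0$, since otherwise two distinct solutions would coincide at some $(x,t)$. For the monotonicity $\omega_\xi>0$, I would invoke the standard smooth-dependence-on-initial-conditions theorem (valid because $f\in C^1$), which gives that $v(t):=\omega_\xi(\xi,t)$ exists, is continuous, and satisfies the variational equation
\[
v_t = f'(\omega(\xi,t))\,v, \qquad v(0)=1.
\]
Solving this linear scalar ODE yields
\[
\omega_\xi(\xi,t) = \exp\!\left(\int_0^t f'(\omega(\xi,s))\,ds\right) > 0
\]
for all $(\xi,t)\in(0,\infty)\times[0,\infty)$.

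There is no real obstacle here; the lemma is a textbook application of Picard--Lindel\"of plus the variational equation. The only point requiring any care is the a priori bound ensuring global existence, which is immediate from the shape of the logistic-type nonlinearity $f$.
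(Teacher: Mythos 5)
Your proof is correct but takes a different route from the paper. The paper simply writes down the explicit logistic solution,
\[
\omega(\xi,t)=\frac{\bar\alpha P_M}{1+\bar\alpha+(\bar\alpha P_M\xi^{-1}-(1+\bar\alpha))e^{-\bar\alpha P_M t}} \quad (\xi>0),
\]
together with $\omega(0,t)\equiv 0$, and notes that all the claimed properties (uniqueness, global existence, positivity, strict monotonicity in $\xi$) can be verified directly from this formula. You instead argue abstractly: Picard--Lindel\"of for local existence and uniqueness, phase-line trapping between the equilibria $0$ and $\bar a=\bar\alpha P_M/(1+\bar\alpha)$ for global existence and positivity, and the variational equation $v_t=f'(\omega)v$, $v(0)=1$, to get $\omega_\xi=\exp\bigl(\int_0^t f'(\omega)\,ds\bigr)>0$. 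The paper's route is shorter once one recognizes the logistic form, and it delivers explicit decay rates for free; your route is more robust (it would work verbatim if $f$ were replaced by any $C^1$ function with the same sign pattern and two simple roots) and avoids any computation. Both are complete; there is no gap in your argument.
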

\begin{proof}
This ODE has the solution: $\omega(0,t)\equiv 0$, and for $\xi>0$ we have,
\[
\omega(\xi,t)=\frac{  \balpha P_M}{1+\balpha+( \balpha P_M\zeta^{-1}-(1+\balpha))e^{-\balpha   P_M t}}.
\]
From this we can explicitly verify that $\omega$ has the desired property.
\end{proof}
\begin{proof}[Proof of first part of Proposition \ref{prop:inittime}]
According to (\ref{reaction term bd below}), we have that $p_k$ is a supersolution of,
\[
u_t-V_k\cdot Du = k f(u).
\]

Let $x_0\in (\Omega_0)^{int}$, so that there exists $r>0$ with $B_{2r}(x_0)\subset \Omega_0$.  Assumption (\ref{initial}) implies that there exists $a>0$ such that, for $k$ large enough and $x\in B_{2r}(x_0)$, we have $p_k(x,0)\geq a$. Let $\psi$ be a smooth function with,
 \[
0\leq \psi\leq a \text{ in }\rr^n,\,  \,  \psi=0 \text{ on } \rr^n\setminus B_{2r}(x_0),\, \, \psi=a \text{ on }B_r(x_0).
\]
We remark that $\psi$ is chosen so that,
\begin{equation}
\label{psi and uk}
\psi(x)\leq p_k(x,0) \text{ for all }x\in \rr^n \text{ and all $k$ large enough.}
\end{equation}
Let $\omega(\xi, t)$ be as in Lemma \ref{lemODE}. We define $w_k$ by,
\[
w_k(x,t)= \omega( (\psi(x)-K t)^+, kt),
\]
where we take $K$ to be $K=1/(M\sup{|D\psi|})$. 

We have,
\[
\partial_tw_k -V_k\cdot Dw_k -kf(w_k)=-K\omega_\xi+k\omega_t- V_k\cdot (\omega_\xi D\psi)-kf(\omega).
\]
The sum of the second and fourth terms is zero, due to the ODE that $\omega$ satisfies. Thus we find,
\[
\partial_tw_k +V_k\cdot Dw_k -kf(w_k)=\omega_\xi(-K-V_k\cdot D\psi).
\]
Since $\omega_\xi>0$, our choice of $K$ implies that the right-hand side of the previous line is non-positive, and thus $w_k$ is a subsolution to the equation for $u_k$. In addition, at time 0 we have,
\[
w_k(x,0)= \omega(\psi(x), 0)=\psi(x).
\]
Together with (\ref{psi and uk}), this implies that, for all $k$ large enough, $w_k(x,0)\leq p_k(x,0)$ for all $x\in \rr^n$. The comparison principle thus implies that for all $k$ large enough,
\begin{equation}
\label{wlequ}
w_k(x,t)\leq p_k(x,t) \text{ for all }(x,t)\in \rr^n\times (0, \infty).
\end{equation}

We will now establish an appropriate bound from below on $w_k$, and then use (\ref{wlequ}) to deduce the desired estimate on the limit of the $p_k$.

By definition of $\psi$, we have $\psi(x)=a$ for $x\in B_r(x_0)$. Together with the definition of $w_k$ this implies, for $x\in B_r(x_0)$,
\[
w_k(x,t)=\omega((a - Kt)^+, kt). 
\]
For $t\leq a K^{-1}/2$, we have $a - Kt\geq a/2$. Since $\omega$ is non-decreasing in $\xi$, we  find that for $x\in B_r(x_0)$ and for $t\leq a K^{-1}/2$,
\[
\omega((a - Kt)^+, kt)\geq \omega(a/2, kt). 
\] 
In addition, since  $\omega$ is non-decreasing in $t$, we have, for all $t$,
\[
\omega(a/2, kt)\geq \omega(a/2, 0) =a/2,
\]
where the equality follows since $\omega$ satisfies $\omega(\xi, 0)=\xi$. Putting the three previous lines together thus yields,  for $x\in B_r(x_0)$ and for $t\leq a K^{-1}/2$,
\[
w_k(x,t)\geq a/2.
\]
We use this to bound the left-hand side of (\ref{wlequ}) from below and find,
\[
a/2\leq p_k(x,t) \text{ for $ x\in B_r(x_0)$ and for $t\leq a K^{-1}/2$}.
\] 
Thus we have for $x\in B_{r/2}(x_0)$,
\[
\liminf{}_*p_k(x_0,0)\geq a/2>0,
\]
which means $x_0\in (\Omega_0^1)^{int}$, as desired.
\end{proof}

\subsection{Zero region}

In this subsection we study the behavior of $\limsup{}^*p_k$ at time zero and establish the second part of Proposition \ref{prop:inittime}. We devote the following lemma to the construction of a necessary barrier. 
\begin{lem}
\label{lem:barrier initial time}
Let   $r>0$ and $x_0\in \rr^n$. The equation
\[
v_t-M|Dv|=0
\]
has a solution $\bar{v}(x,t)$ on $B_{2r}(x_0)\times [0,\frac{r}{2M}]$ that satisfies,
\[
\bar{v}(x,0)\equiv 0 \text{ on }B_{r}(x_0),
\]
\[
\bar{v}(x,t) = P_M \text{ on }\bdry B_{2r}(x_0)\times [0,\frac{r}{2M}],
\]
\[
\bar{v}(x,t)\equiv 0 \text{ on }B_{r/2}(x_0)\times (0,\frac{r}{2M}).
\]
\end{lem}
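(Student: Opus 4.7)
The plan is to give $\bar v$ by a single explicit formula and verify all four assertions directly. I would set
\[
\bar v(x,t) := P_M\,\min\!\left(1,\,\frac{(|x-x_0|-r+Mt)_+}{r}\right).
\]
Equivalently, with $v_0(x):=P_M\min\bigl(1,(|x-x_0|-r)_+/r\bigr)$, this is the envelope $\bar v(x,t)=\sup_{|y-x|\le Mt} v_0(y)$, i.e.\ the standard representation of the (unique) viscosity solution of $v_t-M|Dv|=0$ with initial datum $v_0$.

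The three boundary/initial conditions follow by inspection of the formula. At $t=0$ with $|x-x_0|\le r$, the quantity $(|x-x_0|-r+Mt)_+$ vanishes, so $\bar v(x,0)=0$ on $B_r(x_0)$. On $\partial B_{2r}(x_0)\times[0,r/(2M)]$, the argument of the minimum equals $(r+Mt)/r\ge 1$, hence $\bar v=P_M$. For $|x-x_0|<r/2$ and $0<t<r/(2M)$, one has $|x-x_0|-r+Mt<r/2-r+r/2=0$, so $\bar v=0$.

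For the PDE itself, I would write $\bar v(x,t)=\Psi(s)$ with $s:=|x-x_0|-r+Mt$ and $\Psi(s):=P_M\min(1,(s/r)_+)$. In each of the three smooth regions $\{s<0\}$, $\{0<s<r\}$, $\{s>r\}$ a direct computation gives $\bar v_t=M\Psi'(s)$ and $|D\bar v|=\Psi'(s)$, so the PDE holds classically. The only potentially delicate step is the viscosity check at the two kinks $s=0,r$; it collapses to a one-variable question because $\bar v$ depends on $(x,t)$ only through $s$. Specifically, a short first-order expansion shows that in one of the sub/super tests at each kink the admissible test gradients $(D\phi,\phi_t)$ are forced to be scalar multiples of $\bigl((x-x_0)/|x-x_0|,M\bigr)$ with coefficient in $[0,P_M/r]$, on which $\phi_t-M|D\phi|=aM-Ma=0$; in the remaining test at each kink no smooth test function realises the required one-sided touching, so the inequality is vacuous. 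Alternatively, the PDE can simply be cited from standard Hamilton-Jacobi theory through the envelope representation above, bypassing the kink computation entirely. No genuine obstacle is expected.
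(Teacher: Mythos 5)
Your construction uses the same traveling-profile ansatz as the paper: both set $\bar v(x,t)=\Psi(|x-x_0|+Mt+\text{const})$ for a nondecreasing profile $\Psi$, so that $\bar v_t = M\Psi'$ and $|D\bar v|=\Psi'$ cancel. The difference is in the choice of profile. The paper takes $\Psi$ smooth (a mollified ramp) with $\Psi\equiv 0$ on $(-\infty,r]$ and $\Psi\equiv P_M$ on $[2r,\infty)$; this makes $\bar v$ a classical $C^1$ function on the whole cylinder (the flat plateau near $x_0$ absorbs the singularity of $|x-x_0|$), so the PDE holds in the ordinary pointwise sense and no viscosity discussion is needed. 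You instead take $\Psi$ piecewise linear and therefore must run the viscosity test at the two kinks. Your sketch of that test is correct: at the convex kink ($s=0$) no smooth function can touch from above so the subsolution test is vacuous, while a smooth touching from below has gradient $\lambda\nabla s=\lambda\bigl((x-x_0)/|x-x_0|,M\bigr)$ with $\lambda\in[0,P_M/r]$, giving $\phi_t-M|D\phi|=\lambda M-M\lambda=0$; symmetrically at the concave kink $s=r$. The boundary and initial conditions check out by inspection in both versions. The only thing your approach buys is a fully explicit formula (and a clean Hopf--Lax interpretation); what it costs is the kink argument, which the paper sidesteps entirely by smoothing the profile. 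If you keep your version, I'd make the kink reasoning slightly more precise (identify the subdifferential of $\max(0,s)\circ s(x,t)$ rather than gesturing at a first-order expansion), or simply invoke the Hopf--Lax representation as you suggest. If you want the shortest path, switch to the paper's smooth profile and the entire PDE check becomes a one-line classical computation.
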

\begin{proof}[Proof of Lemma \ref{lem:barrier initial time}]
Let $\phi:\rr\rightarrow [0, P_M]$ be a smooth and non-decreasing function satisfying,
\[
\phi(\xi)=0\text{ for }\xi\leq r, \   
\phi(\xi)=P_M \text{ for }\xi\geq 2r,
\]
and define,
\[
\bar{v}(x,t)=\phi(|x-x_0|+Mt).
\]
We have,
\[
D_x\bar{v}(x,t)=\phi'(|x-x_0|+Mt)\frac{x-x_0}{|x-x_0|}
\]
and 
\[
\bar{v}_t(x,t)=M\phi'(|x-x_0|+Mt),
\]
so that
\[
\bar{v}_t(x,t)-M|D\bar{v}(x,t)|= \phi'(|x-x_0|+Mt)(M - M)=0.
\]
We also remark that, although $|x-x_0|+Mt$ is not differentiable at $x=x_0$, we still have that $\bar{v}(x,t)$ is differentiable at $x_0$ so long as $Mt \leq r$, which is the case in the region we consider.

Let us now verify that $\bar{v}$ satisfies the desired properties. 

If $x\in B_r(x_0)$, then $\bar{v}(x,0)=\phi(|x-x_0|)\leq \phi(r)=0$. Therefore $\bar{v}(x,0)\equiv 0$ on $B_r(x_0)$. 

Now let us take $x\in \bdry B_{2r}(x_0)$. We have,
\[
\bar{v}(x,t)=\phi(2r+Mt)\geq \phi(2r)=P_M,
\]
where the inequality follows because $\phi$ is non-decreasing and holds for any $t>0$. Thus, $\bar{v}(x,t) \equiv P_M$ on $\bdry B_{2r}(x_0)\times [0,\frac{r}{2M}]$.

For the third property, we note that for $x\in B_{r/2}(x_0)$ and $ t\in (0, r/2M)$ we have
\[
|x-x_0|+Mt\leq r/2+r/2\leq r,
\]
so $\bar{v}(x,t)=\phi(|x-x_0|+Mt)\leq \phi(r)=0$.

\end{proof}

\begin{proof}[Proof of second part of Proposition \ref{prop:inittime}]
Let us take $x_0\in (\bar{\Omega_0})^c$. Let $r>0$ be such that $\bar{B}_{2r}(x_0)\subset (\bar{\Omega_0})^c$. Our assumption (\ref{initial}) on $p_k(x,0)$ implies that there exists $K$ large so that for $k\geq K$ we have $p_k(x,0)=0$ for all $x\in \bar{B}_{2r}(x_0)$.

Let $\bar{v}$ be as in Lemma \ref{lem:barrier initial time}, and define,
\[
w_k(x,t)=\bar{v}(x,t)e^{(1+ \balpha P_M)kt}.
\]
We will show that $w_k$ is a supersolution of the equation for $p_k$, and that $p_k\leq w_k$ on the parabolic boundary of $B_{2r}(x_0)\times (0,\frac{r}{2M})$ for $k\geq K$. Let us check the latter:
 
We have $p_k(x,0)=0$ for all $x\in \bar{B}_{2r}(x_0)$ and $k\geq K$. Since $\bar{v}(x,0)\geq 0$, we therefore find
\[
w_k(x,0)\geq p_k(x,0) \text{ on }B_{2r}(x_0).
\]
Now let $x\in \partial B_{2r}(x_0)$ and $t\in (0,\frac{r}{2M})$. According to Lemma \ref{lem:barrier initial time}, $\bar{v}(x,t)=P_M$. Therefore,
\[
w_k(x,t)= P_M e^{(1+ \balpha P_M)kt} \geq P_M \geq p_k(x,t),
\]
where the second inequality follows from the uniform bound on $p_k$ (\ref{bd pk}) of Lemma \ref{lem:results perthame}. 

Now we check that $w_k$ is a supersolution to the  equation for $p_k$. To this end, we compute:
\[
\partial_tw_k=(\bar{v}_t +\bar{v}(x,t)(1+ \balpha P_M)k) e^{(1+ \balpha P_M)kt} 
\]
and 
\[
-V_k\cdot Dw_k =-e^{(1+ \balpha P_M)kt} V_k\cdot D\bar{v}.
\]
We use the uniform bound (\ref{bdsV}) on $V_k$  of Lemma \ref{lem:results perthame} to estimate the right-hand side of the previous side from below by $-M|D\bar{v}|$ and find,
\[
-V_k\cdot Dw_k \geq -e^{(1+ \balpha P_M)kt} M|D\bar{v}|.
\]
Putting this together with $\partial_t w_k$ yields
\begin{align*}
\partial_tw_k - V_k\cdot Dw_k &\geq (1+ \balpha P_M)k e^{(1+ \balpha P_M)kt}+(\bar{v}_t -M |D\bar{v}|)e^{(1+ \balpha P_M)kt} \\
&\geq (1+ \balpha P_M)k w_k,
\end{align*}
where the last inequality follows because of the equation that $\bar{v}$ satisfies. By the estimate (\ref{bdabovereac}) on the reaction term of the equation for $p_k$, we have that $p_k$ is a subsolution of,
\[
u_t-V_k\cdot Du=(1+ \balpha P_M)ku.
\]
 We have seen that  $p_k(x,0)\leq w_k(x,0)$ holds on the parabolic boundary of $B_{2r}(x_0)\times (0,\frac{r}{2M})$. The comparison principle therefore implies,
\[
p_k(x,t)\leq w_k(x,t) \text{ on $B_{2r}(x_0)\times (0,\frac{r}{2M})$ and for $k\geq K$.}
\]
Let us now take $x\in B_{r/2}(x_0)$ and $t\in (0, r/2M)$. According to Lemma \ref{lem:barrier initial time}, we have $\bar{v}(x, t)=0$. Therefore, the definition of $w_k$ says that we have $w_k(x,t)=0\cdot e^{(1+ \balpha P_M)kt}=0$. The previous line therefore implies
\[
p_k(x,t)\leq 0 \text{ on $B_{r/2}(x_0)\times (0,\frac{r}{2M})$ and for $k\geq K$.}
\]
Therefore, 
\[
dist(x,A^k_t)\geq r/4M \text{ for }(x,t)\in B_{r/4}(x_0)\times ( 0,\frac{r}{4M}) \text{ and for }k\geq K.
\]
This implies,
\[
\liminf{}_{*}dist(x,A^k_0)>0 \text{ on } B_{r/4}(x_0),
\]
so in particular $x_0\in (\Omega^2)^{int}_0$, as desired.
\end{proof}

\section{Convergence in the positive region}
\label{sec:positive region}
So far we have shown that the region 
\[
\Omega^1_t=\{x|\liminf{}_*p_k(x,t)>0\}
\]
is a superflow with  velocity $DW_{\infty}$. Now we will establish:
\begin{prop}
\label{prop:conv in pos region}
Let $Q$ be a compact subset of $\{(x,t)|t>0, x\in (\Omega^1)^{int}_t\}$. Then $p_k$ converges uniformly on $Q$ to $(Id-  G)^{-1}(W_\infty(x,t))$.
\end{prop}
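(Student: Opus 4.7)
The strategy is that the reaction term $kp_k(W_k - p_k + G(p_k))$ in (\ref{eq:fromPandV}), being amplified by $k$, drives $p_k$ toward the stable root $H(W_k) \to H(W_\infty)$ on a timescale of order $1/k$, much faster than transport moves points by $O(1)$. The plan is to construct sub- and super-solution barriers for $p_k$ of the same form as in Lemma \ref{lem:propagation}, but with the barrier height now governed by the full reaction ODE $\dot\eta = k\eta(w - \eta + G(\eta))$ rather than the cruder estimate $kf(\eta)$. Working on a finite cover of $Q$ by small cylinders on which $W_\infty$ is nearly constant, comparison then yields $|p_k - H(W_\infty)| \leq \varepsilon$ on $Q$ for $k$ large.

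Fix $\varepsilon > 0$ and cover $Q$ by finitely many cylinders $U_i = B_{r_i}(x_i) \times [t_i - \tau_i, t_i]$, each with closure contained in a compact set $Q' \subset \{(x, t) : t > 0,\ x \in (\Omega_t^1)^{int}\}$ and small enough that $\osc_{U_i} W_\infty \leq \varepsilon/3$. By the definition of $\Omega^1$ and compactness, $p_k \geq a > 0$ on $Q'$ for $k$ large, and by Lemma \ref{lem:results perthame}, $W_k \to W_\infty$ uniformly on $Q'$. Set $w_{-,i} := \inf_{U_i} W_\infty - \varepsilon/3$, $w_{+,i} := \sup_{U_i} W_\infty + \varepsilon/3$, and let
\[
\dot\eta_k = k\eta_k(w_{-,i} - \eta_k + G(\eta_k)),\ \eta_k(t_i - \tau_i) = a/2;\quad \dot\mu_k = k\mu_k(w_{+,i} - \mu_k + G(\mu_k)),\ \mu_k(t_i - \tau_i) = P_M.
\]
Elementary ODE analysis shows $\eta_k(t) \to H(w_{-,i})$ and $\mu_k(t) \to H(w_{+,i})$ exponentially fast in $k$; using the Lipschitz bound on $H$ from Lemma \ref{lem:properties of G}, this gives $\eta_k(t) \geq H(W_\infty(x,t)) - \varepsilon$ and $\mu_k(t) \leq H(W_\infty(x,t)) + \varepsilon$ for $(x,t) \in U_i$ and $k$ large.

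To apply comparison we localize spatially. Let $\phi_i$ be smooth with $\phi_i \equiv 1$ on $B_{r_i/2}(x_i)$ and compactly supported in $B_{r_i}(x_i)$, let $\psi^k$ be the subsolution front from Lemma \ref{lem:psiepsubsoln} applied to $\phi_i$ on $[t_i - \tau_i, t_i]$ (with $\alpha > 0$ chosen small enough that $\psi^k \geq 1/k$ on the inner cylinder where $\phi_i \equiv 1$), and let $q \in C^\infty(\rr, [0, 1])$ be non-decreasing with $q = 0$ on $(-\infty, 0]$ and $q = 1$ on $[1, \infty)$. Define
\[
\underline p_k(x, t) := \eta_k(t)\, q(k\psi^k(x, t)),\qquad \bar p_k(x, t) := \mu_k(t)\, q(k\psi^k(x, t)) + P_M\bigl(1 - q(k\psi^k(x, t))\bigr).
\]
A direct computation using $\partial_t \psi^k - V_k \cdot D\psi^k \leq 0$ from Lemma \ref{lem:psiepsubsoln}, the monotonicity of $G$, and the bound $\mu_k \geq H(w_{+,i}) \geq H(W_k)$ verifies that $\underline p_k$ is a subsolution and $\bar p_k$ a supersolution of the $p_k$-equation on $U_i$. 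On the parabolic boundary of $U_i$ we have $\underline p_k \leq a/2 \leq p_k$ and $\bar p_k = P_M \geq p_k$ at time $t_i - \tau_i$; on the lateral boundary $\psi^k \leq 0$, so $\underline p_k = 0 \leq p_k \leq P_M = \bar p_k$. Standard comparison (the equation has bounded coefficients and is Lipschitz in $u$) yields $\underline p_k \leq p_k \leq \bar p_k$ in $U_i$. On the subregion where $q(k\psi^k) = 1$, which covers $Q \cap U_i$ for $k$ large, this gives $H(W_\infty) - \varepsilon \leq p_k \leq H(W_\infty) + \varepsilon$.

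The main obstacle is the spatial localization of the ODE barriers, which are constant in $x$: without a cutoff, they could fail to dominate or be dominated by $p_k$ on the lateral boundary, where $p_k$ may take any value in $[0, P_M]$. The propagating front $\psi^k$ from Lemma \ref{lem:psiepsubsoln} resolves this by forcing the barriers to attain trivial values ($0$ for $\underline p_k$, $P_M$ for $\bar p_k$) before reaching the boundary. The subtlest calculation is the supersolution property of $\bar p_k$ in the transition region $q \in (0, 1)$; it reduces to verifying $q\mu_k(\mu_k - G(\mu_k) - w_{+,i}) \leq u(u - G(u) - W_k)$ at points where $u := q\mu_k + (1 - q)P_M$, which follows from $u \geq q\mu_k$, $w_{+,i} \geq W_k$, and the strict monotonicity of $Id - G$.
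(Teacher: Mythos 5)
Your proof is correct in its essentials and follows the same overall strategy as the paper's: the $k$-amplified reaction term drives $p_k$ toward $H(W_k)\approx H(W_\infty)$ on a fast timescale, and a shrinking spatial cutoff allows local comparison on small cylinders where $W_k$ is nearly constant. The main difference is technical. The paper works pointwise at each $(x_0,t_0)$ and builds explicit radial barriers of the form $g(t)\,\psi(|x-x_0|+M(t-t_0'))$, which shrink inward at the crude maximum speed $M=\|DW_k\|_\infty$ and whose lifespan is controlled directly by $r/M$; you instead cover $Q$ by finitely many cylinders and localize with the propagating front $\psi^k$ from Lemma~\ref{lem:psiepsubsoln}, which tracks $V_k$ more precisely but has a lifespan $\bar h$ constrained by $\|\phi_i\|_{C^3}$. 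Both work; the paper's version avoids having to monitor $\bar h$ as $r_i\to 0$. The paper also orders the argument (establish $\limsup^*p_k\le H(\beta)$ first, then use $H(\beta)\ge a/2$ to set up the $\liminf$ barrier), whereas your subsolution computation closes without this ordering, which is a modest simplification.

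One genuine, though small, gap: in the supersolution $\bar p_k=\mu_k q+P_M(1-q)$, the transition-region term $(\mu_k-P_M)\,q'\,k\,(\partial_t\psi^k-V_k\cdot D\psi^k)$ has the favorable sign only when $\mu_k\le P_M$, i.e.\ when $w_{+,i}\le P_M$, and your claimed bound $\mu_k\ge H(w_{+,i})$ likewise fails if $w_{+,i}>P_M$ (since then $\mu_k$ \emph{increases} from $P_M$). Because $W_\infty\le P_M$, one always has $w_{+,i}\le P_M+\varepsilon/3$, so this can happen. The fix is easy and worth stating: whenever $\sup_{U_i}W_\infty\ge P_M-\varepsilon/3$, the upper bound is trivial because the Lipschitz bound $H'<1$ gives $H(W_\infty)\ge P_M-\varepsilon$ on $U_i$, while $p_k\le P_M$ always; so one may restrict the barrier construction to cylinders with $\sup_{U_i}W_\infty<P_M-\varepsilon/3$, where $w_{+,i}\le P_M$ and your computation goes through. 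With that case distinction added (and the usual bookkeeping that the ODE relaxation time $O(\ln k/k)$ and the cutoff width are compatible with $\tau_i\le\bar h$), the argument is complete.
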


\begin{proof}

Let $(x_0,t_0)\in Q$. We will establish,
\[
\liminf{}_*p_k(x_0,t_0)\geq (Id-G)^{-1}(W_\infty(x_0,t_0))\geq \limsup{}^*p_k(x_0,t_0),
\]
which implies the desired result.

There exists $r_0$ with $B_{r_0}(x)\subset \Omega^1_t$ for $|t-t_0|\leq r_0$, and $t_0-r_0>0$. Thus there exists $a>0$ such that, for $k$ large enough, 
\begin{equation}
\label{pk on Omega1}
p_k(x,t)\geq a/2 \text{ for }(x,t)\in Q_{r_0}(x_0,t_0),
\end{equation}
 where we use $Q_r(x_0,t_0)$ to denote the cylinder, $Q_r(x_0,t_0)=B_r(x_0,t_0)\times [t_0-r,t_0+r]$. 

Let us fix $\ep>0$. Due to the uniform convergence of the $W_k$ (see Lemma \ref{lem:results perthame}), we have that there exist $r\in (0,r_0)$  such that for all $k$ large enough, 
\begin{equation}
\label{Wk Winfty}
|W_k(x,t)- W_\infty(x_0,t_0)|\leq \ep \text{ for }(x,t)\in Q_r(x_0,t_0).
\end{equation}

For the remainder of this proof we use $H(u)$ and $\beta$ to denote,  respectively, $H(u)=(Id-G)^{-1}(u)$ and $\beta=W_\infty(x_0,t_0)$. 

\textbf{Step one.} 
In this step we will establish,
\begin{equation}
\label{limsup pk}
\limsup{}^{*}p_k(x_0,t_0)\leq H(\beta).
\end{equation}
Using (\ref{Wk Winfty}), together with the uniform bound (\ref{bd pk}), yields that $p_k$ is a subsolution of,
\begin{equation}
\label{pk subsolution}
p_k-Dp_kDW_k\leq kP_M(W_\infty(x_0,t_0)+\ep -p_k+G(p_k))
\end{equation}
on $Q_r(x_0,t_0)$, where we assume without loss of generality $P_M\geq 1$.

 Let $\psi:\rr^+\rightarrow \rr$ be a smooth increasing function such that,
\[
1\leq \psi\leq P_M,\   \   \psi(z)\equiv 1 \text{ for }z\leq r/4,\   \   \psi(z)\equiv \frac{P_M}{H(\beta)} \text{ for }z\geq r/2.
\]
Define 
\[
g(t)=H(\beta)+P_M e^{-k (t-(t_0-r/8M))}+\ep,
\]
and
\[
v(x,t)= g(t) \psi(|x-x_0|+M(t-(t_0-r/8M))).
\]
First we establish that $v\geq p_k$ on the parabolic boundary of $Q:=B_{r}(x_0)\times (t_0-r_0/8M, t_0+r_0/8M)$. To this end, we first take $x\in \bdry B_{r_0}(x_0)$ and $t\in (t_0-r_0/8M, t_0+r_0/8M)$. We have,
\begin{align*}
|x-x_0|+M(t-(t_0-r/8M))&\geq |x-x_0|= r.
\end{align*}
Since $\psi$ is increasing, we find 
\[
\psi(|x-x_0|-M(t-(t_0-r/8M))\geq \psi(r)= \frac{P_M}{H(\beta)} ,
\]
where the equality follows from our choice of $\psi$. Thus we have,
\[
v(x,t)\geq g(t)\frac{P_M}{H(\beta)} \geq P_M.
\]
Next we take $x\in B_{r}(x_0)$ and look at the initial time $t_0-r/8M$, to find,
\[
v(x,t_0-r/8M)\geq g(t_0-r/8M)=(H(\beta)+P_M+\ep)\geq P_M
\]
where the first inequality follows since $\psi\geq 1$ everywhere.

Next we will show that $v$ is a supersolution to (\ref{pk subsolution}).  
We have,
\begin{align*}
v_t-DvDW_k&=g'(t)\psi+M g(t)\psi'-g(t)(\psi')(x-x_0)\cdot W_k\\
&= g'(t)\psi + g(t)(\psi')(M-(x-x_0)\cdot DW_k),\\
&=g'(t)\psi + g(t)|\psi'| (M-(x-x_0)\cdot DW_k),
\end{align*}
where $\psi$ and $\psi'$ are evaluated at $(|x-x_0|+M(t-(t_0-r/8M))$ throughout, and the first inequality follows since $\phi$ is decreasing. According to the uniform supremum bound on $DW_k$ given in (\ref{bdsV}), we have $|(x-x_0)\cdot DW_k|\leq rM\leq M$, so we find,
\[
v_t-DvDW_k\geq g'(t)\psi.
\]
Thus,
\begin{align*}
v_t-DvDW_k-&kP_M(\beta+\ep -v+G(v))\geq g'(t)\psi-kP_M(\beta+\ep -v+G(v))\\
&= -k P_M e^{-\delta k (t-(t_0-r/8M))})\psi - kP_M(\beta +\ep - (Id-G)(v)).
\end{align*}
We would like to show that the right-hand side of the previous line is non-negative (and thus $v$ is a supersolution of (\ref{eq pk super})). 
In order to do this, we need to estimate the difference between $(Id-G)(v)$ and $\beta = (Id-G)(H(\beta))$. First, since $(Id-G)$ is increasing and $\phi\geq 1$, we find,
\[
(Id-G)(v)\geq (Id-G)(g)= (Id-G)(H(\beta)+P_M e^{- k (t-(t_0-r/8M))})+\ep).
\]
Next, we recall that $G$ is $C^1$ and, moreover, $G'\leq -\alpha$. We use this to estimate the right-hand side of the previous line from below and find,
\[
(Id-G)(v)\geq (Id-G)(H(\beta))+(1+\alpha)(P_M e^{- k (t-(t_0-r/8M))})+\ep).
\]
Recalling that $(Id-G)$ and $H$ are inverses, we find
\[
(Id-G)(v)\geq \beta+(1+\alpha)(P_M e^{- k (t-(t_0-r/8M))})+\ep).
\]
Thus we have,
\begin{align*}
\beta +\ep - (Id-G)(v)&\leq (\beta +\ep)-\beta -(1+\alpha)(P_M e^{- k (t-(t_0-r/8M))})+\ep)\\
&\leq -(1+\alpha)(P_M e^{- k (t-(t_0-r/8M))}).
\end{align*}
We use this to estimate from below the last term on the right-hand side of the equation for $v$ and find,
\begin{align*}
v_t-DvDW_k-&kP_M(\beta+\ep -v+G(v))\geq \\
 &\geq -k P_M e^{-\delta k (t-(t_0-r/8M))})\psi + kP_M (1+\alpha)(P_M e^{-k (t-(t_0-r/8M))})\\
&= kP_Me^{- k (t-(t_0-r/8M))} (-\psi +(1+\alpha)P_M)) .
\end{align*}
Since $\psi\leq P_M$, we find,
\[
v_t-DvDW_k-kP_M(\beta+\ep -v+G(v)) \geq 0,
\]
and hence $v$ is a supersolution of (\ref{pk subsolution}). Since $v\geq p_k$ on the parabolic boundary of $Q$, we find $v\geq p_k$ on all of  $Q$. 

Consider $(x,t)\in Q_{r/16M}(x_0,t_0)\subset Q$. Then $t\in (t_0-r/16M, t_0+r/16M)$, and so, since $g$ is decreasing in $t$,
\[
g(t)\leq H(\beta)+P_M e^{- k (t_0-r/16M-(t_0-r/8M))}+\ep \leq  H(\beta)+P_M e^{-k (r/16M)}+\ep.
\]
In addition,
\[
|x-x_0|+M(t-(t_0-r/8M))\leq r/16M + M(r/16M+r/8M)\leq r/4
\]
where we use $M\geq 1$. Thus, since $\psi$ is increasing,
\[
\psi(|x-x_0|+M(t-(t_0-r/8M)))\leq \psi(r/4)=1.
\]
Therefore, for $(x,t)\in Q_{r/16M}(x_0,t_0)$, we have,
\[
p_k(x,t)\leq v(x,t)\leq H(\beta)+P_M e^{- k (r/16M)}+\ep.
\]
Taking $\limsup{}^*$ thus yields,
\[
\limsup{}^*p_k(x,t)\leq H(\beta)+\ep
\]
for $(x,t)\in Q_{r/16M}(x_0,t_0)$, where $r$ depends on $\ep$. Since $(x_0,t_0)\in Q_{r/16M}(x_0,t_0)$ for any $r$, we find that
\[
\limsup{}^*p_k(x_0,t_0)\leq H(\beta)+\ep
\]
holds for any $\ep>0$. Thus we conclude that the desired result (\ref{limsup pk}) holds.

\textbf{Step two.} Next we establish,
\[
\liminf{}_*p_k(x_0,t_0)\geq H(\beta).
\]
The proof is very similar to that of (\ref{limsup pk}), and we provide only a sketch. 
Using (\ref{Wk Winfty}) to bound from below the right-hand side of the equation for $p_k$ implies that $p_k$ is a supersolution of,
\[
u_t-DuDW_k=k p_k(\beta-\ep - u+G(u)).
\]
In addition, $H(\beta-\ep)$ is a solution of the equation in the previous line. Thus,
\[
\tilde{p_k}:=\min\{p_k, H(\beta-\ep)\}
\]
is also a  supersolution of that equation (here we mean supersolution in the classical viscosity sense, which suffices for the remainder of this proof). According to (\ref{pk on Omega1}), we have $p_k\geq a/2$ on $Q_r(x_0,t_0)$. This implies 
\[
\limsup{}^*p_k(x_0,t_0)\geq a/2.
\]
Thus, the inequality (\ref{limsup pk}) that we established in the first step yields,
\[
H(\beta)\geq a/2.
\]
Hence, for $\ep$ small enough, $H(\beta-\ep)\geq a/4$. Therefore, $\tilde{p}_k\geq a/4$ on $Q_r(x_0,t_0)$ as well. Therefore, $\tilde{p}_k$ is a supersolution of,
\begin{equation}
\label{eq pk super}
u_t-DuDW_k=k (W_\infty(x_0,t_0)-\ep - u+G(u))a/4
\end{equation}
on $Q_r(x_0,t_0)$. We will now construct a certain subsolution to this equation. 
 Let $\phi:\rr^+\rightarrow \rr$ be a smooth decreasing function such that,
\[
0\leq \phi\leq 1,\   \   \phi(z)\equiv 1 \text{ for }z\leq r/8,\   \   \phi(z)\equiv 0 \text{ for }z\geq r/2.
\]
Define $h(t)=H(\beta)(1-e^{-\frac{a}{4}k (t-(t_0-r/8M))})-\ep$ and 
\[
w(x,t)=h(t)\phi(|x-x_0|+M(t-(t_0-r/8M))).
\]
Just as in the previous step, we find $w\leq p_k$ on the parabolic boundary of $Q$, and that $w$ is a subsolution of (\ref{eq pk super}). Since $\tilde{p}_k$ is a supersolution of (\ref{eq pk super}) on $Q$, and $w\leq p_k$ on the parabolic boundary of $Q$, the comparison principle implies,
\[
\tilde{p}_k \geq w \text{ on }Q.
\]
Similarly to the previous step, we find that for $(x,t)\in Q_{r/16M}(x_0,t_0)$,
\[
w(x,t)\geq H(\beta)(1-e^{-\frac{a}{4}k(9r/16M))})-\ep.
\]
The two previous lines, together with the definition of $\tilde{p_k}$ imply,
\[
p_k(x,t)\geq H(\beta)(1-e^{-\frac{a}{4}k(9r/16M))})-\ep \text{ on }Q_{r/16M}(x_0,t_0).
\]
Taking $\liminf{}_*$ yields,
\[
\liminf{}_* p_k(x,t)\geq H(\beta)-\ep \text{ on }Q_{r/16M}(x_0,t_0),
\]
where $r$ depends on $\ep$. However, since $(x_0,t_0)\in Q_{r/16M}(x_0,t_0)$ for all $r$, we find 
\[
\liminf{}_* p_k(x_0,t_0)\geq H(\beta)-\ep
\]
holds for all $\ep$, and the desired result thus follows.
\end{proof}

\section{Proof of the main result}
\label{sec:proof main}
This section puts together the results of the previous ones in order to establish  our main result. Throughout this section we will use the auxiliary function $\theta$, which we define to be the unique solution of 
\begin{equation}
\label{eq:theta}
\theta_t-D\theta\cdot DW_\infty =0
\end{equation}
and with initial data,
\begin{equation}
\label{def:dist}
d(x,\bdry\Omega_0)=
\begin{cases}
dist(x, \bdry\Omega)\wedge 1 \text{ for } x\in \Omega_0,\\
-dist(x, \bdry\Omega)\vee -1 \text{ for } x\notin \Omega_0.
\end{cases}
\end{equation}
(According to Theorems \ref{thm:existence} and \ref{thm:comparison}, as well as the  assumption that $\Omega_0$ is compact, $\theta$ is well-defined.) 

For the sake of presentation, we prove items (b) and (c) of Theorem \ref{main result} first. Then we will establish some corollaries, and finally we will deduce item (a).

\begin{proof}[Proof of  parts (b), (c) of Theorem \ref{main result}]
We establish:
\begin{itemize}
\item If $Q$ is a compact subset of $\{(x,t)|t>0, \theta(x,t)>0\}$, then the $p_k$ converge uniformly on $Q$ to $H^{-1}(W_\infty)$, and the $n_k$ converge uniformly on $Q$ to 1.
\item If $Q$ is a compact subset of $\{(x,t)|t>0, \theta(x,t)<0\}$, then there exists $K$ large enough such that if $k\geq K$ then $p_k\equiv 0$ on $Q$ and $n_k\equiv 0$ on $Q$.
\end{itemize}
Together, the two bullet points imply that $p_k$ converge locally uniformly to $p_\infty=H^{-1}(W_\infty)\chi_{\{p_\infty>0\}}$, and $n_k$ to $\chi_{\{p_\infty>0\}}$, on $(\rr^n\times (0,\infty))\setminus \{\theta=0\}$. Moreover, this identifies $\{\theta>0\}$ with $\{n_\infty>0\}$ and  $\{\theta=0\}$ with $\bdry\{n_\infty>0\}$.

We establish the first bullet-point. 
Let us fix $\ep>0$. We take $\phi^{\ep}$ to be a smooth, non-decreasing function such that
\[
\phi^{\ep}(u)=
\begin{cases}
1\text{ for }u>2\ep,\\
-1 \text{ for }u<\ep,
\end{cases}
\]
and define $v_\ep$ by,
\[
v_\ep(x,t)=\phi^{\ep}(\theta(x,t)).
\]
Since $\theta$ is a viscosity solution of (\ref{eq:theta}), and $\phi^{\ep}$ is non-decreasing, a direct computation implies that  $v_\ep(x,t)$ 
is also a subsolution of (\ref{eq:theta}). (Indeed, here we are using that the equation (\ref{eq:theta}) is geometric. See, for example, \cite[Lemma 1.3]{Souganidis Front Propagation} for further discussion of this property in a more general context.)

According to item (\ref{item:superflow}) of Proposition \ref{prop:flows}, if $\Omega^1_t$ is given by, $\Omega^1_t=\{x|\liminf{}_*p_k>0\}$ 
then $(\Omega^1_t)^{int}$ is a generalized superflow with  velocity $-DW_\infty$.  Theorem \ref{thm:soln implies flow} thus implies that $w$ given by, $w(x,t)=\chi_{(\Omega_t^1)^{int}}(x) - \chi_{\bar{\Omega_t^1}^c}(x)$ is a viscosity supersolution of (\ref{eq:theta}).

We now aim to establish,
\[
v_\ep(x,0)\leq w(x,0).
\]
To this end, let us take $x$ such that $v_\ep(x,0)>-1$ (for any other $x$ we have that the previous line automatically holds, as $w\geq -1$ everywhere). The definition of $v^\ep$ implies that $x$ is such that $\theta(x,0)\geq \ep$.  The definition of $\theta(x,0) $ therefore implies $x\in  \Omega_0^{int}$. Applying Proposition \ref{prop:inittime}, we find $x\in  (\Omega_0^1)^{int}$. Therefore $w(x,0)=1\geq v_\ep(x,0)$, as desired.

We may now apply the comparison principle of Theorem \ref{thm:comparison} to find 
\begin{equation}
\label{V and W}
v_\ep(x,t)\leq w(x,t)
\end{equation}
for all $t$. 

Let $Q$ be a compact subset of $\{(x,t)|t>0, \theta(x,t)>0\}$. Since (according to Theorem \ref{thm:existence}) $\theta$ is continuous, there exists $\delta>0$ such that $\theta(x,t)\geq \delta$ for $(x,t)\in Q$.  Because  the $\phi^{\ep}$ are non-decreasing, we find, for $(x,t)\in Q$,
\[
v_\ep(x,t)=\phi^{\ep}(\theta(x,t))\geq \phi^{\ep}(\delta).
\]
Let us take $\ep=\delta/2$. Then the right-hand side of the previous line equals 1. Thus  $v_\ep(x,t)=1$ for $(x,t)\in Q$. Now we use (\ref{V and W}) to find $w=1$ on $Q$, which implies $Q\subset (\Omega_1^t)^{int}$. Hence Proposition \ref{prop:conv in pos region} implies that the $p_k$ converge uniformly to $(Id-G)^{-1}(W_\infty)$ on $Q$. 

The proof of the first bullet-point is now complete, and the statement in the second one is proved similarly. In particular, we use the definition of $\Omega^2_t$, the fact that it is a generalized subflow (proved in Proposition \ref{prop:flows}), and the second part of Proposition \ref{prop:inittime}. We omit the details.

\end{proof}

\subsection{Proof of part (a) of the main result}
\label{sec:further}

To establish part (a) of the main result we need to investigate the ``size" of the zero set of $\theta$. We do this in the following lemma.  Only item (\ref{item:meas Gamma}) is used in the proof of Theorem \ref{main result}; we include item (\ref{item: H dim Gamma}) in order to provide a better description of the zero set of $\theta$.

Throughout this section we use $|A|$ to denote the Lebesgue measure of $A\subset \rr^n$ and, for $t>0$,  $\Gamma_t:=\{x|\theta(x,t)=0\}$. 

\begin{lem}
\label{lem:Gammat}
Assume the hypotheses of Theorem \ref{main result}. 
\begin{enumerate}
\item \label{item:meas Gamma} Let $Q$ be a compact subset of $\rr^n$ and let $\ep>0$. There exists an open set $A_\ep$ such that $|A_\ep|\leq \ep$ and $Q\cap \Gamma_t\subset A_\ep$. 
\item \label{item: Gamma mea 0} For any $t>0$ we have $|\Gamma_t|=0$.
\item \label{item: H dim Gamma} We have,
\[
dim_H(\Gamma_t)\leq \exp(Nt) dim_H(\Gamma_0),
\]
where $dim_H$ is Hausdorff dimension and $N$ is the constant from assumption \ref{hyp:logL}. 
\end{enumerate}
\end{lem}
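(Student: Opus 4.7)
The plan is to identify $\Gamma_t$ as the image of $\bdry\Omega_0$ under the flow of $DW_\infty$ and exploit the boundedness of $\diver(DW_\infty)$ to transport measure-theoretic information from time $0$ to time $t$. Let $\Phi_t(x) := X(x,0,t)$ denote the flow map of Theorem \ref{thm:u and ODE}; by that theorem $\Phi_t$ is a H\"older homeomorphism with inverse $\Phi_t^{-1}(x) = X(x,t,0)$. Since $\theta$ is the unique continuous viscosity solution of (\ref{eq:theta}) with Lipschitz initial data $d(\cdot, \bdry\Omega_0)$, the characteristic representation (cf.\ Theorem \ref{thm:u and ODE} and the proof of Theorem \ref{thm:discontinuous}) yields $\theta(x,t) = d(\Phi_t^{-1}(x), \bdry\Omega_0)$, and consequently
\[
\Gamma_t = \Phi_t(\bdry\Omega_0),
\]
a compact set, with $\Gamma_0 = \bdry\Omega_0$ of Lebesgue measure zero by hypothesis.

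The key technical step is the two-sided Jacobian bound
\[
e^{-Ct}|A| \leq |\Phi_t(A)| \leq e^{Ct}|A|
\]
for every bounded Borel $A$, where $C := \|W_\infty - p_\infty\|_{L^\infty}$. The essential input is that the elliptic equation in (\ref{system}) together with the $L^\infty$ bounds on $W_\infty$ and $p_\infty$ (inherited as weak limits of (\ref{Wkbd}) and (\ref{bd pk})) forces $\diver(DW_\infty) = \lap W_\infty = W_\infty - p_\infty$ to lie in $L^\infty$. Mollifying $V = DW_\infty$ in space gives smooth vector fields $V^\ep$ with divergence uniformly bounded by $C$, so the smooth flows $\Phi_t^\ep$ satisfy Liouville's formula with $|\det D\Phi_t^\ep|$ pointwise in $[e^{-Ct}, e^{Ct}]$. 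Since $\Phi_t^\ep \to \Phi_t$ locally uniformly (as in the construction of Theorem \ref{thm:existence}), testing against continuous compactly supported functions and extracting a weak-$*$ $L^\infty$ limit of the Jacobians of $(\Phi_t^\ep)^{-1}$ identifies the push-forward of Lebesgue measure by $\Phi_t^{-1}$ as an absolutely continuous measure with density at most $e^{Ct}$, yielding $|\Phi_t(A)| \leq e^{Ct}|A|$; a symmetric argument produces the lower bound.

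The three items then follow easily. Item (\ref{item: Gamma mea 0}) is immediate: $|\Gamma_t| \leq e^{Ct}|\bdry\Omega_0| = 0$. For item (\ref{item:meas Gamma}), given $Q$ and $\ep>0$, outer regularity and $|\bdry\Omega_0|=0$ produce an open $U \supset \Phi_t^{-1}(Q) \cap \bdry\Omega_0$ with $|U| \leq e^{-Ct}\ep$; then $A_\ep := \Phi_t(U)$ is open (since $\Phi_t$ is a homeomorphism), contains $Q \cap \Gamma_t$, and satisfies $|A_\ep| \leq \ep$. Item (\ref{item: H dim Gamma}) is a direct consequence of Theorem \ref{thm:u and ODE}: $\Phi_t$ is H\"older with exponent $\exp(-Nt)$, and an $\alpha$-H\"older map multiplies Hausdorff dimension by at most $\alpha^{-1}$, so $\dim_H(\Gamma_t) = \dim_H(\Phi_t(\bdry\Omega_0)) \leq \exp(Nt)\dim_H(\bdry\Omega_0) = \exp(Nt)\dim_H(\Gamma_0)$. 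The main obstacle is the Jacobian bound itself: since $\Phi_t$ is merely H\"older, no direct change of variables is available, and one has to verify carefully that the weak-$*$ limit of the Jacobians of the mollified flows really represents the push-forward density of $\Phi_t$, i.e.\ that $\Phi_t^\ep \to \Phi_t$ in a sense strong enough to pass to the limit in the change-of-variables formula.
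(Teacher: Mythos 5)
Your proof is correct, and for items (\ref{item:meas Gamma}) and (\ref{item: Gamma mea 0}) it takes a genuinely different, and in fact more robust, route than the paper. The paper's sketch establishes $\Gamma_t = \Phi_t(\Gamma_0)$ exactly as you do, but then argues for item (\ref{item:meas Gamma}) from the H\"older continuity of $\Phi_t$ together with the assertion that $\bdry\Omega_0$ is locally a graph of a uniformly continuous function, invoking a ``standard real analysis argument.'' That route is geometric and soft, but it is not quantitative in $t$: since the H\"older exponent $\exp(-Nt)$ degrades as $t$ grows, dimension-transport estimates of the type used in item (\ref{item: H dim Gamma}) become vacuous past $t\sim N^{-1}\ln(n/(n-1))$, so some additional idea is being compressed into ``standard real analysis argument.'' Your approach instead exploits the structural fact that $\diver(DW_\infty)=\Delta W_\infty=W_\infty-p_\infty\in L^\infty$, which yields the two-sided Jacobian bound $e^{-Ct}|A|\leq|\Phi_t(A)|\leq e^{Ct}|A|$ uniformly in $A$ and for all $t$. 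This makes the measure-zero conclusion immediate and uniform in time, and it decouples items (\ref{item:meas Gamma})--(\ref{item: Gamma mea 0}) from any regularity assumption on $\bdry\Omega_0$ beyond $|\bdry\Omega_0|=0$; what it buys is precisely the ability to treat large $t$ on the same footing as small $t$. The price is the technical passage to the limit in Liouville's formula for the mollified flows $\Phi_t^\ep$, which you correctly flag; this goes through because $\Phi_t^\ep\to\Phi_t$ and $(\Phi_t^\ep)^{-1}\to\Phi_t^{-1}$ locally uniformly (exactly the convergence established in the proof of Theorem \ref{thm:u and ODE}), so for any open bounded $A$ and compact $K\subset\Phi_t(A)$ one has $K\subset\Phi_t^\ep(A)$ for small $\ep$, whence $|K|\leq e^{Ct}|A|$ and the bound follows by inner regularity. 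Your treatment of item (\ref{item: H dim Gamma}) coincides with the paper's: both use that an $\alpha$-H\"older map raises Hausdorff dimension by a factor at most $\alpha^{-1}$, with $\alpha=\exp(-Nt)$.
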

\begin{proof}
We provide only a sketch. Due to the characterization of $\theta$ provided by Theorem \ref{thm:u and ODE}, $\Gamma_t$ is exactly the image of $\Gamma_0$ under the map $\Phi_t$ defined in Theorem \ref{thm:u and ODE}. We recall that $\Phi_t$ is Holder continuous. Our assumption that $\Omega_0$ is compact implies that locally $\bdry\Omega_0$ is a graph of a uniformly continuous function. Together these two facts imply the claim of item (\ref{item:meas Gamma}) by a standard real analysis argument.

Item \ref{item:meas Gamma} implies that $|Q\cap \Gamma_t|=0$ for any compact set $Q$. Since there exists a countable cover of $\rr^n$ by compact sets, we find that $|\Gamma_t|=0$, as desired.

The definition and basic properties of Hausdorff dimension may be found in \cite{EvansGariepy}. In particular, if $f:\rr^n\rightarrow\rr^n$ is Holder with exponent $\alpha$, and $E\subset \rr^n$, then we have,
\[
dim_H(f(E))\leq \frac{1}{\alpha}dim_H(E).
\]
Item (\ref{item: H dim Gamma}) follows from this and the fact that $\Phi_t$ is Holder continuous with exponent $\exp(-Nt)$. 
\end{proof}
\begin{rem}
\label{rem:dim Gamma}
The estimate in item (\ref{item: H dim Gamma}) is fairly weak -- in particular, for times $t$ larger than $N^{-1}\ln(n/\dim_H(\Gamma_0))$, the estimate says only $\dim_H(\Gamma_t)\leq n$, which holds trivially. However, as far as we can tell, this is the best that we can do, because the map $\Phi_t$ is only Holder continuous, but not (necessarily) Lipschitz.
\end{rem}

\begin{proof}[Proof of part (a) of Theorem \ref{main result}]
Let $Q\subset\subset Q'\subset\subset \rr^n$ and let $p>0$. 
  Let us use $Z_k(x,t)$ to denote,
\[
Z_k(x,t)=W_\infty(x,t)-W_k(x,t).
\]
According to \cite[equations (15),(16)]{PV}, we have that $W_\infty$ satisfies,
\[
-\Delta W_\infty +W_\infty = p_\infty,
\]
where $p_\infty=(Id-G)^{-1}(W_\infty)\chi_{p_\infty>0}$.   Subtracting the equation that $W_k$ satisfies (\ref{eq:fromPandV}) from the previous line, we find that $Z_k$ satisfies,
\[
-\Delta Z_k +Z_k=p_\infty-p_k.
\]
Thus, standard estimates  for elliptic equations (see, for example, \cite[Theorem 9.11]{GT}) yield,
\[
||Z_k||_{W^{2,p}(Q)}\leq C (||p_\infty-p_k||_{L^p(Q')}+ ||Z_k||_{L^p(Q')})
\]
where the constant $C$ depends on $Q$, $Q'$, $n$ and $p$. 

Let $\ep>0$. According to Lemma \ref{lem:results perthame} the $W_k$ converge to $W_\infty$ locally uniformly. Thus we have that for $k$ large enough, the last term on the right-hand side of the previous line is bounded from above by $\ep/2C$.

According to Lemma \ref{lem:Gammat}, there exists an open set $A$ such that $ Q'\cap\Gamma_t\subset A$ and $|A|^p\leq \ep/8CP_M$.  Let us use $Q_1$, $Q_2$, and $Q_3$ to denote,
\[
Q_1=Q'\cap \{\theta>0\}\cap A^c, \  \  Q_2=Q'\cap \{\theta<0\}\cap A^c,\  \   Q_3=Q'\cap A.
\]
Thus we have,
\[
||p_\infty-p_k||_{L^p(Q')}\leq \sum_{i=1,2,3}||p_\infty-p_k||_{L^p(Q_i)}.
\]
According to Theorem \ref{main result}, we have that $p_k\rightarrow p_\infty$ locally uniformly on compact subsets of either $\{\theta>0\}$ or $\{\theta<0\}$. Since $Q_1$ and $Q_2$ are compact subsets of $\{\theta>0\}$ and $\{\theta<0\}$, respectively, we find that for $k$ large enough and $i=1,2$, $||p_\infty-p_k||_{L^p(Q_i)}\leq \ep/8C$. In addition, we have $||p_k||_{L^\infty(\rr^n)}\leq P_M$, so we find, 
\[
||p_\infty-p_k||_{L^p(Q_3)}\leq |A|^p2|P_M|\leq \frac{\ep}{4C}.
\]
Putting everything together yields
\[
||Z_k||_{W^{2,p}(Q)}\leq \ep
\]
for $k$ large enough, as desired.

\end{proof}

\appendix

\section{}
\label{second appendix}
In this appendix we establish the comparison result Theorem \ref{thm:comparison}. First we state and prove some preliminary lemmas. Throughout, we use $H$ to denote,
\[
H(x,t,p)=V(x,t)\cdot p.
\]
For $\ep>0$ let $\rho_\ep$ be a standard molifier. We use $H_\ep$ to denote the time-convolution of $H$  with $\rho_\ep$, defined as follows: Extend $H$ for $t\leq 0$ and $t\geq T$ by zero and define,
\[
H_\ep (x,t,p)=\int_\rr \rho_\ep(t-s)H(x,s,p)\, ds.
\]

The following is a rephrasing of Lemma 8.1 of Ishii. The proof works almost verbatim and we do not repeat it.
\begin{lem}
\label{lem:Hep}
Suppose $V$ satisfies hypotheses (\ref{bdsV}), (\ref{hyp:logL}) and (\ref{hyp:L1}). Let $K$ be a  compact subset  of $\rr^n \times \rr^n$. We have: 
\begin{enumerate}
\item $|H(x,t,p)-H(y,t,p)|\leq \sigma(|x-y|)|p|$.
\item Given $(x_0,p_0)$ and $\delta>0$, we set $b_\ep(t)$ to be,
\[
b_\ep(t)=\sup_{(x,p)\in B_\delta(x_0,p_0)}|H_\ep(x,t,p)-H(x,t,p)|.
\]
We have $\int_0^T b_\ep(t)\, dt\rightarrow 0$ as $\ep\rightarrow 0$.
\item \label{item L1CK} $H_\ep\rightarrow H$ in $L^1((0,T);C(K))$; i.e., for any $(x,p)\in \rr^n\times \rr^n$ we have,
\[
\int_0^T \sup_{(x,p)\in K}|H_\ep(t,x,p)-H(t,x,p)|\, dt \rightarrow 0 \text{ as }\ep\rightarrow 0.
\]
\end{enumerate}
\end{lem}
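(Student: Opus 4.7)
The plan is to prove the three items in sequence, with item (1) being essentially tautological from the hypotheses, and items (2) and (3) following from standard mollifier-approximation theory once one views $H$ correctly as a Bochner-integrable function of $t$.

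For item (1), I would simply observe
\[
H(x,t,p)-H(y,t,p)=(V(x,t)-V(y,t))\cdot p,
\]
and apply Cauchy-Schwarz together with the log-Lipschitz hypothesis (\ref{hyp:logL}) to obtain $|H(x,t,p)-H(y,t,p)|\leq \sigma(|x-y|)|p|$. No further work is needed.

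For items (2) and (3), the central idea is to regard $t\mapsto H(\cdot,t,\cdot)$ as a map from $(0,T)$ into the separable Banach space $C(K)$, for $K$ a fixed compact subset of $\rr^n\times\rr^n$, and to show that it belongs to $L^1((0,T);C(K))$. The uniform bound (\ref{bdsV}) immediately gives $\|H(\cdot,t,\cdot)\|_{C(K)}\leq M \max_{(x,p)\in K}|p|$, which is integrable in $t$ since $(0,T)$ has finite measure. For strong measurability I would invoke Pettis' theorem: $C(K)$ is separable because $K$ is a compact metric space, and weak measurability follows from the pointwise-in-$x$ measurability (\ref{hyp:L1}) combined with the uniform log-Lipschitz regularity (\ref{hyp:logL}) in $x$, which allows one to approximate integrals against Radon measures on $K$ by countable discrete sums. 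Once $H\in L^1((0,T);C(K))$ is established, item (3) is exactly the classical fact that mollification of a Bochner $L^1$ function converges in $L^1$ to the original function, applied to the $C(K)$-valued map $t\mapsto H(\cdot,t,\cdot)$; and item (2) is the specialization to $K=\overline{B_\delta(x_0,p_0)}$.

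The only mildly subtle step is verifying strong measurability from the merely pointwise-in-$x$ assumption (\ref{hyp:L1}) on $V$, but this is handled cleanly by Pettis' theorem once the log-Lipschitz control in $x$ is used to reduce weak measurability against general Radon measures to pointwise evaluation. Everything else is a direct appeal to textbook facts about Bochner integration and mollifier convergence, which is presumably why the authors refer to Ishii's Lemma 8.1 and omit the details.
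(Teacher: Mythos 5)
Your proof is correct, but note the paper does not actually supply a proof here: it declares the lemma ``a rephrasing of Lemma 8.1 of Ishii'' and states that Ishii's argument carries over verbatim. So there is no paper proof to compare against line-by-line; what one can compare is method. Your route packages the problem as Bochner integrability: show $t\mapsto H(\cdot,t,\cdot)\in L^1((0,T);C(K))$ via Pettis' theorem (separability of $C(K)$ plus measurability tested on the norming family of point evaluations), then invoke the general fact that time-mollification converges in the Bochner $L^1$ norm. This is sound. The main things you glide over are (i) that the log-Lipschitz hypothesis is what guarantees $H(\cdot,t,\cdot)$ lands in $C(K)$ at all, and (ii) that for weak measurability it suffices to test against the countable family $\{\delta_{z_n}\}$ for $\{z_n\}$ dense in $K$ rather than against arbitrary Radon measures --- your phrase about ``approximating integrals against Radon measures by countable discrete sums'' is doing a lot of quiet work and could be replaced by the cleaner observation that point evaluations at a countable dense subset of $K$ already norm $C(K)$. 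The more elementary alternative, which is almost certainly what Ishii's Lemma 8.1 does and which does not require any Bochner-space machinery, is a direct $\eta$-net argument: fix a finite $\eta$-net $\{(x_i,p_i)\}$ of $K$, split $\sup_K|H_\ep-H|\leq 2C_K\,\sigma(\eta)+\max_i|H_\ep(x_i,t,p_i)-H(x_i,t,p_i)|$ using the (mollification-stable) log-Lipschitz modulus, integrate in $t$, apply scalar $L^1$ mollifier convergence at each of the finitely many grid points, and let $\eta\to 0$. Both approaches are valid; yours buys generality at the cost of invoking Pettis, while the $\eta$-net version is self-contained and makes the role of hypothesis (\ref{hyp:logL}) more transparent.
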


The next lemma is the analogy of \cite[Lemma 8.2]{Ishii}. The two differences are the regularity of $H$ in $x$ and the fact that now $u$ is USC and $v$ is LSC, while in \cite[Lemma 8.2]{Ishii} they are both continuous. The proof is almost identical to that of \cite[Lemma 8.2]{Ishii}, and we use much of the same notation. We provide the proof for the sake of completeness. 
\begin{lem}
\label{lem:subsolution}
Suppose $V$ satisfies hypotheses (\ref{bdsV}), (\ref{hyp:logL}) and (\ref{hyp:L1}). Let $u$, $v$ be, respectively, an USC sub-solution and a LSC supersolution of (\ref{eq:main}). Let $Q$ be an open subset of $\rr^n\times\rr^n\times (0,T)$ and let $\bar{H}:\rr^n\times \rr^n\times (0,T)\times \rr^n\times \rr^n \rightarrow \rr$ be defined by,
\[
\bar{H}(x,y,t,p,q)=-\sigma(|x-y|)|p|-M|p+q|.
\]
 Then $w(x,y,t)$ defined by
\[
w(x,y,t)=u(x,t)-v(y,t)
\]
is a viscosity subsolution (in the classical sense) of,
\[
w_t+\bar{H}(x,y,Dw)\leq 0
\]
in $Q$.
\end{lem}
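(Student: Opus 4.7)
The plan is to mollify $H$ in time to reduce to the classical viscosity framework, carry out the standard Crandall--Ishii doubling-of-variables argument at the regularized level, and then pass to the limit by stability. Following Ishii's approach, for each $\ep > 0$ I introduce the shifted functions $\tilde u_\ep(x, t) := u(x, t) - \int_0^t b_\ep(s)\,ds$ and $\tilde v_\ep(y, t) := v(y, t) + \int_0^t b_\ep(s)\,ds$ and claim they are, respectively, classical viscosity subsolution and supersolution of $w_t + H_\ep(x, t, Dw) = 0$ in $Q$. This follows directly from the definition: for a $C^1$ test function $\phi$ with $\tilde u_\ep - \phi$ having a local max at $(x_0, t_0)$, the identity $\tilde u_\ep - \phi = u + \int_0^t (-b_\ep)\,ds - \phi$ together with $(H_\ep, -b_\ep) \in H^-(x_0, t_0, D\phi(x_0, t_0))$ (verified from Lemma \ref{lem:Hep}) gives $\phi_t + H_\ep \leq 0$; the argument for $\tilde v_\ep$ is symmetric using $(H_\ep, b_\ep) \in H^+$.

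The core step is then to show that, for each fixed $\ep$, $\tilde w_\ep := \tilde u_\ep - \tilde v_\ep$ is a classical viscosity subsolution of $w_t + \bar H(x, y, t, D_x w, D_y w) \leq 0$ in $Q$. Given a $C^1$ test function $\phi$ with $\tilde w_\ep - \phi$ attaining a strict local max at $(x_0, y_0, t_0)$, I would introduce the time-doubling penalty
\[
\Psi_\gamma(x, y, t, s) := \tilde u_\ep(x, t) - \tilde v_\ep(y, s) - \phi(x, y, t) - \frac{(t - s)^2}{\gamma^2}
\]
on a compact neighborhood of $(x_0, y_0, t_0, t_0)$; standard estimates yield a maximizer $(x_\gamma, y_\gamma, t_\gamma, s_\gamma)$ converging to $(x_0, y_0, t_0, t_0)$ along a subsequence as $\gamma \to 0$. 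Applying the classical sub-/super-solution conditions to $\tilde u_\ep$ and $\tilde v_\ep$ with the sliced test functions $\phi(\cdot, y_\gamma, \cdot) + (\cdot - s_\gamma)^2/\gamma^2$ and $-\phi(x_\gamma, \cdot, t_\gamma) - (t_\gamma - \cdot)^2/\gamma^2$, the time-penalty terms $2(t_\gamma - s_\gamma)/\gamma^2$ appear with opposite signs and cancel upon subtraction, yielding
\[
\phi_t(x_\gamma, y_\gamma, t_\gamma) + H_\ep(x_\gamma, t_\gamma, p_\gamma) - H_\ep(y_\gamma, s_\gamma, -q_\gamma) \leq 0,
\]
with $p_\gamma := D_x \phi(x_\gamma, y_\gamma, t_\gamma)$ and $q_\gamma := D_y \phi(x_\gamma, y_\gamma, t_\gamma)$.

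The key algebraic decomposition
\begin{align*}
V_\ep(x_\gamma, t_\gamma) \cdot p_\gamma + V_\ep(y_\gamma, s_\gamma) \cdot q_\gamma & = \bigl(V_\ep(x_\gamma, t_\gamma) - V_\ep(y_\gamma, t_\gamma)\bigr) \cdot p_\gamma + V_\ep(y_\gamma, t_\gamma) \cdot (p_\gamma + q_\gamma) \\
& \quad + \bigl(V_\ep(y_\gamma, s_\gamma) - V_\ep(y_\gamma, t_\gamma)\bigr) \cdot q_\gamma,
\end{align*}
combined with the uniform bound $|V_\ep| \leq M$, the uniform-in-$\ep$ log-Lipschitz-in-$x$ estimate $|V_\ep(x, t) - V_\ep(y, t)| \leq \sigma(|x - y|)$ inherited from $V$ via convolution in $t$, and the continuity of $V_\ep$ in $t$ (so the last summand vanishes as $\gamma \to 0$ since $t_\gamma - s_\gamma \to 0$), yields in the limit $\gamma \to 0$ the lower bound $H_\ep(x_0, t_0, p_0) - H_\ep(y_0, t_0, -q_0) \geq \bar H(x_0, y_0, t_0, p_0, q_0)$, and therefore $\phi_t + \bar H \leq 0$ at $(x_0, y_0, t_0)$. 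Finally, by Lemma \ref{lem:Hep}, $\int_0^T b_\ep(s)\,ds \to 0$, so $\tilde w_\ep \to w$ uniformly on any compact set; since $\bar H$ is independent of $\ep$, Proposition \ref{prop:stability} implies that $w$ is a classical viscosity subsolution of $w_t + \bar H \leq 0$ in $Q$. The main subtlety lies in the $L^1$-in-$t$ regularity of $V$: a direct doubling argument on the original Hamiltonian $H$ would not close because of the lack of pointwise-in-$t$ control, and the mollification-plus-shift trick is precisely what circumvents this obstacle. The uniform log-Lipschitz estimate in $x$ (weaker than Lipschitz) is exactly what furnishes the $-\sigma(|x - y|)|p|$ contribution to $\bar H$.
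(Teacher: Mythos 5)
Your proof is correct and takes essentially the same approach as the paper: mollify the Hamiltonian in time, absorb the $L^1$ error $b_\ep$ into the sub- and super-solutions via the shift $\int_0^t b_\ep$, run the classical doubling-of-the-time-variable argument, and pass to the limit. The paper stays entirely within its generalized $H^{\pm}$ framework (Step 3 packages $H_\ep - \bar H$ as the Hamiltonian used to test $v$, and the penalty terms do not cancel but rather accumulate to $4(t_{\ep,\alpha}-s_{\ep,\alpha})/\alpha$, which still vanishes by the auxiliary Lemma~\ref{lem:auxsubsoln}); you instead make the reduction to the classical framework explicit by declaring $\tilde u_\ep$, $\tilde v_\ep$ to be classical sub-/super-solutions of the $H_\ep$-equation and then invoke stability. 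These are different packagings of the same ideas, and both hinge on the identical decomposition of $V_\ep(x,t)\cdot p + V_\ep(y,s)\cdot q$ into a log-Lipschitz-in-$x$ piece, an $|p+q|$ piece, and a time-discrepancy piece that vanishes.

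One point you should make explicit: as used in Lemma~\ref{lem:Hep}, $b_\ep$ depends on a base point $(x_0,p_0)$ and a radius $\delta$, so the blanket claim that $\tilde u_\ep$, $\tilde v_\ep$ are classical sub-/super-solutions of $w_t + H_\ep = 0$ ``in $Q$'' is not quite well-posed as stated. You need to fix a compact set $K \subset \rr^n \times \rr^n$ large enough to contain the spatial doubling points and the gradient values $p_\gamma$, $-q_\gamma$ (which is possible since they all converge to $(x_0,p_0)$ and $(y_0,-q_0)$), and define $b_\ep(t) = \sup_{(x,p)\in K}|H_\ep(x,t,p)-H(x,t,p)|$; item~(\ref{item L1CK}) of Lemma~\ref{lem:Hep} then still gives $b_\ep \to 0$ in $L^1(0,T)$. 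The paper handles this by fixing $(x_0,p_0)$ and $\delta$ at the outset (its Step 2) and verifying in Step 3 the nesting $B_{\delta/2}(x_{\ep,\alpha},p_{\ep,\alpha}) \subset B_\delta(x_0,p_0)$. With this localization made explicit, your argument is complete. (Also note that Proposition~\ref{prop:stability} is stated for continuous solutions, whereas $\tilde w_\ep$ is only USC; the uniform-convergence stability statement you need for USC subsolutions of a continuous Hamiltonian is standard, but it is cleaner to cite it directly than to appeal to Proposition~\ref{prop:stability}.)
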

We will need the following auxiliary lemma:
\begin{lem}
\label{lem:auxsubsoln}
Suppose $W\in USC(\rr^{2n}\times (0,T)^2)$, $\xi_\ep\in C((0,T)^2)$ and $\xi_\ep(t,s)$ converges to zero uniformly. Suppose 
\[
(x,y,t)\mapsto W(x,y,t,t)
\] 
has a local maximum at $(x_0,y_0,t_0)$ which is strict in $B_\gamma((x_0,y_0,t_0))$ for some $\gamma>0$. Let $\Phi_{\ep,\alpha}$ be given by,
\[
\Phi_{\ep,\alpha} (x,y,t,s)=W(x,y,t,s)+\xi_\ep(t,s)-\frac{(t-s)^2}{\alpha},
\]
and let  $(x_{\ep,\alpha}, y_{\ep,\alpha}, t_{\ep,\alpha}, s_{\ep,\alpha})$ be the maximum of $\Psi$ on $B_\gamma(x_0,y_0,t_0)$. Then
\begin{equation}
\label{limpep}
\lim_{\ep\rightarrow 0}\lim_{\alpha\rightarrow 0} |(x_{\ep,\alpha}, y_{\ep,\alpha}, t_{\ep,\alpha}, s_{\ep,\alpha})-(x_0,y_0,t_0, t_0)|=0
\end{equation}
and, for each $\ep$,
\begin{equation}
\label{talpha=salpha}
\lim_{\alpha\rightarrow 0}\frac{|t_{\ep,\alpha}-s_{\ep,\alpha}|}{\alpha}=0.
\end{equation}
\end{lem}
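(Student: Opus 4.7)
The plan is a standard doubling-of-variables argument carried out in two successive limits—first $\alpha \to 0$ with $\ep$ fixed, then $\ep \to 0$—with compactness (subsequence extraction) supplying limit points at each step. The two claims (\ref{limpep}) and (\ref{talpha=salpha}) are handled separately; the first uses the strict maximum hypothesis, while the second is essentially the Crandall--Ishii--Lions monotonicity trick.

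For (\ref{limpep}), I would fix $\ep>0$, take $\alpha_n \to 0$, and pass to a subsequence along which the maximizers converge to some $(\tilde x^\ep, \tilde y^\ep, \tilde t^\ep, \tilde s^\ep)$. First I would show $\tilde t^\ep = \tilde s^\ep$: comparing the value of $\Phi_{\ep,\alpha_n}$ at its maximizer with its value at $(x_0, y_0, t_0, t_0)$ yields a uniform bound on $(t_{\ep,\alpha_n} - s_{\ep,\alpha_n})^2/\alpha_n$ (the bound uses only that $W$ is bounded above on the compact ball, via USC, and continuity of $\xi_\ep$). Next, for any $(x,y,t)$ in the closed ball, the inequality $\Phi_{\ep,\alpha_n}(x_{\ep,\alpha_n},\ldots) \geq \Phi_{\ep,\alpha_n}(x,y,t,t)$, combined with USC of $W$ applied in $\limsup_n$ on the left side and dropping the non-positive penalty, gives
\[
W(\tilde x^\ep, \tilde y^\ep, \tilde t^\ep, \tilde t^\ep) + \xi_\ep(\tilde t^\ep, \tilde t^\ep) \geq W(x,y,t,t) + \xi_\ep(t,t),
\]
so $(\tilde x^\ep, \tilde y^\ep, \tilde t^\ep)$ maximizes $(x,y,t) \mapsto W(x,y,t,t) + \xi_\ep(t,t)$ on the ball. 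Sending $\ep \to 0$ along a further subsequence, uniform convergence $\xi_\ep \to 0$ combined with USC of $W$ identifies any limit $(x_*, y_*, t_*)$ as a maximizer of $(x,y,t)\mapsto W(x,y,t,t)$; strictness then forces $(x_*, y_*, t_*) = (x_0, y_0, t_0)$, and uniqueness of the subsequential limit upgrades this to the iterated limit statement.

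For (\ref{talpha=salpha}), I would use the classical monotonicity argument. Set $M_\alpha := \sup \Phi_{\ep,\alpha}$; since $-(t-s)^2/\alpha$ is nondecreasing in $\alpha$, the map $\alpha \mapsto M_\alpha$ is nondecreasing and bounded above, so it converges as $\alpha \to 0$. Evaluating $\Phi_{\ep, \alpha/2}$ at the maximizer of $\Phi_{\ep, \alpha}$ yields
\[
M_{\alpha/2} \geq M_\alpha \;-\; \frac{(t_{\ep,\alpha} - s_{\ep,\alpha})^2}{\alpha},
\]
so $(t_{\ep,\alpha} - s_{\ep,\alpha})^2/\alpha \leq M_\alpha - M_{\alpha/2} \to 0$ as $\alpha \to 0$.

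The main obstacle is a mismatch between what the above argument delivers and the statement as written: the Crandall--Ishii--Lions comparison naturally gives $(t_{\ep,\alpha} - s_{\ep,\alpha})^2/\alpha \to 0$, i.e.\ $|t_{\ep,\alpha} - s_{\ep,\alpha}| = o(\sqrt{\alpha})$, while (\ref{talpha=salpha}) asks for $|t_{\ep,\alpha} - s_{\ep,\alpha}|/\alpha \to 0$, which is strictly stronger. I do not see how to squeeze the stronger rate out of this setup using only USC of $W$ and uniform smallness of $\xi_\ep$; I suspect this is a typographical slip and that the intended conclusion is the quadratic estimate, which is what subsequent applications in the proof of Theorem \ref{thm:comparison} actually need (the term $2(t_{\ep,\alpha} - s_{\ep,\alpha})/\alpha$ enters as the time-slope of a test function and is controlled through CIL-type bookkeeping). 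I would therefore establish the quadratic version and proceed accordingly.
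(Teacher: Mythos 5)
Your proof of \eqref{limpep} is correct and follows the same route as the paper: extract a subsequential limit in $\alpha$, use the bound on the penalty plus USC to pass to the limit and show the $\alpha$-limit point maximizes $W(x,y,t,t)+\xi_\ep(t,t)$, then send $\ep\to 0$ and invoke strictness. (The paper essentially cites Lemma~4.1 of Crandall \cite{C} for the $\alpha$-step rather than writing it out; your version is the standard proof of that cited result.)

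Your concern about \eqref{talpha=salpha} is legitimate and you have, in effect, found a genuine slip in the paper. The paper justifies \eqref{talpha=salpha} by citing the same Crandall lemma, but that lemma yields only $(t_{\ep,\alpha}-s_{\ep,\alpha})^2/\alpha \to 0$ (your monotonicity computation of $M_\alpha - M_{\alpha/2}$ is exactly the proof), which is strictly weaker than $|t_{\ep,\alpha}-s_{\ep,\alpha}|/\alpha\to 0$; neither the cited lemma nor anything in the hypotheses (USC $W$, $\xi_\ep$ uniformly small) delivers the linear rate. Your reading of the downstream use is also on the right track. In Step~4 of the proof of Lemma~\ref{lem:subsolution} the paper writes the supersolution inequality with the term $-2(t_{\ep,\alpha}-s_{\ep,\alpha})/\alpha$ and then subtracts, producing $4(t_{\ep,\alpha}-s_{\ep,\alpha})/\alpha$, which it disposes of by invoking \eqref{talpha=salpha}. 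However, if one computes the time-derivative of the supersolution test function $(y,s)\mapsto -\phi(x_{\ep,\alpha},y,t_{\ep,\alpha})-(t_{\ep,\alpha}-s)^2/\alpha + \dots$, the sign on $2(t_{\ep,\alpha}-s_{\ep,\alpha})/\alpha$ comes out $+$, not $-$, and then the two $2(t_{\ep,\alpha}-s_{\ep,\alpha})/\alpha$ contributions cancel upon subtraction. After this correction, the argument needs only $|t_{\ep,\alpha}-s_{\ep,\alpha}|\to 0$ (to control the $H_\ep(\cdot,s_{\ep,\alpha},\cdot)-H_\ep(\cdot,t_{\ep,\alpha},\cdot)$ term), which follows from your quadratic estimate. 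So your proposal to replace \eqref{talpha=salpha} by $(t_{\ep,\alpha}-s_{\ep,\alpha})^2/\alpha\to 0$ is both what is provable and what is actually needed.
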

\begin{proof}[Proof of Lemma \ref{lem:auxsubsoln}]
Suppose $(x_{\ep,\alpha}, y_{\ep,\alpha}, t_{\ep,\alpha}, s_{\ep,\alpha})$ has a subsequential limit $(\bar{x}_\ep,\bar{y}_\ep, \bar{t}_\ep,\bar{s}_\ep)\in \bar{B}_\gamma(x_0,y_0,t_0)$ as $\alpha\rightarrow 0$. According to an argument standard to viscosity theory (see, for example, Lemma 4.1 of Crandall \cite{C}), we have 
\begin{equation}
\label{tbar=sbar}
\lim_{\alpha\rightarrow 0}\frac{|t_{\ep,\alpha}- s_{\ep,\alpha}|}{\alpha}=0,
\end{equation}
(so in particular $\bar{t}_\ep=\bar{s}_\ep$), and $(\bar{x}_\ep,\bar{y}_\ep, \bar{t}_\ep,\bar{t}_\ep)$ is a local maximum of 
\[
(x,y,t,t)\mapsto W(x,y,t,t)-\xi_\ep(t,t).
\]
Now let us suppose $(\bar{x}_\ep,\bar{y}_\ep, \bar{t}_\ep)$ has a subsequential limit $(\bar{x}, \bar{y}, \bar{t})\in \bar{B}_\gamma(x_0,y_0,t_0)$. Then, because $(\bar{x}_\ep,\bar{y}_\ep, \bar{t}_\ep,\bar{t}_\ep)$ is a local maximum of the above function, we find,
\[
W(x_0, y_0,t_0,t_0)-\xi_\ep(t_0,t_0) \leq W(\bar{x}_\ep,\bar{y}_\ep, \bar{t}_\ep,\bar{t}_\ep)-\xi_\ep( \bar{t}_\ep,\bar{t}_\ep).
\]
Let us now take $\limsup_{\ep\rightarrow 0}$. We use that $W$ is USC and the fact that $\xi_\ep$ converges uniformly to zero to find,
\[
W(x_0, y_0,t_0,t_0)\leq W(\bar{x}, \bar{y}, \bar{t}, \bar{t}).
\]
But, by assumption, $(x_0, y_0,t_0,t_0)$ was a strict local maximum, hence we must have
\[ 
(x_0, y_0,t_0,t_0)=(\bar{x}, \bar{y}, \bar{t}, \bar{t}).
\]
This establishes (\ref{limpep}). 
\end{proof}

\begin{proof}[Proof of Lemma \ref{lem:subsolution}]
\textbf{Step 1}. Let $H$, $\bar{H}$ and $w$ be as in the statement of the lemma. Our assumptions on $V$ imply,
\begin{equation}
\label{barHH}
\bar{H}(x,y,t,p, q)\leq H(x,t,p)-H(y,t,-q).
\end{equation} 
Let $\phi\in C^1(Q)$ and $(x_0,y_0, t_0)\in Q$. We denote,
\[
X_0=(x_0,y_0), p_0=D_x\phi(X_0,t_0), q_0=D_y\phi(X_0,t_0),
\]
and assume 
\[
(x,y,t)\mapsto w(x,y,t)+\int_0^tb(s)\, ds - \phi(x,y,t)
\]
has a  local maximum at $(x_0,y_0,t_0)$ which is strict in $B_\gamma(x_0,y_0,t_0)$. We aim to establish,
\[
\phi_t(x_0,y_0,t_0)+\bar{H}(x_0,y_0, p_0,q_0)\leq 0.
\]

\textbf{Step 2.} Let $\delta>0$. Let $H_\ep$ and $b_\ep$ be as in Lemma \ref{lem:Hep}, in particular,
\begin{equation}
\label{bedefn}
b_\ep(t)=\sup_{(x,p)\in B_\delta(x_0,p_0)}|H_\ep(x,t,p)-H(x,t,p)|.
\end{equation}

Let $(x_{\ep,\alpha}, y_{\ep,\alpha}, t_{\ep,\alpha},  s_{\ep,\alpha})$ be the maximum of 
\[
u(x,t)-v(y,t) -\int_0^t b_\ep(r)\, dr - \int_0^s -b_\ep(r)\, dr -\phi(x,y,t)-\frac{(t-s)^2}{\alpha}.
\]
 on $B_\gamma(x_0,y_0,t_0)$. We apply Lemma \ref{lem:auxsubsoln}  with 
\[
\xi_\ep(t,s)= -\int_0^t b_\ep(r)\, dr - \int_0^s b_\ep(r)\, dr
\]
to find,
\[
\lim_{\ep\rightarrow 0}\lim_{\alpha\rightarrow 0} |(x_{\ep,\alpha}, y_{\ep,\alpha}, t_{\ep,\alpha}, s_{\ep,\alpha})-(x_0,y_0,t_0, t_0)|=0.
\]
(We show why $\xi_\ep$ satisfies the hypotheses of Lemma \ref{lem:auxsubsoln}. We have, 
\[
\sup_{t,s}|\xi_\ep(t,s)|\leq \sup_{t,s}(\int_0^t|b_\ep(r)|\, dr+\int_0^s|b_\ep(r)|\, dr)\leq 2\int_0^1|b_\ep(r)|\, dr,
\]
and according to   Lemma \ref{lem:Hep},  $b_\ep$ converges to $0$ in $L^1(0,T)$. Therefore $\xi_\ep$ converges uniformly to zero and the hypotheses of Lemma \ref{lem:auxsubsoln} are satisfied.)

\textbf{Step 3.} 
 Let us use $p_{\ep,\alpha}$ to denote $D_x\phi(x_{\ep,\alpha}, t_{\ep,\alpha})$ and  $q_{\ep,\alpha}$ to denote $D_y\phi(y_{\ep,\alpha}, s_{\ep,\alpha})$. We claim that, for $\ep$ and $\alpha$ small enough, we have,
\begin{equation}
\label{H-}
(H_\ep, b_\ep)\in H^-(x_{\ep,\alpha}, t_{\ep,\alpha}, p_{\ep,\alpha})
\end{equation}
and
\begin{equation}
\label{H+}
((y,s,q)\mapsto (H_\ep(x_{\ep,\alpha}, s, p_{\ep,\alpha}) - \bar{H}(x_{\ep,\alpha}, y,  p_{\ep,\alpha}, -q), b_\ep(s)))\in H^+(x_{\ep,\alpha}, s_{\ep,\alpha},p_{\ep,\alpha}).
\end{equation}
Indeed, we first observe that, since 
\[
(x_{\ep,\alpha}, y_{\ep,\alpha}, t_{\ep,\alpha}, s_{\ep,\alpha})\rightarrow(x_0,y_0,t_0, t_0)
\] 
and $\phi$ is continuous, we have 
\[
(p_{\ep,\alpha}, q_{\ep,\alpha})\rightarrow (p,q).
\]
Therefore, for $\ep$ and $\alpha$ small enough, 
\begin{equation}
\label{BepB0}
B_{\delta/2}(x_{\ep,\alpha}, y_{\ep,\alpha}, t_{\ep,\alpha}, s_{\ep,\alpha}, p_{\ep,\alpha}, q_{\ep,\alpha})\subset B_\delta((x_0,y_0,t_0, t_0, p_0,q_0)).
\end{equation}

According to the definition of $b_\ep$ (line (\ref{bedefn})), we have,
\[
H_\ep(x,t,p)-b_\ep(t)\leq H(x,t,p)
\]
for all $0\leq t\leq T$ and for $(x,p)\in B_\delta(x_0,p_0)$. By (\ref{BepB0}), we have $B_{\delta/2}(x_{\ep,\alpha}, p_{\ep,\alpha})\subset B_\delta(x_0,p_0)$, hence the previous line holds on $B_{\delta/2}(x_{\ep,\alpha}, p_{\ep,\alpha})$ and for $0\leq t\leq T$. This implies that (\ref{H-}) holds.

We now establish (\ref{H+}). For this we recall (\ref{barHH}), which reads, with $-q$ instead of $q$, $s$ instead of $t$,
\[
\bar{H}(x_{\ep,\alpha},y,p_{\ep,\alpha},-q)\leq H(x_{\ep,\alpha},s,p_{\ep,\alpha})-H(y,s,q).
\]
Rearranging we find,
\[
H(y,s,q)\leq H(x_{\ep,\alpha},s,p_{\ep,\alpha}) - \bar{H}(x_{\ep,\alpha},y,p_{\ep,\alpha},-q)-b(t).
\]
 We use the definition of $b_\ep$ to bound from above the first term on the right-hand side of the previous line and find,
\[
H(y,s,q)\leq H_\ep(x_{\ep,\alpha},s,p_{\ep,\alpha}) +b_\ep(s) - \bar{H}(x_{\ep,\alpha},y,p_{\ep,\alpha},-q)
\]
on $B_\delta(x_{\ep,\alpha}, p_{\ep,\alpha})$, and hence on $B_{\delta/2}(y_{\ep,\alpha}, -q_{\ep,\alpha})$ and for   all $t$.

\textbf{Step 4.} Since $(x_{\ep,\alpha}, y_{\ep,\alpha}, t_{\ep,\alpha},  s_{\ep,\alpha})$ is a maximum of $\Psi_{\ep,\alpha}$, we have that $(x_{\ep,\alpha}, t_{\ep,\alpha})$ is  a local maximum of 
\[
(x,t)\mapsto u(x,t)-\left(\int_0^tb_\ep(r)\,dr +\psi(x,y_{\ep,\alpha}, t) +\frac{(t-s_{\ep,\alpha})^2}{\alpha}\right),
\]
and $(y_{\ep,\alpha}, s_{\ep,\alpha})$ is a local minimum of,
\[
(y,s)\mapsto v(y,s)-\left(\int_0^s(b(r)-b_\ep(r))\, dr-\phi(x_{\ep,\alpha}, y, t_{\ep,\alpha}) - \frac{(t_{\ep,\alpha}-s)^2}{\alpha}\right).
\]
Together with (\ref{H-}) and (\ref{H+}), and the fact that $u$ and $v$ are, respecively, a sub- and a super-solution, this implies:
\[
\phi_t(x_{\ep,\alpha}, y_{\ep,\alpha}, t_{\ep,\alpha}) +2\frac{t_{\ep,\alpha}-s_{\ep,\alpha}}{\alpha} + H_\ep(x_{\ep,\alpha}, s_{\ep,\alpha}, D_x\phi(x_{\ep,\alpha}, y_{\ep,\alpha}, t_{\ep,\alpha}))\leq 0,
\]
and
\[
- 2\frac{t_{\ep,\alpha}-s_{\ep,\alpha}}{\alpha} + H_\ep(x_{\ep,\alpha}, t_{\ep,\alpha}, p_{\ep,\alpha}) - \bar{H}(y_{\ep,\alpha}, x_{\ep,\alpha},  p_{\ep,\alpha}, D_y\phi(y_{\ep,\alpha}, x_{\ep,\alpha}, t_{\ep,\alpha}))\geq 0.
\]
Subtracting the previous line from the one before, and rearranging gives,
\begin{align*}
\phi_t(x_{\ep,\alpha}, y_{\ep,\alpha}, t_{\ep,\alpha}) &+4\frac{t_{\ep,\alpha}-s_{\ep,\alpha}}{\alpha} +\bar{H}(x_{\ep,\alpha}, y_{\ep,\alpha},  p_{\ep,\alpha}, D_y\phi(y_{\ep,\alpha}, x_{\ep,\alpha}, t_{\ep,\alpha})) 
\\&\leq  H_\ep(x_{\ep,\alpha}, s_{\ep,\alpha}, p_{\ep,\alpha}) - H_\ep(x_{\ep,\alpha}, t_{\ep,\alpha}, D_x\phi(x_{\ep,\alpha}, y_{\ep,\alpha}, t_{\ep,\alpha})).
\end{align*}
Let us recall that $p_{\ep,\alpha}=D_x\phi(x_{\ep,\alpha}, y_{\ep,\alpha}, t_{\ep,\alpha})$. Let us also recall that, according to Lemma \ref{lem:auxsubsoln}, we have $
|t_{\ep,\alpha}-s_{\ep,\alpha}|\rightarrow 0$ as $\alpha\rightarrow 0$.
 
We now take the limit as $\alpha\rightarrow 0$ of the previous line. The second term on the left-hand side converges to zero. In addition, because, for fixed $\ep$, $H_\ep$ is continuous in $t$, we find that the righthand side converges to zero. Thus we obtain,
\[
\lim_{\alpha\rightarrow 0 }(\phi_t(x_{\ep,\alpha}, y_{\ep,\alpha}, t_{\ep,\alpha}) 
+\bar{H}(x_{\ep,\alpha}, y_{\ep,\alpha}, p_{\ep,\alpha}, D_y\phi(y_{\ep,\alpha}, x_{\ep,\alpha}, t_{\ep,\alpha})))\leq 0.
\]
Now taking the limit $\ep\rightarrow 0$, using (\ref{limpep}) of Lemma \ref{lem:auxsubsoln} and the fact that $\phi$ and $\bar{H}$ are continuous yields,
\[
\phi_t(x_0,y_0,t_0)+\bar{H}(x_0,y_0,D_x\phi(x_0,y_0,t_0), D_y\phi(x_0,y_0,t_0))\leq 0,
\]
as desired.

\end{proof}

The remainder of the proof of the theorem, including the set-up that is presented here and the presentation, closely follows Stromberg \cite{S}. In fact, the ideas we use from there can also be seen in the earlier work \cite{CIL} of Crandall, Ishii and  Lions.

For the remainder of this section we take $r\in [0, \infty)$ and $\alpha\in (0, \infty)$. Let us use $\theta$ to denote,
\[
\theta(r)=
\begin{cases}
0 \text{ if }r=0,\\
\infty \text{ otherwise}.
\end{cases}
\]
We will define $\theta_\alpha(r)$ that converges to $\theta(r)$ as $\theta\rightarrow \infty$ for all $r$. 
We first define $G_\alpha$ by,
\[
G_\alpha(r)=\frac{1}{\alpha}\exp(-\int_r^1\frac{1}{\sigma(s)}\, ds ),
\]
and then,
\[
\theta_\alpha(r)= G_\alpha((\alpha^2+r^2)^{1/2}). 
\]
For future use we compute, for $r\in [0,1]$,
\[
\int_r^1\frac{1}{\sigma(s)}\, ds = \int_r^1\frac{1}{Ns|\ln(s)|}\, ds = -\frac{1}{N}\int_{|\ln(r)|}^0 \frac{1}{u}\, du = \frac{1}{N}[\ln(u)]_0^{|\ln(r)|} = \frac{1}{N}\ln(|\ln (r)|),
\]
so that 
\[
G_\alpha(r)=\frac{1}{\alpha}\exp(-\frac{1}{N}\ln(|\ln (r)|) ) = \frac{e^{-\frac{1}{N}}}{\alpha} \frac{1}{|\ln r|}.
\]
According to the definition of $\theta_\alpha$ we have,
\begin{equation}
\label{theta explicit}
\theta_\alpha(r)=    G_\alpha((\alpha^2+r^2)^{1/2})=   \frac{e^{-\frac{1}{N}}}{\alpha} \frac{1}{|\ln (\alpha^2+r^2)^{1/2}|}.
\end{equation}

We summarize some  properties of $\theta_\alpha$ in the following lemma.
\begin{lem}
\label{lem:properties theta}
Let $\theta_\alpha$ be defined as above. We have,
\begin{enumerate}
\item \label{item:basic theta} $\theta_\alpha$ is non-negative, increasing and smooth on $[0, \infty)$ for each $\alpha>0$,
\item \label{item: derivs}$D_x\theta_\alpha(|x-y|)=-D_y\theta_\alpha(|x-y|)$, 
\item \label{item: deriv bd}
\begin{equation}
\label{Dtheta}
|D_x\theta_\alpha(|x-y|)|\leq \frac{\theta_\alpha(|x-y|)}{\sigma(|x-y|)},
\end{equation}
\item \label{item:pointwise}
$\theta_\alpha(r)\rightarrow \theta(r)$ for all $r$ as $\alpha\rightarrow 0$, 
\item \label{item:otherconv} if $r_\alpha\rightarrow r$, then
\begin{equation}
\label{liminfthetaalpha}
\theta(r)\leq \liminf_{\alpha\rightarrow 0}\theta_\alpha(r_\alpha).
\end{equation}
\end{enumerate}
 \end{lem}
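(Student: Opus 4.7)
The plan is to obtain all five items by direct computation from the explicit formula $\theta_\alpha(r) = G_\alpha(\psi(r))$ with $\psi(r) := (\alpha^2 + r^2)^{1/2}$. For item (\ref{item:basic theta}), smoothness of $\theta_\alpha$ follows from smoothness of $G_\alpha$ and $\psi$ on the relevant range, and positivity is inherited from $G_\alpha > 0$. Monotonicity comes from
\[
\theta_\alpha'(r) = G_\alpha'(\psi(r))\,\psi'(r) = \frac{G_\alpha(\psi(r))}{\sigma(\psi(r))}\cdot \frac{r}{\psi(r)} \geq 0,
\]
where I use the ODE $G_\alpha'(s) = G_\alpha(s)/\sigma(s)$, which is immediate by differentiating the defining integral formula. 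For item (\ref{item: derivs}), the identity $D_x|x-y| = -D_y|x-y|$ (for $x \neq y$) combined with the chain rule gives the claim at once.

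For item (\ref{item: deriv bd}), the chain rule yields $|D_x\theta_\alpha(|x-y|)| = \theta_\alpha'(|x-y|)$, and the computation above rewrites this as
\[
\theta_\alpha(r)\cdot \frac{r}{\sigma(\psi(r))\,\psi(r)}\bigg|_{r=|x-y|}.
\]
Thus the desired bound reduces to showing $r\,\sigma(r) \leq \psi(r)\,\sigma(\psi(r))$. Since $\psi(r) \geq r$, it suffices to observe that $s \mapsto s\,\sigma(s) = N s^2 |\ln s|$ is non-decreasing in the relevant range: differentiation gives $\tfrac{d}{ds}(s\,\sigma(s)) = Ns(2|\ln s| - 1) > 0$ for $s < e^{-1/2}$, which covers the range we use.

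Items (\ref{item:pointwise}) and (\ref{item:otherconv}) follow from the explicit formula by elementary limit arguments. For fixed $r > 0$, one has $|\ln(\alpha^2+r^2)^{1/2}| \to |\ln r| \in (0,\infty)$ as $\alpha \to 0$, so $\theta_\alpha(r)$ is of order $\alpha^{-1}$ and blows up to $\infty = \theta(r)$; the same computation applied to a sequence $r_\alpha \to r > 0$ yields the $r > 0$ case of item (\ref{item:otherconv}). The case $r = 0$ in item (\ref{item:otherconv}) is trivial from $\theta_\alpha \geq 0 = \theta(0)$. The main obstacle in this lemma is isolating the right monotonicity property of $s\,\sigma(s)$ used in item (\ref{item: deriv bd}); once that is in hand, the other items are routine manipulations of the explicit formula.
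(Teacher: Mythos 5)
Your overall approach matches the paper's, and items~(\ref{item:basic theta}), (\ref{item: derivs}), (\ref{item: deriv bd}), and~(\ref{item:otherconv}) are handled correctly. Two remarks are in order.

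On item~(\ref{item: deriv bd}), your route is correct but more involved than necessary. You keep the factor $r/\psi(r)$, clear denominators, and reduce to the monotonicity of $s\mapsto s\sigma(s)$, which you then verify by differentiation. The paper instead drops $r/\psi(r)\leq 1$ at once, obtaining $|\theta_\alpha'(r)|\leq \theta_\alpha(r)/\sigma(\psi(r))$, and then uses only that $\sigma$ itself is increasing to pass from $\sigma(\psi(r))$ to $\sigma(r)$. Note that $r\sigma(r)\leq \psi(r)\sigma(\psi(r))$ already follows directly from the two inequalities $r\leq\psi(r)$ and $\sigma(r)\leq\sigma(\psi(r))$ multiplied together, so the derivative computation for $s\sigma(s)$ is not needed; it is a true but superfluous auxiliary fact.

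The substantive gap is in item~(\ref{item:pointwise}). The claim is $\theta_\alpha(r)\to\theta(r)$ \emph{for all} $r$, and you only treat $r>0$ (where $\theta_\alpha(r)\to\infty$). The case $r=0$, i.e.\ $\theta_\alpha(0)\to 0 = \theta(0)$, is not addressed, yet it is precisely the case that gets used downstream in Lemma~\ref{lem:penalize}: at the maximum point of $W-\theta$ one necessarily has $\bar{x}=\bar{y}$, so the convergence of $\theta_\alpha$ is invoked at $r=0$. You do handle $r=0$ in item~(\ref{item:otherconv}), but there it is trivial since $\theta(0)=0\leq\theta_\alpha\geq 0$; in item~(\ref{item:pointwise}) the direction of the required bound is the opposite (one needs $\theta_\alpha(0)$ to be \emph{small}, not just nonnegative), so the same observation does not apply. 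You should spell out the $r=0$ limit explicitly from the formula $\theta_\alpha(0)=G_\alpha(\alpha)$, as the paper does (note that this is precisely the delicate point of the construction of $\theta_\alpha$, and it deserves care rather than being folded into ``elementary limit arguments'').
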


\begin{proof} 
Item (\ref{item:basic theta}) holds because  $G_\alpha$ is non-negative, increasing and smooth on $[0, \infty)$. 

Item (\ref{item: derivs}) follows directly from the definition of $\theta_\alpha$ and item (\ref{item:basic theta}).
  
To establish item (\ref{item: deriv bd}), we compute,
\[
G_\alpha'(r)=\frac{G_\alpha(r)}{\sigma(r)},
\]
and so,
\begin{align*}
\theta_\alpha'(r) &= G_\alpha'((\alpha^2+r^2)^{1/2})\frac{r}{(\alpha^2+r^2)^{1/2}}=\frac{G_\alpha((\alpha^2+r^2)^{1/2})}{\sigma((\alpha^2+r^2)^{1/2})}\frac{r}{(\alpha^2+r^2)^{1/2}}.
\end{align*}
Taking absolute value yields,
\begin{equation*}
|\theta_\alpha'(r) |\leq \frac{G_\alpha((\alpha^2+r^2)^{1/2})}{\sigma((\alpha^2+r^2)^{1/2})}=\frac{\theta_\alpha(r)}{\sigma((\alpha^2+r^2)^{1/2})},
\end{equation*}
where the equality follows from the definition of $\theta$. Since $\sigma$ is increasing, we have $\sigma((\alpha^2+r^2)^{1/2})\geq \sigma(r)$, and hence we find,
\begin{equation}
\label{theta prime r}
|\theta_\alpha'(r) |\leq \frac{\theta_\alpha(r)}{\sigma(r)}.
\end{equation}
Now let us consider $\theta_\alpha(|x-y|)$ for $x,y\in \rr^n$. We have,
\[
D_x\theta_\alpha(|x-y|)= \theta_\alpha'(|x-y|)\frac{(x-y)}{|x-y|},
\]
so taking absolute value and then applying (\ref{theta prime r}) with $r=|x-y|$ yields item (\ref{item: deriv bd}).

To establish item (\ref{item:pointwise}), we take $r=0$ in (\ref{theta explicit}) and 
then take the limit $\alpha\rightarrow \infty$, to find,
\[
\lim_{\alpha\rightarrow 0} \theta_\alpha(0)=  \lim_{\alpha\rightarrow 0} \frac{1}{\alpha} \frac{1}{|\ln \alpha|}=0.
\]
If $r\neq 0$, then, again using (\ref{item:pointwise}), we find,
\[
\lim_{\alpha\rightarrow 0} \theta_\alpha(r)= \lim_{\alpha\rightarrow 0} \frac{e^{-\frac{1}{N}}}{\alpha} \frac{1}{|\ln (\alpha^2+r^2)^{1/2}|} =\infty.
\]
This means that $\theta_\alpha$ converges pointwise to $\theta$, as desired.

To establish item (\ref{item:otherconv}), we consider the cases $r=0$ and $r\neq 0$. First, let us suppose $r=0$. Then (\ref{liminfthetaalpha}) holds because its left-hand side is zero, and we already know that the $\theta_\alpha$ are non-negative. Now let us suppose $r\neq 0$. Then for all $\alpha$ small enough we have $r_\alpha\geq r/2$. Since $\theta_\alpha$ is increasing for all $\alpha$ we find,
\[
\theta_\alpha(r_\alpha)\geq \theta_\alpha(r/2).
\]
Taking $\liminf$ yields,
\[
\liminf_{\alpha\rightarrow 0}\theta_\alpha(r_\alpha)\geq \liminf_{\alpha\rightarrow 0} \theta_\alpha(r/2)  = \infty = \theta(r),
\]
where the equalities follow from the first part of the lemma.
\end{proof}

We will abuse notation slightly and use $\theta(x,y,t)$ as well as $\theta(x,y)$ to mean $\theta(|x-y|)$, and similarly for $\theta_\alpha$.
\begin{lem}
\label{lem:penalize}
Let $K$ be a compact subset of $\rr^n$ and let $W$ be upper-semicontinuous and bounded on  $K\times K\times [0,T]$.  Let $\Psi_\alpha$ denote,
\[
\Psi_\alpha(x,y,t)=W(x,y,t)-\theta_\alpha(|x-y|).
\]
Let $(x_\alpha, y_\alpha, t_\alpha)\in K\times K\times [0,T]$ be where the maximum of $\Psi_\alpha$ is achieved on $K\times K\times [0,T]$. Suppose that, along a subsequence, the $(x_\alpha, y_\alpha, t_\alpha)$ converge to some $(\bar{x}, \bar{y}, \bar{t})$ as $\alpha\rightarrow 0$. Then:
\begin{equation} \label{itembx=by}
\bar{x}=\bar{y},
\end{equation}
\begin{equation}
\label{item maxW=W(bar)}
\max_{x\in K,\, t\in [0,T]} W(x,x,t)= W(\bar{x}, \bar{x}, \bar{t}),
\end{equation} 
and,
\begin{equation}\label{itemlimtheta}
 \lim_{\alpha\rightarrow 0}\theta_\alpha(|x_\alpha-y_\alpha|)=0.
 \end{equation}

\end{lem}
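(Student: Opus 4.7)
The plan is the standard penalization argument: compare the value of $\Psi_\alpha$ at its maximizer with its value at a good diagonal point, extract a uniform bound on the penalty term, and pass to the limit using the pointwise properties of $\theta_\alpha$ already established in Lemma \ref{lem:properties theta}.

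First, I would introduce $M := \max_{x\in K,\, t\in[0,T]} W(x,x,t)$, which is attained at some $(x^*,t^*)$ because $W$ is upper semicontinuous on the compact set $K\times[0,T]$. Since $(x_\alpha,y_\alpha,t_\alpha)$ maximizes $\Psi_\alpha$ on $K\times K\times[0,T]$, comparing with the test point $(x^*,x^*,t^*)$ gives
\[
W(x_\alpha,y_\alpha,t_\alpha)-\theta_\alpha(|x_\alpha-y_\alpha|)\;\geq\; W(x^*,x^*,t^*)-\theta_\alpha(0)=M-\theta_\alpha(0),
\]
which I rearrange as
\[
\theta_\alpha(|x_\alpha-y_\alpha|)\;\leq\; W(x_\alpha,y_\alpha,t_\alpha)-M+\theta_\alpha(0). \qquad (\star)
\]

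Next I would take the $\limsup$ along the subsequence $(x_\alpha,y_\alpha,t_\alpha)\to(\bar x,\bar y,\bar t)$. Upper semicontinuity of $W$ gives $\limsup W(x_\alpha,y_\alpha,t_\alpha)\leq W(\bar x,\bar y,\bar t)$, while item (\ref{item:pointwise}) of Lemma \ref{lem:properties theta} yields $\theta_\alpha(0)\to 0$, and item (\ref{item:otherconv}) applied with $r_\alpha=|x_\alpha-y_\alpha|\to|\bar x-\bar y|$ yields $\liminf \theta_\alpha(|x_\alpha-y_\alpha|)\geq \theta(|\bar x-\bar y|)$. Since $\limsup\geq\liminf$, passing to the limit in $(\star)$ produces the single inequality
\[
\theta(|\bar x-\bar y|)\;\leq\; W(\bar x,\bar y,\bar t)-M,
\]
from which all three conclusions will fall out.

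The three claims then follow in sequence. Since $W$ is bounded while $\theta(r)=\infty$ for $r>0$, the above inequality forces $\bar x=\bar y$, which is \eqref{itembx=by}. With $\bar x=\bar y$, the inequality collapses to $W(\bar x,\bar x,\bar t)\geq M$, and the reverse inequality holds by definition of $M$, yielding \eqref{item maxW=W(bar)}. For \eqref{itemlimtheta}, I return to $(\star)$: the right-hand side has $\limsup$ bounded above by $W(\bar x,\bar x,\bar t)-M=0$, while $\theta_\alpha\geq 0$ by item (\ref{item:basic theta}), so the squeeze gives $\theta_\alpha(|x_\alpha-y_\alpha|)\to 0$. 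I do not anticipate any real obstacle; the whole argument is essentially a direct unpacking of the penalization inequality using the properties of $\theta_\alpha$ already in hand, with the only subtlety being the need to use the one-sided bounds from upper semicontinuity and from Lemma \ref{lem:properties theta}(\ref{item:otherconv}) rather than full convergence.
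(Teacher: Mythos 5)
Your argument is correct and is essentially the paper's own proof, just streamlined: where you compare $\Psi_\alpha$ at its maximizer directly with a diagonal point $(x^*,x^*,t^*)$ maximizing $W$ on the diagonal, the paper compares with the maximizer $(\tilde x,\tilde y,\tilde t)$ of $W-\theta$ (which is necessarily on the diagonal, so the two comparisons coincide), and both then take $\limsup$ using upper semicontinuity and Lemma \ref{lem:properties theta}. One small caveat: you reuse the symbol $M$ for $\max_{x,t}W(x,x,t)$, but $M$ already denotes the velocity bound in (\ref{bdsV}), so a different letter would be safer.
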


\begin{proof}
We first establish 
\begin{equation}
\label{barxgeqxalpha}
(W-\theta)(\bar{x}, \bar{y}, \bar{t})\geq \limsup_{\alpha\rightarrow 0}(W-\theta_\alpha)(x_\alpha, y_\alpha, t_\alpha).
\end{equation} 
To this end, we observe,
\begin{equation}
\label{limsup+}
\limsup_{\alpha\rightarrow 0}(W-\theta_\alpha)(x_\alpha, y_\alpha, t_\alpha)
= \limsup_{\alpha\rightarrow 0}W(x_\alpha, y_\alpha, t_\alpha)- \liminf_{\alpha\rightarrow 0}\theta_\alpha(x_\alpha, y_\alpha, t_\alpha).
\end{equation}
Since $W$ is upper-semicontinuous, the first term on the right-hand side of the previous line is bounded from above by $W(\bar{x}, \bar{y}, \bar{t})$. To bound the second term, we use item (\ref{item:otherconv}) of Lemma \ref{lem:properties theta} with $r_\alpha=|x_\alpha- y_\alpha|$ and $r=|\bar{x}-\bar{y}|$ to find,
\[
\liminf_{\alpha\rightarrow 0}\theta_\alpha(x_\alpha, y_\alpha, t_\alpha)\geq \theta(\bar{x}, \bar{y}, \bar{t}). 
\]
Thus (\ref{limsup+}) implies,
\[
\limsup_{\alpha\rightarrow 0}(W-\theta_\alpha)(x_\alpha, y_\alpha, t_\alpha)\leq W(\bar{x}, \bar{y}, \bar{t}) -\theta(\bar{x}, \bar{y}, \bar{t}),
\]
which is exactly (\ref{barxgeqxalpha}).

We proceed with the proof of the lemma. Let us use $(\tilde{x}, \tilde{y}, \tilde{t})$ to denote the place where the maximum of $W-\theta$ is achieved on $K\times K\times [0,T]$. Since $(x_\alpha, y_\alpha, t_\alpha)$ is the maximum of $W-\theta_\alpha$, we find,
\[
(W-\theta_\alpha)(x_\alpha, y_\alpha, t_\alpha)\geq (W-\theta_\alpha)(\tilde{x}, \tilde{y}, \tilde{t}). 
\]
We take $\limsup_{\alpha\rightarrow 0}$ of both sides. Using item (\ref{item:pointwise}) of Lemma \ref{lem:properties theta} on the right-hand side of the previous line yields,
\[
\limsup_{\alpha\rightarrow 0} (W-\theta_\alpha)(x_\alpha, y_\alpha, t_\alpha) 
\geq (W-\theta)(\tilde{x}, \tilde{y}, \tilde{t}).
\]
The definition of $(\tilde{x}, \tilde{y}, \tilde{t})$ yields,
\begin{equation}
\label{limWthetaMaxWtheta}
\limsup_{\alpha\rightarrow 0} (W-\theta_\alpha)(x_\alpha, y_\alpha, t_\alpha) 
\geq \max_{K\times K\times [0,T]} (W-\theta) 
\end{equation}
We use the previous line to bound from below the right-hand side of (\ref{barxgeqxalpha}) and obtain,
\begin{equation}
\label{maxKK}
(W-\theta)(\bar{x}, \bar{y}, \bar{t})\geq \max_{K\times K\times [0,T]} (W-\theta).
\end{equation}

We now establish (\ref{itembx=by}). Indeed, if $\bar{x}\neq \bar{y}$, then $\theta(\bar{x}, \bar{y}, \bar{t})=\theta(|x-y|)=\infty$. Therefore, (\ref{maxKK}) implies,
\[
\max_{K\times K\times [0,T]} (W-\theta) \leq -\infty.
\]
However, since $\theta(0)=0$ we have,
\[
\max_{K\times K\times [0,T]}  (W-\theta)\geq \max_{x\in K, t\in [0,T]} (W-\theta)(x,x,t)
= \max_{x\in K, t\in [0,T]} W(x,x,t)>-\infty.
\]
Hence $\bar{x}=\bar{y}$ must hold.

Now we notice that the right-hand side of (\ref{maxKK}) is bounded from below by the maximum of the same quantity but for points $(x,x)\in K\times K$. And, we use that $\bar{x}=\bar{y}$ to rewrite the left-hand side. We find,
\[
(W-\theta)(\bar{x}, \bar{x}, \bar{t})=(W-\theta)(\bar{x}, \bar{y}, \bar{t})\geq \max_{K\times K\times [0,T]} (W-\theta) \geq \max_{x\in K,\, t\in [0,T]} (W-\theta)(x,x,t).
\]
But the right-hand side of the previous line is bounded from above by the left-hand side, and therefore equality must hold for all the items:
\begin{equation} 
\label{fouritems}
(W-\theta)(\bar{x}, \bar{x}, \bar{t})=(W-\theta)(\bar{x}, \bar{y}, \bar{t})= \max_{K\times K\times [0,T]} (W-\theta) = \max_{x\in K,\, t\in [0,T]} (W-\theta)(x,x,t).
\end{equation}
 Moreover, since $\theta(x,x,t)=0$ for all $x$ and $t$, we obtain (\ref{item maxW=W(bar)}).

We will now prove the last claim of the lemma, equation (\ref{itemlimtheta}). 
Line (\ref{limWthetaMaxWtheta}) provides a lower bound for the left-hand side of (\ref{limsup+}), yielding,
\[
\limsup_{\alpha\rightarrow 0} W(x_\alpha, y_\alpha, t_\alpha)-\liminf \theta_\alpha(x_\alpha, y_\alpha, t_\alpha)\geq \max_{K\times K\times [0,T]} (W-\theta).
\]
The last equality in (\ref{fouritems}) yields that the right-hand side of the previous line is exactly $\max_{K\times [0,T]}W(x,x,t)$. Rearranging yields,
\[
\liminf_{\alpha\rightarrow 0} \theta_\alpha(x_\alpha, y_\alpha, t_\alpha) \leq  \limsup_{\alpha\rightarrow 0} W(x_\alpha, y_\alpha, t_\alpha) - \max_{K\times [0,T]}W(x,x,t).
\] 
We now use that $W$ is upper-semicontinuous and that $\bar{x}=\bar{y}$ to find that the right-hand side of the previous line is non-negative, so that,
\[
\liminf_{\alpha\rightarrow 0} \theta_\alpha(x_\alpha, y_\alpha, t_\alpha) \leq 0.
\]
But since $\theta_\alpha$ is non-negative, we have 
\[
\limsup_{\alpha\rightarrow 0} \theta_\alpha(x_\alpha, y_\alpha, t_\alpha) \geq 0,
\]
which, together with the previous line yields (\ref{itemlimtheta}), completing the proof of the lemma.
\end{proof}

We now present:
\begin{proof}[Proof of Theorem \ref{thm:comparison}]
\textbf{Step 1.} In this step we reduce the situation from all of $\rr^n$ to compact subsets $S_\beta$, which we define as the sub-level sets of:
\[
f(x,t)=t+\frac{1}{M}(1+|x|^2)^{1/2}.
\]
Namely, for $\beta>0$ we define the subsets $S_\beta$ of $\rr^n$ as:
\[
S_\beta = \{(x,t)\in \rr^n\times [0,T]: \, f(x,t)\leq \beta\}.
\]
Note $Df=\frac{1}{M}\frac{x}{(1+|x|^2)^{1/2}}$, so that $|Df|<1/M$.

Notice that if $\beta_1\leq\beta_2$ then $S_{\beta_1}\subset S_{\beta_2}$, and hence,
\[
\max_{S_{\beta_1}}(u-v)^+\leq \max_{S_{\beta_2}}(u-v)^+.
\]
We define the function $g$ by,
\[
g(\beta)=\ln(2+\max_{S_{\beta}}(u-v)^+),
\]
and notice that $g$ is positive and non-decreasing. We also define, 
\[
\psi_\beta(x,t)=\exp(g(\beta)(1+f(x,t)-\beta)).
\]
We notice
\[
\lim_{\beta\rightarrow\infty} \psi_\beta(x,t)=0 
\]
for all $(x,t)\in \rr^n\times [0,T]$. We also find,
\begin{equation}
\label{eqpsi}
\partial_t\psi_\beta - M|D\psi_\beta|=\psi_\beta g(\beta) (f_t-M|Df|)>\psi_\beta g(\beta)(1-M\frac{1}{M})=0.
\end{equation}

\textbf{Step 2.} We seek to establish that for all $\beta$,
\[
(u-v-\psi_\beta)(x,t)\leq \sup_{\rr^n}(u-v)^+(x,0) \text{ for }(x,t)\in S_\beta.
\]
If this holds, then we may take the limit $\beta\rightarrow 0$ of the previous line, use that $\lim_{\beta\rightarrow\infty} \psi_\beta(x,t)=0$, and find that the desired inequality holds, completing the proof.

We proceed to establish the previous line by contradiction. Thus, assume that there exists $\beta>0$ such that the previous line fails. This implies that there exists $c>0$ with,
\[
\sup_{(x,t)\in S_\beta}(u(x,t)-v(x,t)-\psi_\beta(x,t)-ct)> \sup_{\rr^n}(u-v)^+(x,0).
\]
We now stop writing the subscript $\beta$, as it is fixed. We use $W(x,y,t)$ to denote,
\[
W(x,y,t)=u(x,t)-v(y,t)-\psi_\beta(y,t)-ct,
\]
so that the previous line becomes,
\begin{equation}
\label{supwithc}
\sup_{(x,t)\in S}W(x,x,t)>\sup_{\rr^n}(u-v)^+(x,0).
\end{equation}

Let us consider,
\[
\Phi_\alpha(x,y,t)=u(x,t)-v(y,t)-\psi_\beta(y,t)-ct- \theta_\alpha(|x-y|),
\]
where $\theta_\alpha$ is as defined in the previous subsection. 
Let $(x_\alpha, y_\alpha, t_\alpha)$ be a maximum of $\Phi_\alpha$ on $S\times S\times [0,T]$. Notice that this exists as $S$ is compact and $\Phi_\alpha$ is upper-semicontinuous.

\textbf{Step 3.} In this step, we establish, for all $\alpha$ small enough:
\begin{itemize}
\item   $t_\alpha>0$, and,
\item  $(x_\alpha, t_\alpha)$ and $(y_\alpha, t_\alpha)$ are contained in the interior of $S$. 
\end{itemize} 
Let us suppose that the first statement is not true. That means that there exists a subsequence $(x_{\alpha_j}, y_{\alpha_j}, t_{\alpha_j})$ converging to $(\bar{x}, \bar{y}, 0)$. We now apply Lemma \ref{lem:penalize}. We find that  $\bar{x}= \bar{y}$ and,
\[
\max_{x\in S,\, t\in [0,T]} W(x,x,t)= W(\bar{x}, \bar{x}, 0)=u(\bar{x},0)-v(\bar{x},0)-\psi(\bar{x},0)\leq u(\bar{x},0)-v(\bar{x},0),
\]
where the second equality follows from the definition of $W$. We now use (\ref{supwithc}) to bound the left-hand side of the previous line from below and find,
\[
\sup_{\rr^n}(u-v)^+(x,0)<u(\bar{x},0)-v(\bar{x},0),
\]
which is impossible. Therefore the first item must hold.

Now let us suppose the second item does not hold. This means that there exists a subsequence $(x_{\alpha_j}, y_{\alpha_j}, t_{\alpha_j})$ converging to $(\bar{x}, \bar{x}, \bar{t})$ with $(\bar{x}, \bar{t})\in \bdry S$ (the  $x_{\alpha_j}$ and $y_{\alpha_j}$ converge to the same point due to Lemma  \ref{lem:penalize}). The definition of $S$ as the sub level set of $f$ implies, 
\[
f(\bar{x}, \bar{t})=\beta.
\]
The definitions of $\psi$ and $g$ therefore imply,
\[
\psi(\bar{x}, \bar{t})=\exp(g(\beta)) = 2+\max_{S}(u-v)^+.
\]
Using this in the definition of $W$ yields,
\[
W(\bar{x}, \bar{x}, \bar{t})=u(\bar{x}, \bar{t})-v(\bar{x}, \bar{t})-\psi(\bar{x}, \bar{t})-c\bar{t}=u(\bar{x}, \bar{t})-v(\bar{x}, \bar{t})-(2+\max_{S}(u-v)^+)-c\bar{t} <0.
\]
We again apply Lemma  \ref{lem:penalize} and find,
\[
\max_{x\in S,\, t\in [0,T]} W(x,x,t)= W(\bar{x}, \bar{x}, \bar{t}).
\]
We had just shown that the right-hand side of the previous line is negative, so we find,
\[
\max_{x\in S,\, t\in [0,T]} W(x,x,t)<0,
\]
which contradicts (\ref{supwithc}), as the right hand side of (\ref{supwithc}) is non-negative.

\textbf{Step 4.} Let us recall that, according to Lemma \ref{lem:subsolution}, we have that $w(x,y,t)=u(x,t)-v(y,t)$ is a subsolution (in the  standard viscosity sense) to 
\[
w_t+\bar{H}(x,y,t,D_xw, D_yw)\leq 0,
\]
with
\[
\bar{H}(x,y,t,p,q)=-\sigma(|x-y|)|p|-M|p+q|.
\] 
The claim we established in step 3, together with the fact that $(x_\alpha, y_\alpha, t_\alpha)$ is a maximum of $\Phi_\alpha$ on $S\times S\times [0,T]$, therefore yields,
\[
\psi_t(y_\alpha,t_\alpha)+c+\bar{H}(x_\alpha,y_\alpha,t_\alpha,D_x\theta_\alpha(|x_\alpha-y_\alpha|), D_y\psi(y_\alpha,t_\alpha)+D_y\theta_\alpha(|x_\alpha-y_\alpha|))\leq 0.
\]
The definition of $\bar{H}$ thus gives,
\[
\psi_t(y_\alpha,t_\alpha)+c-\sigma(|x_\alpha-y_\alpha|)|D_x\theta_\alpha(|x_\alpha-y_\alpha|)|-M|D_y\psi(y_\alpha,t_\alpha)|\leq 0,
\]
where we have also used that $D_x\theta_\alpha(|x_\alpha-y_\alpha|)=-D_y\theta_\alpha(|x_\alpha-y_\alpha|)$. According to (\ref{eqpsi}), the sum of the first and last terms is non-negative, so we find,
\[
c-\sigma(|x_\alpha-y_\alpha|)|D_x\theta_\alpha(|x_\alpha-y_\alpha|)|\leq 0.
\]
We use (\ref{Dtheta}) to estimate the left-hand side from below and find,
\[
c- \sigma(|x_\alpha-y_\alpha|)\frac{\theta_\alpha(|x_\alpha-y_\alpha|)}{\sigma(|x_\alpha-y_\alpha|)}\leq 0,
\]
which becomes,
\[
c\leq \theta_\alpha(|x_\alpha-y_\alpha|).
\]
However, equation (\ref{itemlimtheta}) of Lemma \ref{lem:penalize} says that the limit as $\alpha$ goes to zero of the right hand side of the previous line is zero. This yields the desired contradiction, completing the proof.
\end{proof}

\section{}
\label{first appendix}
In this section we establish  Theorem \ref{thm:existence} Theorem \ref{thm:u and ODE}, and Theorem \ref{thm:discontinuous}.

If $V\in C^1$ satisfies (\ref{bdsV}) and (\ref{hyp:logL}), then classical results (which we summarize in the proposition below) assert that there exists a unique flow $X$ generated by $V$, and relates $X$ to  the equation,
\begin{equation}
\label{epeqn}
\begin{cases}
\partial_t u +V Du =0 \text{ in }\rr^n\times (0,T),\\
u(x,0)=u_0(x),
\end{cases}
\end{equation}
We remark on the notation: the flow $X$ has three arguments: the time variable $t$, as well as the starting time and point, $(x,s)$. 
\begin{prop}
\label{ODE existence}
Let $V\in C^1$ be continuous and satisfy  (\ref{bdsV}) and (\ref{hyp:logL}).
\begin{enumerate}
\item There exists a unique $X:\rr^n\times (0,T)\times (0,T)\rightarrow \rr^n$ satisfying,
\[
X(x,s,t)=x+\int_s^tV(X(x,s,r), r)\, dr 
\]
on $\rr^n\times (0,T)\times (0,T)$. 
\item The maps $x\mapsto X(x,0,t)$ and $x\mapsto X(x,t,0)$ are inverses.
\item Let $u_0\in C^1(\rr^n)$. Then $u$ defined on $\rr^n\times (0,T)$ by,
\[
u(x,t)=u_0(X(x,0,t))
\]
is a classical solution of (\ref{epeqn}). 
\end{enumerate}
\end{prop}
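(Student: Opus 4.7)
The plan is to treat this as a standard application of Cauchy--Lipschitz ODE theory followed by the classical method of characteristics; since $V \in C^1$ no refined tools are needed. For part (1), I fix $(x,s)$ and view the integral equation as the Picard--Lindel\"of fixed-point formulation of the ODE $\dot y(r)=V(y(r),r)$ with $y(s)=x$. Because $V\in C^1$ it is locally Lipschitz in $x$, giving local existence and uniqueness immediately; the uniform bound $|V|\leq M$ from (\ref{bdsV}) rules out finite-time blow-up and extends the solution to all of $(0,T)$. This simultaneously defines $X(x,s,t)$ and establishes its uniqueness.

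For part (2), the real workhorse is the flow identity $X(X(x,s,t),t,r)=X(x,s,r)$. This follows from the uniqueness in part (1): both sides, viewed as functions of $r$, satisfy the same ODE $\dot\phi=V(\phi,r)$ and agree at $r=t$. Specializing $(s,r)=(0,0)$ yields $X(X(x,0,t),t,0)=X(x,0,0)=x$, and $(s,r)=(t,t)$ yields $X(X(x,t,0),0,t)=X(x,t,t)=x$, so the maps $X(\cdot,0,t)$ and $X(\cdot,t,0)$ are mutual inverses.

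For part (3), I would apply the method of characteristics, reading the formula so that $u(x,t)$ is obtained by tracing the characteristic through $(x,t)$ back to time $0$ and evaluating $u_0$ there. The $C^1$ regularity of $u$ comes from classical smooth dependence of the flow on initial data, which holds as soon as $V\in C^1$. To verify the PDE, I fix $(x,t)$ and let $\xi(s)=X(x,t,s)$ be the characteristic through that point, so $\dot\xi(s)=V(\xi(s),s)$ and $\xi(t)=x$. By the flow identity from part (2), $X(\xi(s),s,0)=X(x,t,0)$ for every $s$, so $u(\xi(s),s)=u_0(X(x,t,0))$ is independent of $s$; differentiating at $s=t$ gives $u_t(x,t)+V(x,t)\cdot Du(x,t)=0$. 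The initial condition $u(\cdot,0)=u_0$ is read off from $X(x,0,0)=x$.

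The main point requiring care is the bookkeeping of time directions in the flow maps: since the transport $u_t+V\cdot Du=0$ carries data forward along characteristics, the solution formula evaluates $u_0$ at the preimage of $x$ under $X(\cdot,0,t)$, which by part (2) is $X(x,t,0)$. Beyond this conventional subtlety there is no substantive obstacle, as each piece is a textbook consequence of the $C^1$ hypothesis on $V$.
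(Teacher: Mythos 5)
Your proof is correct. The paper itself offers no argument for this proposition; it simply refers to Crippa's thesis for the standard Cauchy--Lipschitz and method-of-characteristics facts, so there is no in-paper proof to compare against. Each of your three steps -- Picard--Lindel\"of with the $L^\infty$ bound (\ref{bdsV}) for global-in-time existence, the cocycle identity $X(X(x,s,t),t,r)=X(x,s,r)$ via ODE uniqueness to get invertibility, and constancy of $u$ along $\xi(s)=X(x,t,s)$ via that same identity to verify the PDE -- is the standard route and is carried out correctly. One point worth flagging explicitly: as written, the paper's representation formula $u(x,t)=u_0(X(x,0,t))$ is a typo; tracing backward along characteristics of $u_t+V\cdot Du=0$ gives $u(x,t)=u_0(X(x,t,0))$, which is what you use and what is consistent with the paper's own formula $\Omega_t=\{x:X(x,t,0)\in\Omega_0\}$ in Theorem \ref{thm:discontinuous}. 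You spotted and resolved this ambiguity correctly, and your flow-identity computation $X(\xi(s),s,0)=X(x,t,0)$ gives a clean verification that $s\mapsto u(\xi(s),s)$ is constant, from which the PDE follows by differentiating at $s=t$.
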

We refer the reader to  \cite[Chapter 1]{Crippa} for the proof, as well as more exposition and further references.

We now establish some regularity results for $u$, which depend only on the constants in (\ref{hyp:logL}) and (\ref{bdsV}). For this we need two lemmas about the regularity of the flow $X$. We phrase them in terms of its trajectories, $t\mapsto X(t,s,x)$, which we denote by $\gamma(t)$.

\begin{lem}
\label{lem:Xep estimate space}
Let $V$ be continuous and satisfy (\ref{bdsV}) and (\ref{hyp:logL}). Let $0\leq t_0\leq T$, $x_1,x_2\in \rr^n$, and for $i=1,2$, let $\gamma_i$ solve the ODEs,
\[
\begin{cases}
\dot{\gamma}_i(t) = V(\gamma_i(t),t) \text{ on }(t_0,T),\\
\gamma_i(t_0)=x_i.
\end{cases}
\]
Then for $t\in [0,T]$,
\[
|\gamma_1(t)-\gamma_2(t)|\leq |x_1-x_0|^{\exp(-NT)}.
\]
\end{lem}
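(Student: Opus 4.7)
The plan is to reduce to a scalar differential inequality for $\phi(t):=|\gamma_1(t)-\gamma_2(t)|$ and then solve it via an Osgood-type computation tailored to the log-Lipschitz modulus $\sigma(r)=Nr|\ln r|$. First I would write, for $t\geq t_0$,
\[
\phi(t)\leq |x_1-x_2|+\int_{t_0}^t |V(\gamma_1(s),s)-V(\gamma_2(s),s)|\,ds\leq |x_1-x_2|+\int_{t_0}^t \sigma(\phi(s))\,ds,
\]
using the integral form of the ODE and the hypothesis \eqref{hyp:logL}. A standard Osgood comparison argument then gives $\phi(t)\leq y(t)$, where $y$ solves the scalar ODE $\dot y=Ny|\ln y|$ with $y(t_0)=|x_1-x_2|$.

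Assuming $|x_1-x_2|<1$ (which suffices for our application, since otherwise the claimed estimate is not useful), the trajectory of the comparison ODE stays in $(0,1)$, and the change of variables $\psi=-\ln y$ linearizes the equation to $\dot\psi=-N\psi$. Solving explicitly yields $\psi(t)=\psi(t_0)e^{-N(t-t_0)}$, which translates back to
\[
y(t)=|x_1-x_2|^{\exp(-N(t-t_0))}.
\]
Since $t-t_0\leq T$ and $|x_1-x_2|<1$, the exponent $\exp(-N(t-t_0))$ is at least $\exp(-NT)$, so $y(t)\leq |x_1-x_2|^{\exp(-NT)}$, giving the desired bound.

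I expect the main subtlety to be justifying the Osgood comparison rigorously at the endpoints. The modulus $\sigma(r)=Nr|\ln r|$ has infinite derivative at $r=0$, so one cannot invoke Gronwall's inequality directly. The standard workaround is to replace the initial gap by $|x_1-x_2|+\varepsilon$, run the above argument for the strictly positive comparison solution $y_\varepsilon$, and pass $\varepsilon\to 0$; in particular the case $x_1=x_2$ reduces to uniqueness $\phi\equiv 0$, which follows from the non-integrability $\int_0^1 dr/\sigma(r)=\infty$. A small additional point is that $\phi$ may not be differentiable, but it is absolutely continuous, and one argues via the integral inequality rather than via $\dot\phi$ directly. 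Once these technicalities are in place, the substitution $\psi=-\ln y$ gives the estimate essentially by inspection.
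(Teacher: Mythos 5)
Your proof is correct and follows the same route as the paper: derive the integral inequality $\phi(t)\leq |x_1-x_2|+\int_{t_0}^t\sigma(\phi(s))\,ds$ from the log-Lipschitz hypothesis, then solve the comparison ODE $\dot y=Ny|\ln y|$ explicitly via the substitution $\psi=-\ln y$. The paper compresses the second step into an appeal to ``a generalized Gronwall's inequality,'' whereas you supply the change of variables and the Osgood-type endpoint argument that the paper leaves implicit; both treatments also implicitly assume $|x_1-x_2|<1$ (and ignore that $\sigma$ fails to be monotone on $(1/e,1)$), which is harmless in the regime where the estimate is applied.
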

\begin{proof}
We have,
\[
\gamma_1(t)-\gamma_2(t) = x_1-x_0+\int_{t_0}^t V(\gamma_1(s),s)-V(\gamma_2(s),s)\, ds. 
\]
Taking absolute value and using (\ref{hyp:logL}) yields, for all $t$,
\begin{equation}
\label{r and R}
|\gamma_1(t)-\gamma_2(t)|\leq |x_1-x_0|+\int_{t_0}^t \sigma(|\gamma_1(s)-\gamma_2(s)|)\, ds.
\end{equation}
Let us use $R(t)$ to denote, 
\[
R(t)=|x_1-x_0|+\int_{t_0}^t \sigma(|\gamma_1(s)-\gamma_2(s)|)\, ds.
\]
Using a generalized Gronwall's inequality and the explicit expression $\sigma(r)=Nr|\ln(r)|$ we may compute,
\[
R(t)\leq |x_1-x_0|^{\exp(-N(t-t_0))}\leq |x_1-x_0|^{\exp(-NT)}.
\]
Together with (\ref{r and R}), this implies that the desired claim holds.
\end{proof}

\begin{lem}
\label{lem:Xep estimate time}
Let $V$ be continuous and satisfy (\ref{hyp:logL}) and (\ref{bdsV}). Let $x\in\rr^n$, $0\leq t_1,t_2\leq T$ and let $\gamma_i$ solve,
\[
\begin{cases}
\dot{\gamma}_i(t)=V(\gamma_i(t), t)\\
\gamma(t_i)=x.
\end{cases}
\]
Let $0\leq t\leq t_1$. Then
\[
|\gamma_1(t)-\gamma_2(t)|\leq (M|t_1-t_2|)^{\exp(-NT)}.
\]
\end{lem}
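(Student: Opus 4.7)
By symmetry, I will assume without loss of generality that $t_1\leq t_2$. The strategy is to reduce the time-shift estimate to the space-shift estimate of Lemma \ref{lem:Xep estimate space} by selecting a common reference time and using the uniform bound on $V$ to control the displacement acquired over the interval $[t_1,t_2]$.

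Concretely, the first step will be to evaluate $\gamma_1$ at the reference time $t_2$. Using the integral form of the ODE and the bound \eqref{bdsV},
\[
|\gamma_1(t_2)-x|=\left|\int_{t_1}^{t_2}V(\gamma_1(s),s)\,ds\right|\leq M|t_1-t_2|.
\]
Since $\gamma_2(t_2)=x$, this gives the ``initial data" estimate $|\gamma_1(t_2)-\gamma_2(t_2)|\leq M|t_1-t_2|$.

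Next, I will apply Lemma \ref{lem:Xep estimate space} with reference time $t_0=t_2$ and starting points $x_1=\gamma_1(t_2)$, $x_2=\gamma_2(t_2)=x$. By uniqueness of the flow for the fixed vector field $V$, the two resulting trajectories are precisely our $\gamma_1$ and $\gamma_2$. Applied at the time $t$ we are interested in, Lemma \ref{lem:Xep estimate space} yields
\[
|\gamma_1(t)-\gamma_2(t)|\leq |\gamma_1(t_2)-x|^{\exp(-NT)}\leq (M|t_1-t_2|)^{\exp(-NT)},
\]
which is exactly the claim.

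The only subtle point is that Lemma \ref{lem:Xep estimate space} was stated with the initial data prescribed at $t_0$ and the trajectories continued \emph{forward}, whereas here we have $t\leq t_1\leq t_2=t_0$, so we must propagate \emph{backwards}. This is harmless: reversing the direction of time $s\mapsto 2t_2-s$ converts the backward Gronwall problem into an equivalent forward one with the same log-Lipschitz modulus $\sigma$, so the generalized Gronwall computation used in the proof of Lemma \ref{lem:Xep estimate space} gives the same H\"older exponent $\exp(-NT)$ in either direction. I do not expect any genuine obstacle beyond recording this time-reversal remark.
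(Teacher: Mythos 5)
Your proof is correct, but it takes a genuinely different route than the paper's. The paper gives a direct Gronwall argument: writing
\[
\gamma_1(t)-\gamma_2(t)=\int_{t}^{t_1}\bigl(V(\gamma_2(s),s)-V(\gamma_1(s),s)\bigr)\,ds+\int_{t_1}^{t_2}V(\gamma_2(s),s)\,ds,
\]
so that the time-shift contributes an additive error $M|t_1-t_2|$ in the integral inequality, and then running the same log-Lipschitz Gronwall computation as in Lemma \ref{lem:Xep estimate space}. Your argument instead reduces the time-shift estimate to the space-shift estimate by synchronizing the trajectories at a common reference time and using $|V|\leq M$ to control the displacement accrued on $[t_1,t_2]$; this is cleaner in the sense that it reuses Lemma \ref{lem:Xep estimate space} as a black box rather than repeating its Gronwall step, at the price of needing the backward-in-time version of that lemma (which you correctly flag as a harmless time-reversal). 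Both approaches rely, explicitly or implicitly, on the uniqueness of the flow for a log-Lipschitz field, which is available here.

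One small inaccuracy: your reduction ``by symmetry, assume $t_1\leq t_2$'' is not actually a symmetry, because the hypothesis $t\leq t_1$ breaks the symmetry between the two indices. The correct formulation is to take $t^*:=\max(t_1,t_2)$ as the reference time. Then one of $\gamma_1(t^*),\gamma_2(t^*)$ equals $x$, the other differs from $x$ by at most $M|t_1-t_2|$, and $t\leq t_1\leq t^*$ guarantees you are propagating backward from $t^*$ in all cases, so the space lemma applies exactly as you intend. With that adjustment, the argument is complete.
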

\begin{proof}
Using the definition of the $\gamma_i$ and doing simple manipulations yields,
\begin{align*}
\gamma_1(t)-\gamma_2(t)&=\int_{t_1}^t V(\gamma_1(s),s)\, ds - \int_{t_2}^t V(\gamma_1(s),s)\, ds\\
&= \int^{t_1}_t V(\gamma_2(s),s)-V(\gamma_1(s),s)\, ds +\int^{t_2}_{t_1} V(\gamma_2(s),s)\, ds.
\end{align*}
Taking absolute value and using (\ref{bdsV}) and (\ref{hyp:logL}) yields,
\[
|\gamma_1(t)-\gamma_2(t)|\leq \int^{t_1}_t \sigma(|\gamma_1(s)-\gamma_2(s)|)\, ds +M|t_1-t_2|.
\]
We conclude just as in the proof of the previous lemma.
\end{proof}

We write the bounds in the previous two lemmas so explicitly to highlight the fact that they depend only on the constants $M$, $N$, and not on the $C^1$ norm of $V$.

\begin{prop}
\label{prop:regularity of u}
Assume the hypotheses of Proposition \ref{ODE existence} and let $u$ be as given there. Assume that there exists a modulus of continuity $\omega$ with,
\[
|u_0(x)-u_0(y)|\leq \omega(|x-y|).
\] We have $||u||_{L^\infty}\leq ||u_0||_{L^\infty}$ and $u$ is uniformly continuous, with modulus that depends only on $\omega$, $T$, $N$ and $M$.
\end{prop}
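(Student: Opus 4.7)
The plan is to use the explicit formula $u(x,t) = u_0(X(x,0,t))$ provided by Proposition \ref{ODE existence}, and then transfer the regularity estimates for the flow $X$ established in Lemmas \ref{lem:Xep estimate space} and \ref{lem:Xep estimate time} directly to $u$ via the modulus of continuity $\omega$ of $u_0$.

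First, the $L^\infty$ bound is immediate: since $u(x,t) = u_0(X(x,0,t))$ and $X(x,0,t) \in \rr^n$, we have $|u(x,t)| \leq \|u_0\|_{L^\infty}$ pointwise. Second, for spatial continuity at a fixed time $t$, I would write
\[
|u(x,t) - u(y,t)| \leq \omega(|X(x,0,t) - X(y,0,t)|),
\]
and apply Lemma \ref{lem:Xep estimate space} (with $t_0 = 0$ and initial points $x,y$) to bound $|X(x,0,t)-X(y,0,t)| \leq |x-y|^{\exp(-NT)}$. Since $\omega$ is a modulus of continuity and $r \mapsto r^{\exp(-NT)}$ is continuous with value $0$ at $0$, the composition $\omega(|x-y|^{\exp(-NT)})$ is a modulus of continuity in $|x-y|$ depending only on $\omega$, $T$, and $N$.

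For temporal continuity at a fixed $x$, I would bound
\[
|X(x,0,t_1) - X(x,0,t_2)| = \Bigl|\int_{t_1}^{t_2} V(X(x,0,r), r)\, dr\Bigr| \leq M|t_1-t_2|,
\]
using the defining integral equation for $X$ and the uniform bound (\ref{bdsV}) on $V$. (Note that this case is simpler than Lemma \ref{lem:Xep estimate time}, which handles flows with distinct initial times; here the two trajectories coincide up to time $\min(t_1,t_2)$.) Thus $|u(x,t_1) - u(x,t_2)| \leq \omega(M|t_1-t_2|)$, again a modulus depending only on $\omega$ and $M$.

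Combining via the triangle inequality,
\[
|u(x,t) - u(y,s)| \leq \omega(|x-y|^{\exp(-NT)}) + \omega(M|t-s|),
\]
which furnishes the desired joint modulus of continuity depending only on $\omega$, $T$, $N$, and $M$. There is no substantial obstacle here — the work has already been done in Lemma \ref{lem:Xep estimate space}; the only thing to notice is that the time‐continuity estimate does not require the full strength of Lemma \ref{lem:Xep estimate time} because we fix $x$ and compare only endpoints of a single trajectory.
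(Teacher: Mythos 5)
The core of your argument rests on the representation formula $u(x,t)=u_0(X(x,0,t))$, taken verbatim from the statement of Proposition \ref{ODE existence}. That formula is a misprint in the paper: the solution of the transport equation $u_t + V\cdot Du = 0$ is obtained by tracing characteristics \emph{backward} to time $0$, so the correct representation is $u(x,t) = u_0(X(x,t,0))$ (this is the version used elsewhere in the paper, e.g.\ in Theorem \ref{thm:discontinuous} and implicitly in the proof of the very proposition you are proving). A quick sanity check with $V\equiv 1$ in one dimension shows the discrepancy: the solution is $u(x,t)=u_0(x-t)=u_0(X(x,t,0))$, not $u_0(x+t)=u_0(X(x,0,t))$; more generally one can verify directly that $u_0(X(x,0,t))$ does not satisfy $u_t + V\cdot Du =0$.

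With the correct formula, the temporal continuity estimate requires bounding $|X(x,t_1,0)-X(x,t_2,0)|$, which compares the time-$0$ endpoints of two \emph{distinct} trajectories: one passing through $x$ at time $t_1$, the other through $x$ at time $t_2$. These trajectories do not coincide up to time $\min(t_1,t_2)$ — for instance, at time $t_1$ the first trajectory is at $x$ while the second is at $X(x,t_2,t_1)\neq x$ in general. This is precisely the configuration that Lemma \ref{lem:Xep estimate time} is designed to treat, and the bound one obtains is the H\"older estimate $|X(x,t_1,0)-X(x,t_2,0)|\leq (M|t_1-t_2|)^{\exp(-NT)}$, not the Lipschitz bound you derived. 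Your remark that Lemma \ref{lem:Xep estimate time} is "not needed" and that you get a stronger (Lipschitz) modulus in time should itself have raised a flag: the paper proves that lemma precisely because the weaker H\"older estimate is the best one can extract from a log-Lipschitz velocity field. The $L^\infty$ bound and the spatial continuity argument in your proposal are fine; only the temporal estimate — and the identification of which flow map enters the representation — need to be corrected, after which your proof aligns with the paper's.
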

\begin{proof}
We have, by triangle inequality, the definition of $u$, and the fact that $u_0$ is uniformly continuous,
\begin{align*}
|u(x_1,t_1)-u(x_2,t_2)|&\leq |u(x_1,t_1)-u(x_1,t_2)|+|u(x_1,t_2)-u(x_2,t_2)|\\
&= |u_0(X(0,t_1,x_1))-u_0(X(0,t_2,x_1))|+|u_0(X(0,t_2,x_1))-u_0(X(0,t_2,x_2))|\\
&\leq \omega(|X(0,t_1,x_1)-X(0,t_2,x_1)|)+\omega(|X(0,t_2,x_1)-X(0,t_2,x_2)|).
\end{align*}
Applying Lemma \ref{lem:Xep estimate time} and Lemma \ref{lem:Xep estimate space} to the two terms on the right-hand side of the previous line implies that there exists a modulus 
 $\tilde{\omega}$ that depends only on $\omega$, $T$, $N$ and $M$ such that,
 \[
 |u(x_1,t_1)-u(x_2,t_2)|\leq \tilde{\omega}(|(x_1,t_1)-(x_2,t_2)|).
 \]
\end{proof}

We now present the proofs of Theorem \ref{thm:existence} and Theorem \ref{thm:u and ODE}. 
\begin{proof}[Proof of Theorem \ref{thm:existence} and Theorem \ref{thm:u and ODE}]
Let $\rho^\ep(x,t)$ and $\tilde{\rho}^\ep(x)$ be standard mollifiers with integral 1. We regularize $V$ by convolution in both $x$ and $t$:
\[
V^\ep(x,t)=\int V(x-y,t-s)\rho^\ep(y,s)\, dy\, ds.
\]
Since $V$ satisfies (\ref{bdsV}) and (\ref{hyp:logL}),  $V^\ep$ does as well. We also have $V^\ep\in C^1$, and hence $V^\ep$ satisfies the hypotheses of Proposition \ref{ODE existence}.  Let $X^\ep$ be the flow given by Proposition \ref{ODE existence}, so that $X^\ep$ satisfies,
\begin{equation}
\label{Xep}
X^\ep(x,s,t)=x+\int_s^tV^\ep(X^\ep(x,s,r),r)\, dr.
\end{equation}
We also regularize the initial data: we define,
\[
u_0^\ep(x)=\int_{\rr^n} u_0(x-y)\tilde{\rho}^\ep(y)\, dy,
\]
so that $u_0^\ep$ is $C^1$ and uniformly continuous with modulus independent of $\ep$. 

Let $u^\ep(x,t)=u_0^\ep(X^\ep(x,0,t))$, so that according to Proposition \ref{ODE existence}, $u^\ep$ satisfies, 
\[
\begin{cases}
\partial_t u^\ep +V^\ep Du^\ep =0 \text{ in }\rr^n\times (0,T),\\
u^\ep(x,0)=u_0^\ep(x).
\end{cases}
\]
According to Proposition \ref{prop:regularity of u}, the $u^\ep$ are uniformly continuous in $x$ and $t$ with modulus independent of $\ep$ and  $||u_\ep ||_{L^\infty}\leq ||u_0||_{L^\infty}<\infty$. By the Arzela-Ascoli Theorem there exists a subsequence which converges locally uniformly to some $u$. 

Let $K$ be a compact subset of $\rr^n\times (0,T)$. Item (\ref{item L1CK}) of Lemma \ref{lem:Hep}, together with an estimate similar to (\ref{Hep H}), imply that the $V^\ep$ converge to $V$ in $L^1((0,T), C(K))$ (indeed, Lemma \ref{lem:Hep} concerns regularizing via convolution in time, while (\ref{Hep H}) is an estimate regarding convolution in space). Thus we may apply Proposition \ref{prop:stability} and conclude that $u$ is a viscosity solution of (\ref{eq:main}). The stated regularity of $u$ is a consequence of the regularity of the $u^\ep$ given in Proposition \ref{prop:regularity of u}. 

We have thus completed the proof of Theorem \ref{thm:existence}, and continue with this setup to establish Theorem \ref{thm:u and ODE}. 

 According to Lemma \ref{lem:Xep estimate space} and Lemma \ref{lem:Xep estimate time}, for $s,t\in (0,T)$, the map,
\[
(x,s, t)\mapsto X^\ep(x,s,t)
\]
is Holder continuous, uniformly in $\ep$. The $L^\infty$ bound (\ref{bdsV})  on $V$ implies $|X^\ep(x,s,t)|\leq |x|+TM$ for $t\in (0,T)$. Thus the Arzela-Ascoli theorem implies that there exists a subsequence of $\ep_j\rightarrow 0$ as $j\rightarrow \infty$ and an $X(x,s,t)$ such that $X^{\ep_j}$ converge to $X$ locally uniformly on $(0,T)\times (0,T)\times \rr^n$. In addition, $X$ is therefore also Holder continuous.

Since $V^{\ep_j}$ converge to $V$ in $L^1((0,T), C(K))$ for any compact set $K$ and  satisfy (\ref{hyp:logL}) uniformly in $\ep_j$, we find that for any $s,t\in (0,T)$,
\[
\lim_{j\rightarrow\infty } \int_s^tV^{\ep_j}(X^{\ep_j}(x,s,r), r)\, dr = \int_s^tV(X(x,s,r), r)\, dr.
\]
Thus taking a (pointwise) limit of (\ref{Xep}) along the subsequence $\ep_j$ implies that $X$ satisfies (\ref{X and V}). 
 
Since $X^{\ep_j}$ converge to $X$ locally uniformly, we find $u_0^\ep(X^\ep(x,0,t))$ converges to $u_0(X(x,0,t))$. Thus,
\[
u(x,t)=\lim_{j\rightarrow \infty} u_0^{\ep_j}(X^{\ep_j}(x,0,t))=u_0(X(x,0,t)),
\]
as desired. 

The regularity of the map $\Phi$ follows from the regularity of $X$.

\end{proof}

\subsection{Proof of Theorem \ref{thm:discontinuous}}
We now have the ingredients needed to establish Theorem \ref{thm:discontinuous}. 
\begin{proof}
Let us first approximate the initial data $u_0$ from below by $u_0^{-,k}$ and from above $u_0^{+,k}$, where $u_0^{\pm,k}$ are  continuous functions from $\rr^n$ to $[0,1]$. We construct these functions such that $u_0^{\pm,k} = 1$ in $\Omega_0^{-,k}:=\{x: d(x, \Omega_0^C)\geq 1/k\}$ and $u_0^{\pm,k} = 0$ outside of $\Omega_0^{+,k}:= \{x: d(x, \Omega_0) \leq 1/k\}$. 

Next, let $u^{-,k}$ and $u^{+,k}$ be the corresponding unique viscosity solutions of (\ref{eq:main}) with aforementioned initial data $u_0^{-,k}$ and $u_0^{+,k}$. Due to Theorem 2.7 we have representation formulas for these solutions. In particular we have 
$$
u^{\pm,k} = 1 \hbox{ in } \Omega_t^{-,k}:= \{x: X(x,t,0)\in \Omega_0^{-,k}\}
$$
and
$$
u^{\pm, k} = 0 \hbox{ in } \Omega_t^{+,k}:= \{x: X(x,t,0)\in \Omega_0^{+,k}\}.
$$
The properties of the flow map $X$ summarized in Theorem \ref{thm:u and ODE}, including invertibility and continuity with respect to $x$ and $t$, imply,
\begin{equation}
\label{limsup}
\limsup_{k\to\infty}{ }^* u^{-,k} (\cdot,t)=\limsup_{k\to\infty}{ }^* u^{+,k} (\cdot,t)= \chi_{\bar{\Omega_t}} 
\end{equation}
and 
\begin{equation}
\label{liminf}
\liminf_{k\to\infty}{ }^* u^{-,k}(\cdot,t)=\liminf_{k\to\infty}{ }^* u^{+,k}(\cdot,t) = \chi_{\Omega_t}.
\end{equation}
 Due to the standard stability property of viscosity solutions, $\chi_{\bar{\Omega_t}} =u^*$ is a subsolution of \eqref{transport}, and $u^* =(u_0)^*$ due to the continuity of $X(x,t,0)$ with respect to $t$. Similarly we have  $u$ is a supersolution of \eqref{transport} with $u_* = (u_0)_*$. So we have at least one solution of \eqref{transport} with initial data $u_0$.

Now if there is any other solution $w$ of \eqref{transport} with initial data $u_0$, then by comparison principle $w^* \leq u^{+,k}$ and $w_* \geq u^{-,k}$.  Taking $\limsup$ and $\liminf$, repectively, of these inequalities, and then using (\ref{limsup}) and (\ref{liminf}) yields, 
\[
w^*\leq  \chi_{\bar{\Omega_t}}=u^*,
\]
and 
\[
w_*\geq \chi_{\Omega_t}=u.
\]
Putting the previous two lines together yields, 
\[
u\leq w_*\leq w\leq  w^*\leq u^*,
\]
 as desired.
\end{proof}

\section{}
\label{third appendix}

Throughout this section we use $H$ to denote, $H(x,s,p)=V(x,s)\cdot p$. 
First we provide:
\begin{proof}[Proof of Lemma \ref{lem:psiepsubsoln}]
We establish that $\psi_\ep$ is a subsolution. The proof that $\bar{\psi}_\ep$ is a supersolution is analogous.

Let us use $C_1$ to denote, $C_1=||\phi||_{C^3(\rr^n)}$. 
Let $\sigma$ be as in (\ref{hyp:logL}) and define $\tilde{\sigma}$ by,
\[
\tilde{\sigma}(\ep)=\int \rho(z)\sigma(\ep z)\, dz.
\]
We remark that $\tilde{\sigma}$ is a continuous decreasing function with $\lim_{\ep\rightarrow 0}\tilde{\sigma}(\ep)=0$. 
We take $\ep_1$ be such that 
\[
C_1\tilde{\sigma}(\ep_1)\leq \frac{\alpha}{4}.
\]
We have,
\begin{align*}
|H^\ep(x,t,p)-H(x,t,p)| &= |\int \rho_\ep(y) H(x-y,t,p)-H(x,t,p)\, dy| \\
&\leq \int \rho_\ep(y) |H(x-y,t,p)-H(x,t,p)|\, dy\\
&\leq \int \rho_\ep(y) |p|\sigma(y)\, dy =|p|\int \frac{1}{\ep^n}\rho\left(\frac{y}{\ep}\right)\sigma(y)\, dy = |p|\int \rho(z)\sigma(\ep z)\, dz,
\end{align*}
so that the definition of $\tilde{\sigma}$ implies,
\begin{equation}
\label{Hep H}
|H^\ep(x,t,p)-H(x,t,p)|\leq |p|\tilde{\sigma}(\ep).
\end{equation}
Let us now take $\ep=\ep_1$. The previous estimate, together with our choice of $\ep_1$, imply that for any $x\in \rr^n$,
\begin{equation}
\label{Hep1}
|H^{\ep_1}(x,t,D\phi(x))-H(x,t,D\phi(x))|\leq |D\phi(x)|\tilde{\sigma}(\ep_1)\leq \frac{\alpha}{4},
\end{equation}
which is exactly (\ref{H^ep minusH}). 
In addition, we have,
\begin{align*}
D_xH^\ep(x,t,p)&= D_x\int_y H(y,t,p)\frac{1}{\ep^n}\rho\left(\frac{x-y}{\ep}\right)\, dy=\int_y H(y,t,p)\frac{1}{\ep^{n+1}}D\rho\left(\frac{x-y}{\ep}\right)\, dy \leq \\
&\leq M |p| \int_y \frac{1}{\ep^{n+1}}D\rho\left(\frac{x-y}{\ep}\right)\, dy \leq \frac{M|p|}{\ep}C. 
\end{align*}

Now that we've fixed $\ep$, we take $\bar{h}$ to be,
\begin{equation}
\label{barh}
\bar{h} = \frac{\alpha}{16M(M C_1 + M C C_1 \ep^{-1})},
\end{equation}
where $C$ is as in the previous line. To establish that $\psi_\ep$ is a subsolution of (\ref{eq:main}) on $\rr^n\times (t,t+\bar{h})$, we suppose $\gamma\in C^1(\rr^n\times [0,T])$, $(x_0,r_0)\in \rr^n\times (t,t+\bar{h})$, $G,b\in H^-( x_0,r_0, D\gamma(x_0,r_0))$, and 
\begin{equation}
\label{fn psi gamma}
(x,r)\mapsto \psi_\ep(x,r) + \int_0^r b(s)\, ds -\gamma(x,r)
\end{equation}
has a local maximum at $(x_0,r_0)$. We want to show,
\[
\gamma_r(x_0,r_0)+G(x_0,r_0, D\gamma(x_0,r_0))\leq 0.
\]
We proceed by contradiction and suppose that the previous line does not hold. Thus there exists some $a>0$ such that,
\[
2a\leq \gamma_r(x_0,r_0)+G(x_0,r_0, D\gamma(x_0,r_0)).
\]
The continuity of $\gamma$, its derivatives, and $G$ implies,
\[
a\leq \gamma_r(x_0,r)+G(x_0,r, D\gamma(x_0,r)),
\]
for $r$ near $r_0$, for instance on $(r_1,r_0)$ for some $r_1<r_0$. Integrating the previous line thus yields,
\[
a(r_0-r_1)\leq \gamma(x_0,r_0)-\gamma(x_0,r_1)+\int_{r_1}^{r_0}G(x_0,r, D\gamma(x_0,r))\, dr.
\]
Since $(x_0,r_0)$ is a local maximum of the function given in (\ref{fn psi gamma}), we find,
\[
\gamma(x_0,r_0)-\gamma(x_0,r_1)\leq \psi_\ep(x_0,r_0) -\psi_\ep(x_0,r_1)+\int_{r_1}^{r_0}b(s)\, ds.
\]
The two previous lines thus imply,
\[
a(r_0-r_1)\leq \psi_\ep(x_0,r_0)-\psi_\ep(x_0,r_1)+\int_{r_1}^{r_0}b(r)\, dr  +\int_{r_1}^{r_0}G(x_0,r, D\gamma(x_0,r))\, dr.
\]
The definition of $\psi_\ep$ yields,
\[
a(r_0-r_1)\leq -\int_{r_1}^{r_0}H^\ep(x_0,r,D\phi(x_0))\, dr - (r_0-r_1)\alpha/2+\int_{r_1}^{r_0}b(r)\, dr  +\int_{r_1}^{r_0}G(x_0,r, D\gamma(x_0,r))\, dr.
\]
Next, we use the estimate (\ref{Hep1}) to bound the first term on the right-hand side of the previous line from above, and find,
\begin{equation}
\label{Hpsi}
a(r_0-r_1)\leq -\int_{r_1}^{r_0}H(x_0,r,D\phi(x_0))\, dr - (r_0-r_1)\alpha/4+\int_{r_1}^{r_0}b(r)\, dr  +\int_{r_1}^{r_0}G(x_0,r, D\gamma(x_0,r))\, dr.
\end{equation}

We now want to estimate the difference between $D\phi$ and $D\gamma$ in order to compare the first and last terms on the right-hand side of the previous line. To this end, we first note that, since  $(x_0,r_0)$ is a local maximum of the differentiable function (\ref{fn psi gamma}), 
\begin{align*}
D_x\gamma(x_0,r_0) &= D_x\psi_\ep(x_0,r_0) \\
&= D\phi(x_0) - \int_t^{r_0} \partial_p H^\ep( x_0, s,D\phi(x_0))D^2\phi(x_0) + \partial_x H^\ep( x_0, s,D\phi(x_0))\, ds.
\end{align*}
Rearranging the previous line and taking absolute value yields,
\begin{align*}
|D_x\gamma(x_0,r_0)- D\phi(x_0)|&\leq \int_t^{r_0} |\partial_p H^\ep( x_0, s,D\phi(x_0))D^2\phi(x_0) + \partial_x H^\ep( x_0, s,D\phi(x_0))|\, ds.
\end{align*}
The definition of $H$ (namely $H(x,t,p)=V(x,t)\cdot p$), together with the assumed bound (\ref{bdsV}) on $V$ yields  
\[
|\partial_p H^\ep(x_0, s, D\phi(x_0))|= |V(x_0,s)|\leq M.
\] 
Together with our estimate $|D_x H^\ep|\leq MpC\ep^{-1}$, as well as the estimate $||\phi||_{C^3(\rr^n)}\leq C_1$,  this yields,
\begin{align*}
|D_x\gamma(x_0,r_0)- D\phi(x_0)|&\leq \int_t^{r_0} M C_1 + M C C_1 \ep^{-1} \, ds= (r_0 - t)(M C_1 + M C C_1 \ep^{-1}).
\end{align*}
Since $r_0\leq t+\bar{h}$ we find,
\[
|D_x\gamma(x_0,r_0)- D\phi(x_0)|\leq \bar{h}(M C_1 + M C C_1 \ep^{-1}).
\]
Since $\gamma$ is continuous in $r$, we have that for $r$ near $r_0$, 
\[
|D_x\gamma(x_0,r)- D\phi(x_0)|\leq 2\bar{h}(M C_1 + M C C_1 \ep^{-1}).
\]
By our choice of $\bar{h}$ in (\ref{barh}), and since  $h\leq \bar{h}$, we have,
\[
|D_x\gamma(x_0,r_0)- D\phi(x_0)|\leq \frac{\alpha}{8M},
\]
so that,
\begin{equation*}
|H ( x,r, D_x\gamma(x_0,r))- H( x,r,D\phi(x_0))|\leq M|D_x\gamma(x_0,r)- D\phi(x_0)|\leq \frac{\alpha}{8}.
\end{equation*}

We are now ready to complete the proof of the lemma. We may assume that $r_1$ is close enough to $r_0$ so that the previous line holds. Thus, the previous line together with (\ref{Hpsi}) imply,
\begin{equation*}
a(r_0-r_1)\leq -\int_{r_1}^{r_0}H(x_0,r,D\gamma(x_0,r))\, dr - (r_0-r_1)\alpha/8+\int_{r_1}^{r_0}b(r)\, dr  +\int_{r_1}^{r_0}G(x_0,r, D\gamma(x_0,r))\, dr.
\end{equation*}
The fact that we took $G,b\in H^-( x_0,r_0, D\gamma(x_0,r_0))$ implies that, for $r_1$ close enough to $r_0$, the sum of the first, third, and fourth terms on the right-hand side of the previous line is non-positive. Thus we find,
\[
a(r_0-r_1)\leq- (r_0-r_1)\alpha/8,
\]
which yields the desired contradiction (since $r_0>r_1$) and completes the proof of the lemma.

\end{proof}

We provide the proof of item (\ref{item:superflow supersoln}); the proof of the other item is analogous. 
Throughout the remainder of this appendix we use $W$ to denote,
\[
W(x,t)=(\chi_{\Omega}(x,t)-\chi_{\bar{\Omega}^c}(x,t))_*.
\]
We split the proof into two parts, and begin by establishing that if $W$ is a supersolution then $\Omega_t$ is a superflow.
\begin{proof}[Proof of Theorem \ref{thm:soln implies flow} item (\ref{item:superflow supersoln})]
Let us suppose $W$ is a supersolution of (\ref{eq:main}) on $\rr^n\times [0,T]$. We aim to show that $(\Omega_t^{int})_{t\in [0,T]}$ is a generalized superflow with velocity $-H$. To this end, let us take $\bar{x}\in \rr^n$, $t\in (0,T)$, $r>0$, $\alpha>0$ and a smooth function $\phi:\rr^n\rightarrow \rr$ such that 
\begin{equation}
\label{phi below omega}
\{x: \phi(x)\geq 0\}\subset \Omega_t^{int}\cap B_r(\bar{x}),
\end{equation}
with $|D\phi|\neq 0$ on $\{x: \phi(x)=0\}$. By modifying  $\phi$ outside $B_{2r(x_0)}$, we may assume, without loss of generality, $||\phi||_{C^3(\rr^n)}<\infty$ and $||\phi||_{C^3(\rr^n)}$ depends on $r$ and $||\phi||_{C^3(B_{r(x_0)})}$. Let us use $C_1$ to denote,
\[
C_1=||\phi||_{C^3(\rr^n)}.
\]

Let  $\psi_\ep$ be as defined in Lemma \ref{lem:psiepsubsoln}, and fix $\ep=\ep_1$. 
 Notice that $\psi_\ep(x,t)=\phi(x)$, so according to (\ref{phi below omega}) we have $\psi(x,t)\leq W(x,t)$ for all $x\in \rr^n$. Since, according to Lemma \ref{lem:psiepsubsoln}, $\psi_\ep$ is a subsolution of (\ref{eq:main}), we can apply the comparison theorem and conclude that $\psi_\ep(x,r)\leq W(x,r)$ holds on $\rr^n\times (t,t+\bar{h})$. Due to the definition of $W$ this implies, for $h\in (0,\bar{h})$,
\begin{equation}
\label{psi pos in Omega}
\{x: \psi_\ep(x,t+h)> 0 \} \subset \Omega_{t+h}^{int}.
\end{equation}

Let us take $h\in (0,\bar{h})$ and $x\in \bar{B}_r(\bar{x})$ such that 
\[
\phi(x)-\int_t^{t+h} H( x,s, D\phi(x))\, ds -h\alpha> 0.
\]
Rearranging and then using the estimate (\ref{H^ep minusH}) yields,
\begin{align*}
\phi(x)&> \int_t^{t+h} H( x,s, D\phi(x))\, ds +h\alpha \geq \int_t^{t+h} H^\ep(x,s,D\phi(x)) - \frac{\alpha}{4}\, ds +h\alpha\\
&= \int_t^{t+h} H^\ep(x,s,D\phi(x))\, ds  +h\alpha/2= \phi(x)-\psi_\ep(x,t+h)
\end{align*}
where last equality follows from  the definition of $\psi$. So, we find $\psi_\ep(x,t+h)>0$ holds for such $x$, $h$. 

Thus, we have shown that for $h\in (0,\bar{h})$, 
\[
 \bar{B}_r(\bar{x})\cap \left\{x: \phi(x)-\int_t^{t+h} H( x,s, D\phi(x))\, ds -h\alpha> 0\right\} \subset \{x: \psi(x,t+h) > 0\}
\]
According to (\ref{psi pos in Omega}), the set on the right-hand side of the previous line is contained in $\Omega_{t+h}$. Thus we've shown,
\[
\bar{B}_r(\bar{x})\cap \left\{x: \phi(x)-\int_t^{t+h} H( x,s, D\phi(x))\, ds -h\alpha> 0\right\}\subset \Omega_{t+h}^{int},
\]
as desired. 
\end{proof}

For the other direction of item (\ref{item:superflow supersoln}), we will employ the following lemma. 
It is analogous to Lemma 2.1 of \cite{BS}. That lemma does not take time into account, but  the proof is almost identical and we omit it.
\begin{lem}
\label{analogylem21BS}
Let $\psi(x,t)$ be bounded on $\rr^n\times [0,T]$, $C^2$ in $x$ and continuous in time. Assume $(x_0, t_0)$ is such that $\psi(x_0, t_0)=0$ and $D\psi(x_0,t_0)\neq 0$. Define,
\[
\phi_k(x,t)=\psi(x,t)-k|x-x_0|^2.
\]
There exists $K>0$ and $\bar{h}>0$ such that, for all $h\in (0,\bar{h})$,
\begin{equation}
\label{pos set phi}
\{x| \phi_K(x,t_0-h)\geq 0\} \subset  B_r(x_0),
\end{equation}
and
\begin{equation}
\label{Dphi}
|D\phi_K(x,t_0-h)|\neq 0 \text{ on }\{x: \phi_K(x,t_0-h)=0\}.
\end{equation}
\end{lem}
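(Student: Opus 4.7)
The plan is to prove \eqref{pos set phi} and \eqref{Dphi} separately, treating $r > 0$ as a given parameter. The containment \eqref{pos set phi} is elementary: if $R := \|\psi\|_{L^\infty(\rr^n \times [0,T])}$, then for every $t$ and every $|x - x_0| \geq r$ one has $\phi_K(x,t) \leq R - Kr^2$, which is negative for any $K > R/r^2$. So for such $K$ the positivity set of $\phi_K(\cdot,t)$ is contained in $B_r(x_0)$ for every $t$, and in particular at $t = t_0 - h$.

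For the gradient nondegeneracy \eqref{Dphi}, the key is a second-order Taylor expansion around $(x_0, t_0)$. Joint continuity of $D\psi$ (built into the natural reading $\psi \in C([0,T]; C^2(\rr^n))$) lets me pick $\rho_0 > 0$ and $m := \tfrac{1}{2}|D\psi(x_0,t_0)| > 0$ with $|D\psi(x,t)| \geq m$ on $\bar B_{\rho_0}(x_0) \times [t_0 - \rho_0, t_0]$; let $L_2$ bound $|D^2\psi|$ on $\bar B_r(x_0) \times [0,T]$. On the zero set $\psi(x, t_0 - h) = K|x - x_0|^2$, expanding $\psi$ to second order around $x_0$ and substituting into $D\phi_K(x, t_0 - h) \cdot (x - x_0)$ gives
\[
D\phi_K(x, t_0-h) \cdot (x - x_0) = -K|x - x_0|^2 - \psi(x_0, t_0-h) + O(L_2 |x - x_0|^2),
\]
so Cauchy--Schwarz yields $|D\phi_K| \cdot |x - x_0| \geq (K - cL_2)|x - x_0|^2 - |\psi(x_0, t_0-h)|$ for an absolute constant $c$.

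I would then split into cases on the size of $|x - x_0|$. In the first case $|x - x_0|^2 \geq 2|\psi(x_0, t_0-h)|/(K - cL_2)$ the display above already forces $|D\phi_K| > 0$, with the trivial case $x = x_0$ handled by $D\phi_K(x_0, t_0 - h) = D\psi(x_0, t_0 - h) \neq 0$ for $h$ small. In the second case $|x - x_0|^2 < 2|\psi(x_0, t_0-h)|/(K - cL_2)$: for $h$ small, $\psi(x_0, t_0 - h) \to 0$ forces $|x - x_0|$ to be so small that $|x - x_0| \leq \rho_0$ and $2K|x - x_0| \leq m/2$, whereupon $|D\phi_K| \geq |D\psi| - 2K|x - x_0| \geq m/2 > 0$ finishes the argument. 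Both $K$ and $\bar h$ are then chosen depending only on $R$, $r$, $L_2$, $m$, and the continuity modulus of $t \mapsto \psi(x_0, t)$ at $t_0$.

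The main obstacle is the second case: the quadratic lower bound is useless exactly when $|x - x_0|$ is of order $\sqrt{|\psi(x_0, t_0-h)|/K}$, so one cannot just enlarge $K$ without coordinating it with how fast $\bar h$ shrinks. Coupling these two scales -- picking $\bar h$ small enough that the forced smallness of $|x - x_0|$ puts us inside the region where $|D\psi| \geq m$ and $2K|x - x_0| < m$ -- is the delicate point. A minor side issue is to interpret ``$C^2$ in $x$ and continuous in $t$'' so that $D\psi$ is jointly continuous and $D^2\psi$ is locally bounded; the natural reading $\psi \in C([0,T]; C^2_{\mathrm{loc}}(\rr^n))$ suffices.
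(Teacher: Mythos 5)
Your proof is correct. The paper itself omits the proof of this lemma, stating only that it is ``almost identical'' to Lemma 2.1 of Barles--Souganidis \cite{BS}, so there is no authorial argument to compare against line by line; but your argument is precisely the expected time-dependent adaptation. Both parts check out: the containment \eqref{pos set phi} by the boundedness of $\psi$, and the nondegeneracy \eqref{Dphi} by testing $D\phi_K$ against $x-x_0$ together with a second-order Taylor expansion, which yields $D\phi_K(x,t_0-h)\cdot(x-x_0) = -K|x-x_0|^2 - \psi(x_0,t_0-h) + O(L_2|x-x_0|^2)$ on the zero set. You correctly identify the one genuinely new subtlety over the time-independent version of \cite{BS} -- namely that $\psi(x_0,t_0-h)$ need not vanish -- and handle it by the case split: either $|x-x_0|^2$ dominates $|\psi(x_0,t_0-h)|/(K-cL_2)$, in which case the projection estimate gives strict positivity (with $x=x_0$ treated separately via $D\phi_K(x_0,\cdot)=D\psi(x_0,\cdot)$), or else $|x-x_0|$ is forced to be small, where the nondegeneracy of $D\psi$ near $(x_0,t_0)$ and the smallness of $2K|x-x_0|$ finish the job. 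The coupling of $K$ (chosen first, from $R$, $r$, $L_2$) and $\bar h$ (chosen after, using continuity of $t\mapsto\psi(x_0,t)$ and $\rho_0,m$) is exactly the right order of quantifiers; the constants are independent of $h$ as required. Your remark on interpreting ``$C^2$ in $x$ and continuous in $t$'' as $\psi\in C([0,T];C^2_{\mathrm{loc}})$ is also the sensible reading needed to get $L_2$ and the joint continuity of $D\psi$.
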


We proceed with:
\begin{proof}[Proof of Theorem \ref{thm:soln implies flow}]
\textbf{Step 1.} Let $(\Omega_t^{int})_{t\in [0,T]}$ be a generalized superflow with velocity $V$. We aim to show that $W$ is a supersolution of (\ref{eq:main}).

To this end, let us suppose $\phi\in C^1(Q)$, $(x_0,t_0)\in Q$, and $(G,b)\in H^+(x_0,t_0,D\phi(x_0,t_0))$ are such that 
\[
(x,t)\mapsto W(x,t)+\int_0^tb(s)\, ds-\phi(x,t)
\]
has a local minimum at $(x_0,t_0)$. 
We may assume that the minimum is strict on $\{(x,t): |x-x_0|+|t-t_0|<2r\}$ for some $r$.  
In addition, by replacing $\phi(x,t)-\int_0^tb(s)\, ds$ with $\phi(x,t)-\int_0^tb(s)\, ds -\phi(x_0,t_0)+\int_0^{t_0}b(s)\, ds $, we may assume 
\begin{equation}
\label{phibx0t0}
\phi(x_0,t_0)-\int_0^{t_0}b(s)\, ds=0.
\end{equation}
 Finally, by properly modifying $\phi$ on $\{(x,t): |x-x_0|+|t-t_0|>4r\}$ we may assume that $\phi$ is bounded on $\rr^n\times [0,T]$.

If $(x_0,t_0)$ is in the interior of $\{W=1\}$ or $\{W=-1\}$, then we have $\phi_x(x_0,t_0)=0$ and $\phi_t(x_0,t_0)=0$, so that the previous line holds. Thus, from now on we assume $(x_0,t_0)\in \bdry(\{W=1\}\cup\{W=-1\})$. In particular, 
\begin{equation}
\label{x0ninOmegat}
x_0\notin \Omega_{t_0}^{int}.
\end{equation}
In addition, since  $W$ is lower-semicontinuous, we have 
\begin{equation}
\label{Wx0t0}
W(x_0,t_0)=-1.
\end{equation}

 We want to show
\[
\phi_t(x_0,t_0)+G(x_0,t_0,D\phi(x_0,t_0))\geq 0.
\]
We proceed by contradiction and assume that the previous line does not hold, so that there exists $a>0$ with 
\begin{equation}
\label{contradictAtx0t0}
\phi_t(x_0,t_0)+G(x_0,t_0,D\phi(x_0,t_0))< -a.
\end{equation}
Due to the continuity of $\phi_t$ and $G$, we have that, for $h$ small enough,
\[
\int_{t_0-h}^{t_0} \phi_t(x_0,s)\, ds + \int_{t_0-h}^{t_0}G(x_0, s,D\phi(x_0, s))\, ds <-a/2.
\]
Since we have assumed $\phi(x_0,t_0)=\int_0^{t_0} b(s)\, ds$, the previous line becomes,
 \begin{equation}
 \label{consequence of cont}
\int_0^{t_0} b(s)\, ds -\phi(x_0, t_0-h) + \int_{t_0-h}^{t_0}G(x_0, s,D\phi(x_0, s))\, ds <-a/2 .
\end{equation}
By a similar argument we find that (\ref{contradictAtx0t0}) implies, for $h$ small enough, 
 \begin{equation}
 \label{consequence of cont2}
\int_0^{t_0} b(s)\, ds -\phi(x_0, t_0-h) + \int_{t_0-h}^{t_0}G(x_0, s,D\phi(x_0, t_0))\, ds <-a/2 .
\end{equation}

\textbf{Case one.} Let us first suppose $D\phi(x_0,t_0)\neq 0$.  We define $\phi_k$ by,
\[
\phi_k(x,t)=\phi(x,t)-\int_0^tb(s)\, ds-k|x-x_0|^2.
\]
Then, according to Lemma \ref{analogylem21BS}, we have that there exist $K>0$ and $\bar{h}>0$ such that, for all $h\in (0,\bar{h})$, 
\begin{equation}
\label{use pos set phi}
\{x| \phi_K(x,t_0-h)\geq 0\} \subset  B_r(x_0),
\end{equation}
and
\begin{equation}
|D\phi_K(x,t_0-h)|\neq 0 \text{ on }\{x: \phi_K(x,t_0-h)=0\}.
\end{equation}
There exists $C$ that does not depend on $h$ with $||\phi_K(\cdot, t_0-h)||_{C^3(B_r(x_0))}\leq C$ for all $h\in (0,\bar{h})$. In addition, $(x_0, t_0)$ is a strict minimum of 
\[
(x,t)\mapsto W(x,t)-\phi_K(x,t)
\]
on $\{(x,t): |x-x_0|+|t-t_0|>4r\}$. Because $(x_0,t_0)$ is a strict minimum, we have, for any $x$, $t$ with $0<|x-x_0|+|t-t_0|<2r$,
\[
W(x,t)-\phi_K(x,t) > W(x_0,t_0)-\phi_K(x_0,t_0)=-1,
\]
where the equality follows from (\ref{phibx0t0}) and (\ref{Wx0t0}). Upon rearranging, the previous line becomes,
\[
W(x,t)> -1 +\phi_K(x,t).
\]
Let us now suppose $(x,t_0-h)$ is such that $x\in B_r(x_0)$, $0<h<r$, and $\phi_K(x,t_0-h)\geq 0$. Then, by the previous line, we have
\[
W(x,t_0-h)>-1.
\]
But, since $W$ takes only the values $1$ and $-1$, this means $W(x,t_0-h)=1$. Recalling the definition of $W$ yields $x\in \Omega_{t_0+h}^{int}$. Therefore, we've shown,
\[
\{x: \phi_K(x,t_0-h)\geq 0\}\cap B_r(x_0)\subset \Omega_{t_0-h}^{int}.
\]
But, according to (\ref{use pos set phi}), we have $\{x: \phi_K(x,t_0-h)\geq 0\}\subset B_r(x_0)$. Therefore we have, for any $h\in (0,\bar{h})$,
\[
\{x: \phi_K(x,t_0-h)\geq 0\}\subset \Omega_{t_0-h}^{int}\cap B_r(x_0).
\] 
Since $\Omega_t^{int}$ is a superflow with velocity $-H$, and since the $C^3$ norms of $\phi_K(\cdot, t_0-h)$ on $B_r(x_0)$ are uniformly bounded in $h$, we have that for $\alpha$ small enough, for any $h\in (0,\bar{h})$, and for any $\tilde{h}\in (0,h_0)$,
\[
\{x: \phi_K(x,t_0-h)-\int_{t_0-h}^{t_0-h+\tilde{h}} H(x,s,D\phi_K(x,t_0-h))\, ds-\tilde{h}\alpha \geq 0\}\cap B_r(x_0)\subset \Omega_{t_0-h+\tilde{h}}^{int}.
\] 
(Here we are thinking  of $x\mapsto \phi_k(x,t_0-h)$ as the test function, and of $t_0-h$ as the initial time.) 
Let us  now take any $h< \min\{\bar{h}, h_0\}$ and take $\tilde{h}=h$. Then, 
\[
\{x: \phi_K(x,t_0-h)-\int_{t_0-h}^{t_0} H(x,s,D\phi_K(x,t_0-h))\, ds-h\alpha \geq 0\}\cap B_r(x_0)\subset \Omega_{t_0}^{int}.
\]
By our earlier observation (\ref{x0ninOmegat}), we have $x_0\notin \Omega_{t_0}^{int}$, and hence $x_0$ is not contained in the set on the left-hand side of the previous line. Therefore,
\[
\phi_K(x_0,t_0-h)-\int_{t_0-h}^{t_0} H(x,s,D\phi_K(x_0,t_0-h))\, ds-h\alpha \leq 0.
\]
Recalling the definition of $\phi_K$ gives,
\begin{equation*}
\phi(x_0,t_0-h)-\int_0^{t_0-h}b(s)\, ds -\int_{t_0-h}^{t_0} H(x,s,D\phi_K(x_0,t_0-h))\, ds-h\alpha \leq 0.
\end{equation*}
Adding (\ref{consequence of cont}) to the previous line yields,
\[
\int_{t_0-h}^{t_0} b(s)+G(x_0, s,D\phi(x_0, s))-H(x,s,D\phi_K(x_0,t_0-h))\, ds -h\alpha <-a/2.
\]
However, the fact that this holds for all $h$ small enough contradicts that $(b,G)\in H^+(x_0, t_0, D\phi(x_0, t_0))$, completing the proof.

\textbf{Case two.} Now we consider the case  $D\phi(x_0,t_0)= 0$. 
That $(x_0,t_0)$ is a strict local minimum of $W(x,t)-\phi(x,t)+\int_0^tb(s)\, ds$ means, for $(x,t)$ near $(x_0,t_0)$,
\[
W(x,t)-\phi(x,t)+\int_0^tb(s)\, ds> W(x_0,t_0)-\phi(x_0,t_0)+\int_0^{t_0}b(s).
\]
Using (\ref{phibx0t0}) and (\ref{Wx0t0}), and rearranging, implies,
\begin{equation}
\label{W greater -1 phi}
W(x,t)> -1 + \phi(x,t)-\int_0^tb(s)\, ds
\end{equation}
for $(x,t)$ near $(x_0,t_0)$. 

Writing  the Taylor expansion of $\phi(x,t)$ at $(x_0, t_0)$  in $x$ (recalling that $D\phi(x_0,t_0)=0$)  
 yields, for $(x,t)$ near $(x_0, t_0)$ and a constant $C$ depending on $||\phi||_{C^2}$,
\begin{align*}
\phi(x,t) \geq \phi(x_0, t) - C|x-x_0|^2.
\end{align*}  
Taking $t=t_0-h$ and using the previous line to bound the second term on the right-hand side of (\ref{W greater -1 phi}) from below yields,
\[
 -1 +\phi(x_0,t_0-h)- C|x-x_0|^2-\int_{0}^{t_0-h}b(s)\, ds<W(x,t_0-h).
\]
Adding (\ref{consequence of cont2}) to the  previous line yields,
\[
-1+\int_{t_0-h}^{t_0} b(s)\, ds + \int_{t_0-h}^{t_0}G(x_0, s,D\phi(x_0, s))\, ds - C|x-x_0|^2 <-a/2 +W(x,t_0-h) .
\]
Upon rearranging we find,
\begin{equation}
\label{intbG}
 \int_{t_0-h}^{t_0}b(s)+G(x_0, s,D\phi(x_0, t_0))\, ds <-a/2 +W(x,t_0-h)+1+C|x-x_0|^2.
\end{equation}
Since $D\phi(x_0,t_0)=0$ and $H(x,t,p)=V(x,t)\cdot p$, we find $H(x_0,t_0, D\phi(x_0,t_0))=0$. The fact that $(b,G)\in H^+((x_0,t_0, D\phi(x_0,t_0))$  implies, for  $s$ near $t_0$,
\[
b(s)+G(x_0, s,D\phi(x_0, t_0))\geq H(x_0, t_0, D\phi(x_0, t_0))=0.
\]
This means that for $h$ small enough, the left-hand side of (\ref{intbG}) is nonnegative. Thus (\ref{intbG}) becomes,
\[
-a/2 +W(x,t_0-h)+1+C|x-x_0|^2>0.
\]

\textbf{Subcase (a).} Suppose there exists a sequence $(x_n,h_n)\rightarrow (x_0,0)$ with $W(x_n, t-h_n)=-1$. Then evaluating the previous line at $(x_n, t-h_n)$ and taking the limit $n\rightarrow \infty$ yields,
\[
-a/2\geq 0,
\]
which is the desired contradiction.

\textbf{Subcase (b).} Thus it is left to consider the case that there exists $r_0>0$  such that for all $h\in (0, h_0)$ and all $x\in B_{r_0}(x_0)$, we have $W(x,t)=1$. For any $h\in (0,r_0^2)$, we define the test function $\psi$ by,
\[
\psi(x)=h-|x-x_0|^2.
\]
If $x$ is such that $\psi(x)\geq 0$, then we have 
\[
|x-x_0|^2\leq h\leq r_0^2,
\]
hence $x\in B_{r_0}(x_0)$ and $W(x,t_0-h)=1$. This means,
\[
\{x|\psi(x)\geq 0\}\subset \Omega_{t_0+h}^{int}\cap B_{r_0}(x_0).
\]
We now use the fact that $\Omega$ is a superflow (we are thinking of $\psi$ as the test function and $t_0-h$ as the initial time, and take $\alpha=1/2$). We find that there exists $h_0$ (that does not depend on $h$) such that, for all $\tilde{h}\in (0,h_0)$, we have,
\[
\{x|\psi(x)-\int_{t_0-h}^{t_0-h+\tilde{h}}H(x,s,D\psi(x))\, ds - h/2>0\}\cap B_{r_0}(x_0)\subset \Omega_{t_0-h+\tilde{h}}^{int}.
\]
Let us now take $h<\min\{h_0, r_0\}$  and $\tilde{h}=h$. Then the previous line becomes,
\[
\{x|\psi(x)-\int_{t_0-h}^{t_0}H(x,s,D\psi(x)\, ds - h/2>0\}\cap B_{r_0}(x_0)\subset \Omega_{t_0}^{int}.
\]
According to (\ref{x0ninOmegat}), we have $x_0\notin \Omega_{t_0}^{int}$, and hence $x_0$ is not contained in the set on the left-hand side of the previous line. This means,
\[
\psi(x_0)-\int_{t_0-h}^{t_0}H(x,s,D\psi(x_0))\, ds - h/2\leq 0.
\]
The definition of $\psi$ yields, $\psi(x_0)=h$ and $D\psi(x_0)=0$. Since we have $H(x,t,p)=V(x,t)\cdot p$, the previous line becomes,
\[
h-h/2\leq 0,
\]
which yields the desired contradiction and completes the proof.
\end{proof}

\section*{Acknowledgements} 
The authors are very happy to acknowledge Beno\^it Perthame, who brought this problem to their attention.

\end{document}